\numberwithin{equation}{section}
\theoremstyle{plain}
\newtheorem{theorem}{Theorem}[section]
\newtheorem{corollary}[theorem]{Corollary}
\newtheorem{proposition}[theorem]{Proposition}
\theoremstyle{definition}
\newtheorem{Def}[theorem]{Definition}
\newtheorem{example}[theorem]{Example}
\newtheorem{remark}[theorem]{Remark}
\newtheorem{?}[theorem]{Problem}
\newcommand{\N}{\mathbb{N}}
\newcommand{\R}{\mathbb{R}}
\def\R{\mathbb{R}}
\def\odd{\mathrm{odd}}
\newcommand\qbin[3]{\left[\begin{matrix} #1 \\ #2 \end{matrix} \right]_{#3}}
\def\boxit#1{\leavevmode\hbox{\vrule\vtop{\vbox{\kern.33333pt\hrule
    \kern1pt\hbox{\kern1pt\vbox{#1}\kern1pt}}\kern1pt\hrule}\vrule}}
\begin{document}
\title[unified approach]
{A unifying combinatorial approach to refined little G\"{o}llnitz and Capparelli's
companion  identities}
\author[S. Fu]{Shishuo Fu}
\address[Shishuo Fu]{College of Mathematics and Statistics, Chongqing University, Huxi campus building LD506, Chongqing 401331, P.R. China} 
\email{fsshuo@cqu.edu.cn}

\author[J. Zeng]{Jiang Zeng}

\address[Jiang Zeng]{Univ Lyon, Universit\'e Claude Bernard Lyon 1, CNRS UMR 5208, Institut Camille Jordan, 43 blvd. du 11 novembre 1918, F-69622 Villeurbanne cedex, France}
\email{zeng@math.univ-lyon1.fr}
  
\date{\today}

\begin{abstract}
Berkovich-Uncu have recently proved a
companion  of the well-known Capparelli's identities as well as 
refinements of Savage-Sills' new little G\"{o}llnitz identities. Noticing the connection between their results  and  
Boulet's earlier four-parameter partition generating functions, we discover
a new class of partitions, called $k$-strict partitions, to generalize their results.
By applying both horizontal and vertical dissections of Ferrers' diagrams with appropriate labellings, we provide  a unified combinatorial treatment 
of their results and shed more lights on the intriguing conditions of their companion to Capparelli's identities.

%

\end{abstract}
\keywords{Andrews-Boulet-Stanley partition function; $k$-strict partition;
little G\"{o}llnitz identities; Capparelli identities}

\maketitle

\tableofcontents

\section{Introduction}
 In 2006, following Andrews~\cite{and4} and  Stanley~\cite{st2}, Boulet \cite{bou} considered a four-variable generalization of Euler's  generating function  for the partition function by filling the cells of each odd-indexed (even-indexed) row of 
the diagram of a partition by $a$ and $b$ (resp. $c$ and $d$) cyclically 
and established the following:
\begin{align}\label{boulet}
\Phi(a,b,c,d) &:=\sum\limits_{\pi\in\mathcal{P}} \omega^2_{\pi}(a,b,c,d):=\frac{(-a,-abc;Q)_{\infty}}{(Q,ab,ac;Q)_{\infty}}, \quad Q:=abcd,
\end{align} 
where the weight $\omega^2_{\pi}(a,b,c,d):=a^{\# a}b^{\# b}c^{\# c}d^{\# d}$ is the product  of all the labels in the fillings of $\pi$'s diagram 
  as shown in the first diagram of Figure~\ref{2weights}, with 
$\# a$ denoting the number of cells labelled as $a$.  Throughout this paper we use $\mathcal{P}$ (resp. $\Phi$) to denote the set (resp. generating function) of ordinary partitions and adopt the standard $q$-notations~\cite{gara}:
\begin{align*}
(a;q)_0  :=1,\quad
(a;q)_k & := \prod_{i=1}^{k}(1-aq^{i-1}),\quad k\in \N^*\cup{\{\infty\}}\\
(a_1,a_2,\ldots,a_m;q)_s & := (a_1;q)_s(a_2;q)_s\ldots(a_m;q)_s.
\end{align*}

\begin{figure}
\begin{tikzpicture}[scale=0.5]
\draw (0,3) grid (10,5);
\draw (0,2) grid (7,3);
\draw (0,1) grid (5,2);
\draw (0,0) grid (2,1);
\draw (12,3) grid (22,5);
\draw (12,2) grid (19,3);
\draw (12,1) grid (17,2);
\draw (12,0) grid (14,1);
\draw (0.5,4.5) node{$a$};
\draw (1.5,4.5) node{$b$};
\draw (2.5,4.5) node{$a$};
\draw (3.5,4.5) node{$b$};
\draw (4.5,4.5) node{$a$};
\draw (5.5,4.5) node{$b$};
\draw (6.5,4.5) node{$a$};
\draw (7.5,4.5) node{$b$};
\draw (8.5,4.5) node{$a$};
\draw (9.5,4.5) node{$b$};
\draw (0.5,2.5) node{$a$};
\draw (1.5,2.5) node{$b$};
\draw (2.5,2.5) node{$a$};
\draw (3.5,2.5) node{$b$};
\draw (4.5,2.5) node{$a$};
\draw (5.5,2.5) node{$b$};
\draw (6.5,2.5) node{$a$};
\draw (0.5,0.5) node{$a$};
\draw (1.5,0.5) node{$b$};
\draw (0.5,3.5) node{$c$};
\draw (1.5,3.5) node{$d$};
\draw (2.5,3.5) node{$c$};
\draw (3.5,3.5) node{$d$};
\draw (4.5,3.5) node{$c$};
\draw (5.5,3.5) node{$d$};
\draw (6.5,3.5) node{$c$};
\draw (7.5,3.5) node{$d$};
\draw (8.5,3.5) node{$c$};
\draw (9.5,3.5) node{$d$};
\draw (0.5,1.5) node{$c$};
\draw (1.5,1.5) node{$d$};
\draw (2.5,1.5) node{$c$};
\draw (3.5,1.5) node{$d$};
\draw (4.5,1.5) node{$c$};

\draw (12.5,4.5) node{$a$};
\draw (13.5,4.5) node{$b$};
\draw (14.5,4.5) node{$c$};
\draw (15.5,4.5) node{$a$};
\draw (16.5,4.5) node{$b$};
\draw (17.5,4.5) node{$c$};
\draw (18.5,4.5) node{$a$};
\draw (19.5,4.5) node{$b$};
\draw (20.5,4.5) node{$c$};
\draw (21.5,4.5) node{$a$};
\draw (12.5,2.5) node{$a$};
\draw (13.5,2.5) node{$b$};
\draw (14.5,2.5) node{$c$};
\draw (15.5,2.5) node{$a$};
\draw (16.5,2.5) node{$b$};
\draw (17.5,2.5) node{$c$};
\draw (18.5,2.5) node{$a$};
\draw (12.5,0.5) node{$a$};
\draw (13.5,0.5) node{$b$};
\draw (12.5,3.5) node{$d$};
\draw (13.5,3.5) node{$e$};
\draw (14.5,3.5) node{$f$};
\draw (15.5,3.5) node{$d$};
\draw (16.5,3.5) node{$e$};
\draw (17.5,3.5) node{$f$};
\draw (18.5,3.5) node{$d$};
\draw (19.5,3.5) node{$e$};
\draw (20.5,3.5) node{$f$};
\draw (21.5,3.5) node{$d$};
\draw (12.5,1.5) node{$d$};
\draw (13.5,1.5) node{$e$};
\draw (14.5,1.5) node{$f$};
\draw (15.5,1.5) node{$d$};
\draw (16.5,1.5) node{$e$};
\draw (5,-1) node{$\omega^2_{\pi}(a,b,c,d)=a^{10}b^{9}c^{8}d^{7}$};
\draw (17,-1) node{$\omega^3_{\pi}(a,b,c,d,e,f)=a^{8}b^{6}c^{5}d^{6}e^{5}f^{4}$};

\end{tikzpicture}
\caption{Partition $\pi=(10,10,7,5,2)$ with weights $\omega^2_{\pi}$ and $\omega^3_{\pi}$}
\label{2weights}
\end{figure}

 Boulet~\cite{bou}  also obtained the strict version of \eqref{boulet}:
\begin{align}\label{boulet-strict}
\Psi(a,b,c,d) &:=\sum_{\pi\in \mathcal{D}} \omega^2_{\pi}(a,b,c,d)=\frac{(-a,-abc;Q)_{\infty}}{(ab;Q)_{\infty}}, \quad Q:=abcd,
\end{align}
where we use $\mathcal{D}$ (resp. $\Psi$)
 to denote the set (resp.  generating function) of strict partitions.

If $\pi=(\pi_1, \pi_2, \ldots)$ is a partition, we denote by $|\pi|$ the sum of its parts and 
by ${\odd}(\pi)$  the number of its odd  parts,
$\pi_o$ (resp. $\pi_o$) the partition consisting of the odd-indexed (resp. even-indexed) parts of $\pi$.  Now, with the substitution $(a,b,c,d)=(xt, x/t, yz, y/z)$ in (\ref{boulet-strict}) we have 
\begin{align}\label{boulet-strict-ss-bu}
\sum_{\pi\in \mathcal{D}} x^{|\pi_\textrm{o}|}y^{|\pi_\textrm{e}|}t^{{\odd}(\pi_\textrm{o})}z^{{\odd}(\pi_\textrm{e})}=\frac{(-xt,-x^2yz;x^2y^2)_{\infty}}{(x^2;x^2y^2)_{\infty}}.
\end{align}
We would like to point out  that  the above identity encompasses 
several  results in the recent literature as special cases.
For example,  the two special ($z=0$ or $t=0$) 
cases of  \eqref{boulet-strict-ss-bu} 
correspond to 
 Theorem~4.3 and Theorem~4.4 of \cite{ss}, respectively, which
  imply in particular their  new little G\"ollnitz identities.
  \begin{theorem}[Savage-Sills]
 The number of partitions of $n$ into distinct parts in which even-indexed $($resp. odd-indexed$)$ parts are even is equal to the number of partitions of $n$ into parts  $\equiv 1,5,6\pmod{8}$ $($resp. $2,3,7\pmod{8}$$)$.
\end{theorem}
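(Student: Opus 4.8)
The plan is to obtain both halves of the theorem as specializations of the four‑variable evaluation \eqref{boulet-strict-ss-bu}, after which a short manipulation of infinite products identifies the answer as a product over residue classes modulo $8$. Observe that on the left of \eqref{boulet-strict-ss-bu} the exponents $|\pi_o|,|\pi_e|,\odd(\pi_o),\odd(\pi_e)$ are nonnegative integers and that for each multidegree only finitely many $\pi$ contribute; thus \eqref{boulet-strict-ss-bu} is an identity in $\mathbb{Z}[[x,y,t,z]]$, and the substitutions below — setting a variable to $0$ or $1$, and $x=y=q$ — are all legitimate.

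\emph{Even-indexed parts even.} A strict partition $\pi=(\pi_1>\pi_2>\cdots)$ has all of $\pi_2,\pi_4,\ldots$ even precisely when $\odd(\pi_e)=0$. Setting $z=0$ in \eqref{boulet-strict-ss-bu} discards exactly the partitions that violate this, while on the right the factor $(-x^2yz;x^2y^2)_\infty$ becomes $1$. Now put $x=y=q$, so that the left side becomes $\sum q^{|\pi|}$ over the admissible $\pi$ (still weighted by $t^{\odd(\pi_o)}$), and put $t=1$ since no further refinement is needed. The right side is then
\[
\frac{(-q;q^4)_\infty}{(q^2;q^4)_\infty}.
\]
Using $(-q^a;q^b)_\infty=(q^{2a};q^{2b})_\infty/(q^a;q^b)_\infty$ together with the splittings $(q;q^4)_\infty=(q;q^8)_\infty(q^5;q^8)_\infty$ and $(q^2;q^4)_\infty=(q^2;q^8)_\infty(q^6;q^8)_\infty$, this reduces to $1/\big((q;q^8)_\infty(q^5;q^8)_\infty(q^6;q^8)_\infty\big)$, which is the generating function for partitions into parts $\equiv 1,5,6\pmod 8$. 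Comparing coefficients of $q^n$ yields the first assertion.

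\emph{Odd-indexed parts even.} Symmetrically, $\pi$ has all of $\pi_1,\pi_3,\ldots$ even precisely when $\odd(\pi_o)=0$, so we set $t=0$, which makes $(-xt;x^2y^2)_\infty$ equal to $1$. Putting $x=y=q$ and $z=1$ (note that $x^2yz=q^3$) turns the right side into $(-q^3;q^4)_\infty/(q^2;q^4)_\infty$, and the same computation, now using $(q^3;q^4)_\infty=(q^3;q^8)_\infty(q^7;q^8)_\infty$, gives $1/\big((q^2;q^8)_\infty(q^3;q^8)_\infty(q^7;q^8)_\infty\big)$, the generating function for partitions into parts $\equiv 2,3,7\pmod 8$. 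This yields the second assertion.

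With \eqref{boulet-strict-ss-bu} in hand there is no genuine obstacle: the proof consists of two specializations followed by a routine product identity. The only points requiring care are (i) confirming that the substitutions $z=0$ and $t=0$ act on a bona fide formal power series (addressed above), and (ii) the residue‑class bookkeeping in the final rewriting, where it is easy to slip on which classes modulo $8$ survive — checking the first few coefficients of $q^n$ against direct enumeration settles this at once.
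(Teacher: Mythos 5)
Your proposal is correct and follows essentially the same route as the paper, which obtains the theorem precisely by taking the $z=0$ (resp.\ $t=0$) specializations of \eqref{boulet-strict-ss-bu} and then setting $x=y=q$; you merely carry out explicitly the routine rewriting of $(-q;q^4)_\infty/(q^2;q^4)_\infty$ and $(-q^3;q^4)_\infty/(q^2;q^4)_\infty$ as products over residue classes modulo $8$, and that computation checks out.
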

 
 In view of 
Euler's formula $(-q;q)_\infty=1/(q;q^2)_\infty$, 
the  $x=y=q$ case of  \eqref{boulet-strict-ss-bu}  reduces to 
\begin{align}\label{boulet-stric-bu}
\sum_{\pi\in \mathcal{D}} q^{|\pi|}t^{{\odd}(\pi_\textrm{o})}z^{{\odd}(\pi_\textrm{e})}=(-qt,-q^3z, -q^2, -q^4;q^4)_{\infty}.
\end{align}
This is  equivalent to Berkovich and Uncu's Theorem 1.1 in \cite{bu2}, while 
the special $z=1$ ($t=1$) case of 
\eqref{boulet-stric-bu} corresponds to  Theorem 2.4 of \cite{bu2}. 
\begin{theorem}[Berkovich and Uncu]\label{bu-2}
The number of partitions of $n$ into distinct parts with $i$ odd-indexed odd parts and $j$
even-indexed odd parts is equal to 
 the number of partitions of $n$ into distinct parts with $i$ parts
that are congruent to 1 modulo 4, and $j$ parts that are congruent to 3 modulo 4.
\end{theorem}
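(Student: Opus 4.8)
The plan is to read Theorem~\ref{bu-2} off the refined generating-function identity~\eqref{boulet-stric-bu}, which the excerpt has already deduced from Boulet's strict formula~\eqref{boulet-strict} via the substitution $(a,b,c,d)=(xt,x/t,yz,y/z)$, then $x=y=q$, together with Euler's identity $(-q;q)_\infty=1/(q;q^2)_\infty$. Before invoking it I would re-examine that derivation once, because the one genuinely delicate point of the whole argument lives there: an odd-indexed row of length $\ell$ is labelled $a,b,a,b,\dots$, hence contributes $(xt)^{\lceil\ell/2\rceil}(x/t)^{\lfloor\ell/2\rfloor}=x^{\ell}t^{\lceil\ell/2\rceil-\lfloor\ell/2\rfloor}$ to $\omega^2_\pi$, and the $t$-exponent is $1$ exactly when $\ell$ is odd, while an even-indexed row of length $\ell$ contributes $y^{\ell}$ times $z$ or $1$ according as $\ell$ is odd or even. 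Summing over all rows turns the Boulet weight into $x^{|\pi_\textrm{o}|}y^{|\pi_\textrm{e}|}t^{\odd(\pi_\textrm{o})}z^{\odd(\pi_\textrm{e})}$, which is~\eqref{boulet-strict-ss-bu}; specialising $x=y=q$ makes $Q=q^4$, $ab=q^2$, $abc=q^3z$, and applying Euler with $q\mapsto q^2$ gives $1/(q^2;q^4)_\infty=(-q^2;q^2)_\infty$, which splits according to residues modulo $4$ as $(-q^2;q^4)_\infty(-q^4;q^4)_\infty$. This yields $\sum_{\pi\in\mathcal{D}}q^{|\pi|}t^{\odd(\pi_\textrm{o})}z^{\odd(\pi_\textrm{e})}=(-qt,-q^3z,-q^2,-q^4;q^4)_\infty$, that is,~\eqref{boulet-stric-bu}.

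With this identity at hand I would expand both sides as formal power series in $q$, using $t$ and $z$ as bookkeeping variables, and compare coefficients. On the left, by the very definition of the statistics, the coefficient of $q^n t^i z^j$ equals the number of strict partitions of $n$ with $i$ odd-indexed odd parts and $j$ even-indexed odd parts. On the right, the four factors $(-qt;q^4)_\infty$, $(-q^3z;q^4)_\infty$, $(-q^2;q^4)_\infty$, $(-q^4;q^4)_\infty$ are the generating functions for finite sets of distinct parts drawn respectively from the residue classes $1,3,2,0$ modulo $4$, with $t$ counting the chosen parts $\equiv1\pmod4$ and $z$ the chosen parts $\equiv3\pmod4$; since those four classes partition the positive integers, the product is the generating function for all partitions into distinct parts refined by the number of parts $\equiv1$ and the number of parts $\equiv3$ modulo $4$. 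Equating the coefficient of $q^n t^i z^j$ on the two sides gives precisely the equality of counting functions claimed in Theorem~\ref{bu-2}.

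Along this route there is no analytic obstacle: apart from the parity bookkeeping flagged above, everything reduces to formal manipulation of infinite $q$-products. The substantive task --- and the one the rest of this paper pursues --- is to upgrade this to an explicit size- and statistic-preserving bijection between the two families of distinct partitions; the natural way to reach it is to prove Boulet's $\Psi$-formula itself combinatorially through a sequential hook-removal procedure, i.e.\ by horizontal and vertical dissections of Ferrers diagrams with the appropriate labellings, and this is where the main effort goes, carried out here in the broader framework of $k$-strict partitions.
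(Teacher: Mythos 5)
Your proposal is correct and follows essentially the same route as the paper: Theorem~\ref{bu-2} is read off as the coefficient of $q^nt^iz^j$ in identity~\eqref{boulet-stric-bu}, which is obtained from Boulet's strict formula~\eqref{boulet-strict} by the substitution $(a,b,c,d)=(xt,x/t,yz,y/z)$, the specialization $x=y=q$, and Euler's identity. Your verification of the parity bookkeeping in the weight and the residue-class interpretation of the four $q^4$-products matches the paper's (largely implicit) argument.
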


Actually they proved a finite version~\cite[Theorem~4.1]{bu2}  of the above result 
using recurrence and a special case of a finite version of 
\eqref{boulet-strict} due to Ishikawa and the second author~\cite[Corollary 3.4]{iz}.
  One of our  aims is to give a combinatorial proof of their finite version
 using a variant of Boulet's bijection (see Section~\ref{BC1}).

Another impetus of this work is the connection of \eqref{boulet-strict-ss-bu} with Capparelli's identities \cite{cappa}.
In 1988, Capparelli conjectured in his thesis \cite{cappath} two Rogers-Ramanujan type identities, which are described by Alladi et al. \cite{aag} as ``new and quite subtle''. Andrews proved the first identity in 1994 \cite{and3} via generating function manipulation, with Lie-theoretic proofs supplied later by Tamba and Xie \cite{tx} and by Capparelli himself \cite{cappa}. Finally in 1995, both identities were proven by Alladi, Andrews and Gordon \cite{aag}. For recent study on Capparelli's identities, see for example \cite{sil1,dou,bm,dl}.
In Capparelli's original identities there are the 
 infinite product sides or the modular sides and the ``gap condition'' sides.
 Berkovich and Uncu \cite{bu1}  defined  a new ``gap condition''. 
For completeness we first quote their definition and one of their results below.
\begin{Def}
\label{level3}
For $m\in\{1,2\}$, let $A_m(n)$ be the number of partitions $\pi=(\pi_1,\pi_2,\ldots)$ of $n$ such that
\begin{enumerate}[i.]
\item $\pi_{2j+r}\not\equiv 3-m+(-1)^mr \pmod 3$, 
\item $\pi_{2j+r}-\pi_{2j+r+1}>\lfloor m/2\rfloor+(-1)^{m-1}r$ for $1\leq 2j+r$,
\end{enumerate}
where $r\in \{0,1\}$ and $j\in \N$; and
let $C_m(n)$ be the number of partitions of $n$ into distinct parts $\not\equiv \pm m\pmod{6}$.
\end{Def}
As remarked by Berkovich and Uncu \cite{bu1} the second condition of $A_m(n)$  can be replaced with the condition that 
all parts are distinct and $3l+1$ and $3l+2$ do not appear together as consecutive parts for any integer $l\geq 0$.
\begin{theorem}[Berkovich-Uncu]\label{bu-A=C}
For $m\in \{1,2\}$ and positive integers $n$, we have 
\begin{align}\label{bunewcomp}
A_m(n)=C_m(n).
\end{align}
\end{theorem}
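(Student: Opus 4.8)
The plan is to derive Theorem~\ref{bu-A=C} as a specialization of the strict Boulet generating function \eqref{boulet-strict}, paralleling the way \eqref{boulet-stric-bu} was extracted, but now using a $3$-periodic labelling instead of a $2$-periodic one. Concretely, I would first observe that the partitions counted by $A_m(n)$ are governed by a ``block'' structure: pairing the parts as $(\pi_1,\pi_2),(\pi_3,\pi_4),\dots$, condition (ii) says the two members of a block are not a consecutive pair $\{3l+1,3l+2\}$ and all parts are distinct, while condition (i) imposes residue restrictions that alternate between odd-indexed and even-indexed rows. This two-row periodicity together with a three-column periodicity is exactly the combinatorial content needed to introduce a weight $\omega^3_\pi$ on Ferrers diagrams filled $3$-periodically in rows (the second diagram of Figure~\ref{2weights}), so the first real step is to set up this refined weight and a variant of Boulet's bijection that tracks, for each part, both its row parity class and its residue modulo $3$.

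Second, I would establish the generating-function identity that plays the role of \eqref{boulet-stric-bu} in this setting: a six-variable (or suitably specialized) product formula of the shape
\begin{align*}
\sum_{\pi\in\mathcal{D}} \omega^3_\pi(a,b,c,d,e,f)\, q^{|\pi|} = (\text{infinite product in }q),
\end{align*}
obtained by the same horizontal/vertical dissection of the Ferrers diagram advertised in the abstract. The key computation is to choose the substitution of $(a,b,c,d)$ — or of the six variables in the $3$-periodic version — so that the left side becomes the generating function $\sum_n A_m(n) q^n$. Here the parameter $m\in\{1,2\}$ selects which residue is forbidden, i.e.\ which of $\pm m \pmod 6$ gets killed; condition (i) with its $(-1)^m$ and $(-1)^{m-1}$ signs is precisely what makes the forbidden residue swap between even- and odd-indexed rows, and that asymmetry is what the two-row labelling captures. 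After the substitution, Euler's identity $(-q;q)_\infty = 1/(q;q^2)_\infty$ and routine manipulation of the $Q$-shifted factorials should collapse the product to $(q^{a_1},q^{a_2},\dots;q^6)_\infty^{-1}$-type form matching exactly the generating function $\sum_n C_m(n) q^n = (-q;q)_\infty / (q^m, q^{6-m}; q^6)_\infty$ for distinct parts avoiding $\pm m \pmod 6$.

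The third step is bookkeeping: verify that condition (ii) — the gap/non-adjacency condition, in its restated form that $3l+1$ and $3l+2$ never occur together as consecutive parts — corresponds under the bijection exactly to the constraint that the $3$-periodic column labels impose on a strict partition, and that nothing is over- or under-counted at the boundary cases $1\le 2j+r$ (the smallest parts). This is where I expect the main obstacle to lie: the little off-by-one conditions in Definition~\ref{level3} (the $\lfloor m/2\rfloor$ term, the indexing starting at $2j+r\ge 1$, and the behaviour of the last, possibly unpaired, part) have to be matched against the diagram dissection with complete care, since a single misaligned row changes which residues are allowed and breaks the product formula. I would handle this by treating the two cases $m=1$ and $m=2$ separately and, within each, checking the parity of the number of parts, much as Berkovich and Uncu's own recurrence proof splits into subcases.

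Finally, having identified both sides as the same $q$-series, the equality $A_m(n)=C_m(n)$ for all $n$ follows by comparing coefficients of $q^n$. An alternative, purely bijective route — which the unified framework also supports — would be to compose the variant of Boulet's bijection on the $A_m$-side with its inverse on the $C_m$-side to get an explicit weight-preserving map; I would mention this but rely on the generating-function argument as the primary proof, since it makes the appearance of the ``intriguing conditions'' of Definition~\ref{level3} transparent as the shadow of a $2$-by-$3$ periodic tiling of the Ferrers diagram.
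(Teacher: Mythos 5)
Your overall strategy --- a $3$-periodic row labelling $\omega^3$, a dissection of the Ferrers diagram into width-$3$ vertical blocks, a product formula, and then a specialization compared coefficientwise against the product for $C_m(n)$ --- is exactly the route the paper takes (Theorem~\ref{main1} specialized through \eqref{gfBid} and \eqref{gfBevaid} to Theorem~\ref{bu-k} with $k=3$). However, there is one genuine gap in your plan as written: your central identity is a sum over all of $\mathcal{D}$, and no such product formula holds there. The set on which the vertical dissection produces only the four block types of Figure~\ref{sixtypes} is the set of \emph{$3$-strict} partitions, i.e.\ those avoiding $\{3l+1,3l+2\}$ as simultaneous parts; for a general strict partition such as $(5,4)$ the second width-$3$ block has a row of length $2$ sitting on a row of length $1$, which is none of the four types, and the factorization into $(-x_3;w_3)_\infty(-y_3;w_3)_\infty/(z_3,w_3;w_3)_\infty$ breaks down. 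Relatedly, the gap condition (ii) of Definition~\ref{level3} is not a consequence of the residue conditions (i): $(5,4)$ satisfies the residue constraints for $m=1$ but is excluded by (ii), so a generating function over residue-restricted strict partitions would overcount $A_1(n)$. The correct domain is $\mathcal{DS}^3=\mathcal{D}\cap\mathcal{S}^3$, and recognizing that condition (ii) is precisely ``distinct and $3$-strict'' must come \emph{first}, as the hypothesis enabling the dissection --- not afterwards as bookkeeping.

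A second, smaller point: even once you restrict to $\mathcal{DS}^3$, dissecting it directly is awkward, because distinctness of parts interacts messily with the multiplicities of type III/IV blocks. The paper instead dissects the larger set $\mathcal{S}^3$ (where repeated parts $\equiv 0\pmod 3$ are allowed, giving clean conditions: types I, II have distinct heights, types III, IV are unrestricted) to get \eqref{gfSk}, and then reaches $\mathcal{DS}^3$ via the weight-preserving bijection $\psi_3:\mathcal{S}^3\to\mathcal{DS}^3\times\mathcal{E}^3$ of Theorem~\ref{decomp1}, dividing out the factor $1/(R;R)_\infty$. Your plan would need either this two-step detour or a careful direct argument for $\mathcal{DS}^3$. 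With the domain corrected and that decomposition added, the rest of your outline (specializing the six variables to isolate the residues $3-m$ and $m$, and matching the resulting product $(-q^{3-m},-q^3,-q^{3+m},-q^6;q^6)_\infty$ with the generating function of $C_m(n)$) goes through as you describe.
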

To prove the result,  as for Theorem~\ref{bu-2}, they derived a finite version of the above identity using recurrence relations and 
proved a finite analogue~\cite[Theorem 2.5]{bu1} of \eqref{bunewcomp}.
At the end of their paper, Berkovich-Uncu noticed  that  one can obtain \eqref{bunewcomp} as a non-trivial 
corollary of Boulet's results~\cite{bou} and made the suggestion on extending Boulet's 
work \cite{bou} to deal with its finite version. In answering their request, we give a similar construction in the case of modulo $3$ and $6$, which runs parallel to Boulet's case of modulo $2$ and $4$. To deal with our case, we need to introduce a different weight with six parameters $\omega^3_{\pi}(a,b,c,d,e,f)$, which we include in Figure~\ref{2weights} as well for easy comparison.



The main goal of this paper is to combinatorially establish the weighted generating functions for a special class of partitions that we call ``$k$-strict'' and then demonstrate the unifying nature of this approach in the case of $k=2$ and $k=3$.

In Section~\ref{sec: kstrict}, we define $k$-strict partitions and introduce a key decomposition of partitions, then we combinatorially deduce the aforementioned generating functions (see Theorem~\ref{main1}) and their specializations (see Theorem~\ref{main2}). A further specialization leads to a generalization of Berkovich and Uncu's new companion of Capparelli's identities (see Theorem~\ref{bu-k}). Next in Section~\ref{sec: mod3}, we see the first application of our construction, which produces one identity (see \eqref{gfBevaid}) that includes Berkovich-Uncu's new companion of Capparelli's identities as two special cases. Section~\ref{sec: mod2} presents another application, which results in a combinatorial proof of  a previous result (see (\ref{iz1}, \ref{iz2}))
of Ishikawa and the second author~\cite{iz},
and we explain the connections between our methods and the existing proofs, then continue to discuss the more general doubly-bounded case. Finally, in Section~\ref{sec: modk} we conclude with some remarks.   


\section{$k$-strict Partitions and main results}\label{sec: kstrict}
In this section, we introduce a new class of partitions as well as a key decomposition that will be the main tools to obtain all of our results. This novel class of partitions is in some sense broader than Euler's strict partition. To make it precise, we give the following definition.
\begin{Def}\label{kstrictdef}
Given an integer $k\geq 1$, we call a partition $\pi$ ``$k$-strict'' if 
at most one part occurs in each block $\{mk+1, \ldots, mk+k-1\}$ with $m\in \N$, in other words,
if for any integers $r_1,r_2$, with $ 1\leq r_1\leq r_2\leq k-1$,
\begin{align}\label{kstrictcond}
mk+r_1\quad \text{and} \quad mk+r_2 \quad\text{do not appear together as   parts in } \pi.
\end{align} 
\end{Def}

The ``$1$-strict'' partitions are just ordinary partitions because $1\leq r_1\leq r_2\leq k-1=0$ voids  condition (\ref{kstrictcond}), while  ``$2$-strict'' partitions are those partitions with odd parts all distinct.
Note that the later partitions have been thoroughly studied in the literature; see for example,  Alladi \cite{alla}, Andrews \cite{and1,and2} and Hirschhorn-Sellers \cite{hs}. For  $k\geq 3$ the notion of "k-strict" partitions seems new.
For example, there are nine $3$-strict partitions of $10$:
\begin{gather*}
(10), (9,1), (8,2), (7,3), (6,4),
(6,3,1), (5,3,2), (4,3,3), (3,3,3,1).
\end{gather*}

\begin{Def}
Let $\mathcal{S}^k$ be  the set of $k$-strict partitions and 
$\mathcal{E}^k$  the set of partitions with parts $\equiv 0 \pmod k$, and each part occurs even number of times.
\end{Def}

Clearly we have $\mathcal{E}^k\subset \mathcal{S}^k$.
By Definition~\ref{kstrictdef}, one checks easily that $\ \mathcal{D}\cap \mathcal{S}^1= \mathcal{D}\cap \mathcal{S}^2=\mathcal{D}$, 
but $ \mathcal{D}\cap \mathcal{S}^k\neq \mathcal{D}$ for $k\geq 3$. 
This  observation explains the simpler structure of $2$-strict case related with the new little G\"{o}llnitz identities (Section~\ref{sec: mod2}) and suggests  more intricate conditions for $k$-strict case with  $k\geq 3$ (see Section~\ref{sec: mod3} for $k=3$).  We denote $ \mathcal{D}\cap \mathcal{S}^k$ as $\mathcal{DS}^k$ for short. 

\begin{Def}[$\omega^k$-weight] Let $k\geq 1$ be a positive integer.
Given a partition $\pi$, 
we label 
the cells in the odd-indexed (resp. even-indexed) rows of $\pi's$ 
diagram cyclically from left to right with $a_1, a_2, \ldots, a_k$ (resp. $b_1, b_2, \ldots, b_k$) and define  the product of all the labels on the  diagram as its 
$\omega^k$-weight, denoted by
$\omega_{\pi}^{k}\big((a_i), (b_i)\big)$, see  Figure~\ref{2weights}
for two examples when $k=2$ and $k=3$.
\end{Def}

When no confusion is caused, we simply write $\omega^k$ by
suppressing  the labels $(a_i), (b_i)$. Now we are ready to describe our key decomposition $\psi_k$. Given a partition $\pi\in\mathcal{S}^k$, we repeatedly remove even copies of its repeated parts if any, which are necessarily $\equiv 0 \pmod k$  due to condition (\ref{kstrictcond}), then we are left with a partition, say $\pi^1\in \mathcal{DS}^k$, and all the removed parts form a new partition $\pi^2\in\mathcal{E}^k$. In order  to keep track of the weight $\omega^{k}$ associated with each partition, we state the following theorem.

\begin{theorem}\label{decomp1}
For any $k\geq 1$, the map $\psi_k: \pi \longmapsto (\pi^1, \pi^2)$ as described above is a weight-preserving bijection from $\mathcal{S}^k$ to 
$\mathcal{DS}^k\times \mathcal{E}^k$
such that $\ell(\pi)=\ell(\pi^1)+\ell(\pi^2)$ and
\begin{align}\label{decomp1-eq}
\omega^k_{\pi}\big((a_i), (b_i)\big) =\omega^k_{\pi^1}\big((a_i), (b_i)\big)\, \omega^k_{\pi^2}\big((a_i), (b_i)\big) , 
\end{align} 
where $\ell(\pi)$ stands for the number of parts of $\pi$.
\end{theorem}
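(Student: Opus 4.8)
The plan is to verify directly that the described procedure $\psi_k$ is well-defined, that it lands in the claimed codomain, that it is a bijection, and that it respects the statistics $\ell$ and $\omega^k$. First I would check well-definedness: starting from $\pi\in\mathcal S^k$, any part $\lambda$ that occurs with multiplicity $\geq 2$ must, by condition~\eqref{kstrictcond} applied with $r_1=r_2$, satisfy $\lambda\equiv 0\pmod k$ (since if $\lambda=mk+r$ with $1\le r\le k-1$ then $\lambda$ appearing twice would give $mk+r_1$ and $mk+r_2$ with $r_1=r_2=r$ appearing together). So the repeated parts are all multiples of $k$. ``Repeatedly removing even copies of repeated parts'' means: for each part value $\lambda$ with multiplicity $m_\lambda$, write $m_\lambda=q_\lambda\cdot 2+\varepsilon_\lambda$ with $\varepsilon_\lambda\in\{0,1\}$ only when $\lambda$ is a multiple of $k$ (for $\lambda$ not a multiple of $k$ we already have $m_\lambda\le 1$), keep $\varepsilon_\lambda$ copies in $\pi^1$ and move the remaining $2q_\lambda$ copies to $\pi^2$. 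Then $\pi^1$ has every part occurring at most once inside each residue block, and additionally every multiple of $k$ occurs at most once, so $\pi^1\in\mathcal D\cap\mathcal S^k=\mathcal{DS}^k$; and $\pi^2$ consists solely of parts $\equiv 0\pmod k$, each with even multiplicity $2q_\lambda$, so $\pi^2\in\mathcal E^k$. This establishes $\psi_k(\mathcal S^k)\subseteq\mathcal{DS}^k\times\mathcal E^k$.

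Next I would construct the inverse. Given $(\sigma,\tau)\in\mathcal{DS}^k\times\mathcal E^k$, form $\pi$ by taking the multiset union of the parts of $\sigma$ and $\tau$. I must check $\pi\in\mathcal S^k$: any part value $\lambda$ occurring twice or more in $\pi$ gets at least one copy from $\tau$ (since $\sigma\in\mathcal D$ contributes at most one copy of each value), hence $\lambda\equiv 0\pmod k$; and two distinct values $mk+r_1$, $mk+r_2$ in the same block with $1\le r_1\le r_2\le k-1$ cannot both appear, because neither is a multiple of $k$ so both must come from $\sigma$, contradicting $\sigma\in\mathcal S^k$. Thus $\pi\in\mathcal S^k$. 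Then I would verify that applying $\psi_k$ to this $\pi$ recovers $(\sigma,\tau)$: for each multiple of $k$, say with multiplicity $m$ in $\sigma$ (so $m\in\{0,1\}$) and $2q$ in $\tau$, the total multiplicity in $\pi$ is $m+2q$, whose parity is $m$, so $\psi_k$ returns exactly $m$ copies to $\pi^1$ (matching $\sigma$) and $2q$ to $\pi^2$ (matching $\tau$); non-multiples of $k$ all stay in $\pi^1$ and come only from $\sigma$. Conversely $\psi_k$ followed by union clearly returns $\pi$. So the two maps are mutually inverse, giving the bijection.

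Finally I would dispatch the weight and length statements, which are essentially immediate once one observes the crucial point: $\psi_k$ reorganizes parts but the resulting partitions $\pi^1$ and $\pi^2$ are \emph{interleaved} as rows of $\pi$ in a way that preserves row parities only if we are careful. Here the key fact is that every part moved into $\pi^2$ is a multiple of $k$, and a row of length $mk$ is labelled $a_1a_2\cdots a_k$ or $b_1b_2\cdots b_k$ repeated $m$ times — i.e. it contributes $(a_1\cdots a_k)^m$ or $(b_1\cdots b_k)^m$, a quantity \emph{independent of whether the row is odd- or even-indexed within its host partition}, since $(a_1\cdots a_k)$ and $(b_1\cdots b_k)$ — wait, these are not equal in general. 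This is the point requiring genuine care, and I expect it to be \textbf{the main obstacle}. The resolution is that one should define the decomposition so that removed pairs of equal rows are removed \emph{in adjacent positions}, or alternatively observe that the $\omega^k$-weight of $\pi$ only depends on the multiset of (row length, row parity) pairs, and that removing an even number $2q$ of copies of a multiple-of-$k$ part $\lambda=mk$ from $\pi$ removes exactly $q$ ``$a$-type'' rows and $q$ ``$b$-type'' rows (because equal consecutive rows alternate parity), contributing $(a_1\cdots a_k)^{mq}(b_1\cdots b_k)^{mq}$; meanwhile in $\pi^2\in\mathcal E^k$, having each part with even multiplicity means its diagram's rows also split evenly into $a$-type and $b$-type, giving the same contribution $\omega^k_{\pi^2}$. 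Hence the labels on the cells of $\pi$ match, block by block, the labels on $\pi^1$ and $\pi^2$ combined, which is exactly \eqref{decomp1-eq}; and $\ell(\pi)=\ell(\pi^1)+\ell(\pi^2)$ holds by construction. I would present the parity-alternation observation carefully, perhaps with a one-line lemma, since it is what makes the bijection weight-preserving.
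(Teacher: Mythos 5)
Your proposal is correct and follows essentially the same route as the paper: the same removal of even copies of repeated parts (which you rightly note are forced to be multiples of $k$), with the inverse given by multiset union, and the weight identity resting on the observation that an even block of consecutive equal rows splits evenly into odd- and even-indexed rows. That parity-alternation observation is exactly the content of the paper's explicit label-swap $a_i\leftrightarrow b_i$ in its Step~2, so your ``main obstacle'' is the same point the authors handle, and your resolution of it is sound.
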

\begin{proof}
Suppose we are given a $k$-strict partition $\pi$ with $\omega^k$-label, we take the following steps to obtain $\pi^1$ and $\pi^2$, also with $\omega^k$-label. We recommend Figure~\ref{pitopied} for illustration with one example of such decomposition when $k=3$.
\begin{itemize}
\item Step 1. If there are repeated parts, which are necessarily $\equiv 0 \pmod k$ remained in $\pi$, find the largest such part, say $\pi_t$, and suppose $\pi_t$ is repeated $m$ ($m\geq 2$) times. Otherwise jump to Step 3.
\item Step 2. Remove the first $2\lfloor {m}/{2} \rfloor$ appearances of $\pi_t$ from $\pi$. As for the labelling on these removed parts, if the first copy of $\pi_t$ is odd-indexed in $\pi$, then keep their original $\omega^k$-labels, if the first copy is even-indexed, then swap
$$
a_1\leftrightarrow b_1, a_2\leftrightarrow b_2, \ldots, a_k\leftrightarrow b_k \qquad (\star)
$$
for the labellings in these $2\lfloor {m}/{2} \rfloor$ copies, since these are even number of copies, so the total weight is preserved. Go back to Step 1.
\item Step 3. Collect all the parts removed in Step 2, together with their new labels to form partition $\pi^2$. Group the remaining parts in $\pi$ together with their original $\omega^k$-labels, call this new partition $\pi^1$.
\end{itemize}
Note that $\pi^2$ has $\omega^k$-label as a result of the modification ($\star$) we made in Step 2. And since in Step 2 we always remove even number of parts, the labelling on $\pi^1$ remains $\omega^k$-label as well. 
These two observations lead to $\omega^k_{\pi}=\omega^k_{\pi^1}\omega^k_{\pi^2}$
as well as to  $\ell(\pi)=\ell(\pi^1)+\ell(\pi^2)$. Every step of the construction is easily seen to be bijective. 
\end{proof}

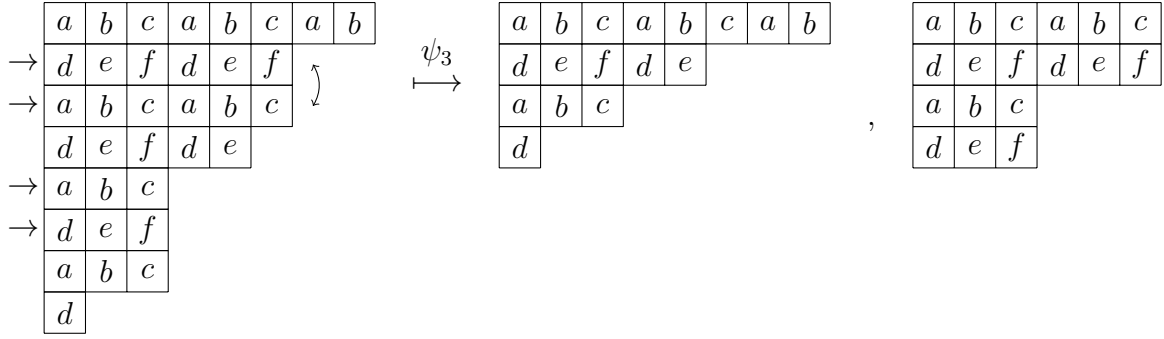
\begin{figure}
\begin{tikzpicture}[scale=0.55]
\draw (0,0) grid (1,1);
\draw (0,1) grid (3,2);
\draw (0,2) grid (3,3);
\draw (0,3) grid (3,4);
\draw (0,4) grid (5,5);
\draw (0,5) grid (6,6);
\draw (0,6) grid (6,7);
\draw (0,7) grid (8,8);

\draw (21,4) grid (24,5);
\draw (21,5) grid (24,6);
\draw (21,6) grid (27,7);
\draw (21,7) grid (27,8);
\draw (11,4) grid (12,5);
\draw (11,5) grid (14,6);
\draw (11,6) grid (16,7);
\draw (11,7) grid (19,8);

\draw (0.5,0.5) node{$d$};
\draw (0.5,1.5) node{$a$};
\draw (1.5,1.5) node{$b$};
\draw (2.5,1.5) node{$c$};
\draw (0.5,2.5) node{$d$};
\draw (1.5,2.5) node{$e$};
\draw (2.5,2.5) node{$f$};
\draw (0.5,3.5) node{$a$};
\draw (1.5,3.5) node{$b$};
\draw (2.5,3.5) node{$c$};
\draw (0.5,4.5) node{$d$};
\draw (1.5,4.5) node{$e$};
\draw (2.5,4.5) node{$f$};
\draw (3.5,4.5) node{$d$};
\draw (4.5,4.5) node{$e$};
\draw (0.5,5.5) node{$a$};
\draw (1.5,5.5) node{$b$};
\draw (2.5,5.5) node{$c$};
\draw (3.5,5.5) node{$a$};
\draw (4.5,5.5) node{$b$};
\draw (5.5,5.5) node{$c$};
\draw (0.5,6.5) node{$d$};
\draw (1.5,6.5) node{$e$};
\draw (2.5,6.5) node{$f$};
\draw (3.5,6.5) node{$d$};
\draw (4.5,6.5) node{$e$};
\draw (5.5,6.5) node{$f$};
\draw (0.5,7.5) node{$a$};
\draw (1.5,7.5) node{$b$};
\draw (2.5,7.5) node{$c$};
\draw (3.5,7.5) node{$a$};
\draw (4.5,7.5) node{$b$};
\draw (5.5,7.5) node{$c$};
\draw (6.5,7.5) node{$a$};
\draw (7.5,7.5) node{$b$};

\draw (21.5,4.5) node{$d$};
\draw (22.5,4.5) node{$e$};
\draw (23.5,4.5) node{$f$};
\draw (21.5,5.5) node{$a$};
\draw (22.5,5.5) node{$b$};
\draw (23.5,5.5) node{$c$};
\draw (21.5,6.5) node{$d$};
\draw (22.5,6.5) node{$e$};
\draw (23.5,6.5) node{$f$};
\draw (24.5,6.5) node{$d$};
\draw (25.5,6.5) node{$e$};
\draw (26.5,6.5) node{$f$};
\draw (21.5,7.5) node{$a$};
\draw (22.5,7.5) node{$b$};
\draw (23.5,7.5) node{$c$};
\draw (24.5,7.5) node{$a$};
\draw (25.5,7.5) node{$b$};
\draw (26.5,7.5) node{$c$};

\draw (11.5,4.5) node{$d$};
\draw (11.5,5.5) node{$a$};
\draw (12.5,5.5) node{$b$};
\draw (13.5,5.5) node{$c$};
\draw (11.5,6.5) node{$d$};
\draw (12.5,6.5) node{$e$};
\draw (13.5,6.5) node{$f$};
\draw (14.5,6.5) node{$d$};
\draw (15.5,6.5) node{$e$};
\draw (11.5,7.5) node{$a$};
\draw (12.5,7.5) node{$b$};
\draw (13.5,7.5) node{$c$};
\draw (14.5,7.5) node{$a$};
\draw (15.5,7.5) node{$b$};
\draw (16.5,7.5) node{$c$};
\draw (17.5,7.5) node{$a$};
\draw (18.5,7.5) node{$b$};

\draw (9.5,6) node{$\longmapsto$};
\draw (9.5,6.8) node{$\psi_3$};
\draw (20,5) node{,};
\path[<->] (6.5,5.5) edge [bend right=30] (6.5,6.5);
\draw (-0.5,2.5) node{$\rightarrow$};
\draw (-0.5,3.5) node{$\rightarrow$};
\draw (-0.5,5.5) node{$\rightarrow$};
\draw (-0.5,6.5) node{$\rightarrow$};

\end{tikzpicture}
\caption{Decomposition of $\pi=(8,6,6,5,3,3,3,1)$ into $(\pi^1,\pi^2)$ with $\omega^3$-labels}
\label{pitopied}
\end{figure}
We are now ready to compute the $\omega^k$-weight generating functions of the three sets $ \mathcal{E}^k$,
$\mathcal{S}^k$ and $\mathcal{DS}^k$ of partitions.
\begin{theorem}\label{main1}
For any integer $k\geq 1$, let $\{a_1,a_2,\ldots,a_k,b_1,b_2,\ldots,b_k\}$ be $2k$ commutable variables, and let
\begin{align*}
z_k &=a_1\ldots a_k, \quad w_k=a_1b_1\ldots a_kb_k,\\
x_k &=a_1+a_1a_2+\cdots+a_1\ldots a_{k-1},\\
y_k &=z_k(b_1+b_1b_2+\cdots+b_1\ldots b_{k-1}). 
\end{align*}
Then we have
\begin{align}
\sum\limits_{\pi\in \mathcal{E}^k}\omega_{\pi}^{k}\big((a_i), (b_i)\big) &=\dfrac{1}{(w_k;w_k)_{\infty}},\label{gfEk}\\
\sum\limits_{\pi\in\mathcal{S}^k}\omega_{\pi}^{k}\big((a_i), (b_i)\big) 
&=\dfrac{(-x_k,-y_k;w_k)_{\infty}}{(z_k,w_k;w_k)_{\infty}},\label{gfSk}\\
\sum\limits_{\pi\in\mathcal{DS}^k}\omega_{\pi}^{k}\big((a_i), (b_i)\big) &=\dfrac{(-x_k,-y_k;w_k)_{\infty}}{(z_k;w_k)_{\infty}}.\label{gfDSk}
\end{align}
\end{theorem}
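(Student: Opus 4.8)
The plan is to prove the three identities in cascade: first \eqref{gfEk} directly, then observe that by Theorem~\ref{decomp1} the identities \eqref{gfSk} and \eqref{gfDSk} are equivalent once \eqref{gfEk} is known, so that only one of them — say \eqref{gfDSk} — needs to be proved by hand, via a dissection of the Ferrers diagram.

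For \eqref{gfEk}: if $\pi\in\mathcal{E}^k$, every part is a multiple of $k$ and occurs with even multiplicity, so listing the parts in weakly decreasing order breaks them into maximal blocks of equal parts, each of even length and beginning at an odd-indexed row; thus each block splits into consecutive (odd row, even row) pairs, and a pair of rows of common length $mk$ has $\omega^k$-weight $(a_1\cdots a_k)^m(b_1\cdots b_k)^m=w_k^m$. Since for each $m\ge1$ the number of such pairs is an arbitrary nonnegative integer and these choices are independent, summing yields $\prod_{m\ge1}(1-w_k^m)^{-1}=1/(w_k;w_k)_\infty$. By Theorem~\ref{decomp1}, $\psi_k$ is a weight-preserving bijection $\mathcal{S}^k\to\mathcal{DS}^k\times\mathcal{E}^k$ with $\omega^k_\pi=\omega^k_{\pi^1}\,\omega^k_{\pi^2}$, so in combination with \eqref{gfEk} we get
\begin{equation*}
\sum_{\pi\in\mathcal{S}^k}\omega^k_\pi=\Bigl(\sum_{\pi\in\mathcal{DS}^k}\omega^k_\pi\Bigr)\frac{1}{(w_k;w_k)_\infty},
\end{equation*}
which reduces \eqref{gfSk} to \eqref{gfDSk}.

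To establish \eqref{gfDSk} I would dissect the diagram of $\pi\in\mathcal{DS}^k$ in both directions at once: vertically by the lines after columns $k,2k,3k,\dots$, cutting it into width-$k$ strips $G_1,G_2,\dots$, and horizontally by pairing the rows of $\pi$ consecutively. A $2\times k$ block whose two rows are full has $\omega^k$-weight $w_k$, and the only blocks that fail to be of this form sit at the right end of a strip or at the bottom of the diagram: because the labels are $k$-periodic along rows, a full row of a strip whose pairing partner is absent contributes $z_k$, while a row that \emph{terminates short} in strip $G_{m+1}$ (length $mk+r$ with $1\le r\le k-1$) contributes, after its preceding full blocks, a factor $a_1\cdots a_r$ or $b_1\cdots b_r$ — precisely a monomial of $x_k$ or of $y_k$, the choice dictated by the parity of its row index. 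The $k$-strict condition together with distinctness forces at most one short-terminating row and at most one part ending full (necessarily a multiple of $k$) in each strip, so the diagram is encoded by the weakly decreasing sequence $M_m:=\#\{i:\pi_i>mk\}$, whose successive differences lie in $\{0,1,2\}$, together with a short-termination datum in $\{0,1,\dots,k-1\}$ for each strip, compatible with those differences; conversely, any such data reassembles into a legitimate member of $\mathcal{DS}^k$. (For $k=2$ this recovers the dissection behind Boulet's strict identity \eqref{boulet-strict}, a convenient sanity check.)

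It remains to sum $\omega^k_\pi$ over all admissible data, and that evaluation is the real obstacle. One route is to sum first over the short-termination data — a difference of $1$ becomes a binary choice between contributing a plain $w_k$- or $z_k$-factor and contributing an $x_k$- or $y_k$-monomial, while a difference of $2$ forces the monomial — and then to solve the resulting recursion, of the shape $\Sigma_N\,(1-D(N,0))=D(N,1)\,\Sigma_{N-1}+D(N,2)\,\Sigma_{N-2}$ for the generating function $\Sigma_N$ of those $\pi\in\mathcal{DS}^k$ with exactly $M_0=N$ parts, whose sum over $N$ telescopes to $\dfrac{(-x_k;w_k)_\infty(-y_k;w_k)_\infty}{(z_k;w_k)_\infty}$; a more satisfying route is to refine the admissible data into an explicit bijection with the ``bricks'' of the product, namely arbitrarily many copies of $z_kw_k^{\,j}$ for each $j\ge0$, at most one monomial of $x_k$ times $w_k^{\,j}$, and at most one monomial of $y_k$ times $w_k^{\,j}$. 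The crux in either case is to show that the coupling between consecutive strips — through the sequence $(M_m)$ and through the parity that decides $x_k$ versus $y_k$ — collapses to the independent, level-by-level choices recorded by the infinite product; once \eqref{gfDSk} is proved, \eqref{gfSk} and \eqref{gfEk} finish the theorem.
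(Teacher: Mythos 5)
Your proof of \eqref{gfEk} is correct and matches the paper's (halving the row-pairs of an element of $\mathcal{E}^k$ gives an arbitrary partition weighted by powers of $w_k$), and your observation that Theorem~\ref{decomp1} makes \eqref{gfSk} and \eqref{gfDSk} equivalent once \eqref{gfEk} is known is exactly the reduction the paper uses. The gap is in the remaining step: you choose to prove \eqref{gfDSk} directly by dissecting $\pi\in\mathcal{DS}^k$ into width-$k$ strips, and you yourself flag that the resulting sum over ``admissible data'' is ``the real obstacle''. You offer a recursion whose coefficients $D(N,\cdot)$ are never defined or solved, and a hoped-for bijection onto ``bricks'' that is never constructed; neither route is carried out, so \eqref{gfDSk} is not actually proved. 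The difficulty you run into is real: distinctness of parts couples consecutive strips (through your sequence $(M_m)$ and the parity bookkeeping that decides between $x_k$ and $y_k$), so the strip data do not factor into independent level-by-level choices.

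The fix is to apply the vertical dissection to $\mathcal{S}^k$ rather than to $\mathcal{DS}^k$, i.e.\ to prove \eqref{gfSk} directly and then obtain \eqref{gfDSk} by dividing out the $\mathcal{E}^k$ factor --- the reverse of your reduction. On $\mathcal{S}^k$ the condition \eqref{kstrictcond} says each width-$k$ strip has at most one partial bottom row, so every strip is one of four types (partial odd-indexed bottom row; partial even-indexed bottom row; all rows full, odd height; all rows full, even height), with respective weights $(w_k)^m x_k$, $(w_k)^m y_k$, $z_k(w_k)^m$ and $(w_k)^{m+1}$. Because repeated multiples of $k$ are allowed in $\mathcal{S}^k$, strips of the last two types may repeat freely while those of the first two types have distinct heights within their type, and the four families decouple, giving at once the factors $(-x_k;w_k)_\infty$, $(-y_k;w_k)_\infty$, $1/(z_k;w_k)_\infty$ and $1/(w_k;w_k)_\infty$ whose product is \eqref{gfSk}. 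The coupling that blocks your version disappears precisely because $\mathcal{S}^k$, unlike $\mathcal{DS}^k$, is closed under inserting repeated full-width rows. (A completed direct bijection for \eqref{gfDSk} of the kind you sketch would in fact bear on the open problem raised in the paper's final remarks about the non-bijective cancellation of $1/(w_k;w_k)_\infty$; but as written your argument is a sketch, not a proof.)
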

\begin{proof}
Let  $\pi$ be  a  partition in  $\mathcal{E}^k$, then 
 each part of $\pi$ is a multiple of $k$ and repeated even times. 
 If $\pi=(\pi_1, \ldots, \pi_{2l})$,  we define $\pi^*=(\pi_1/k, \pi_3/k,\ldots, \pi_{2l-1}/k)$.
 Clearly the mapping $\pi\mapsto \pi^*$ is a bijection from $\mathcal{E}^k$ to $\mathcal{P}$ such that $\omega^k_\pi=(w_k)^{|\pi^*|}$. Identity~\eqref{gfEk} follows then from  the generating function of partitions.

\begin{figure}
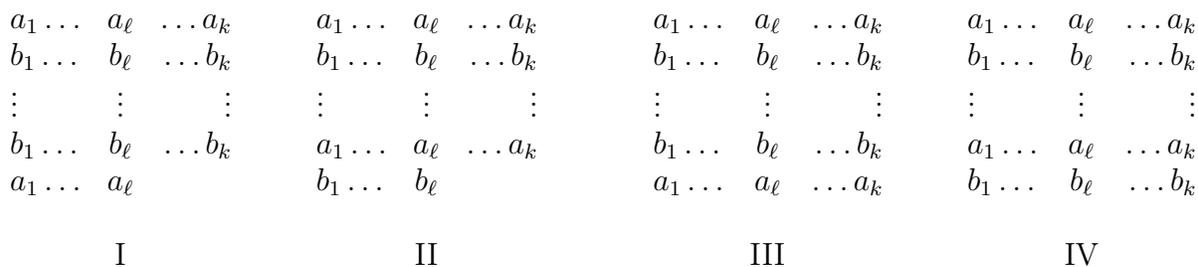

$$
\begin{array}{lcrc lcrc lcrc lcr}
a_1\ldots &a_\ell&  \ldots a_k & \quad & a_1\ldots &a_\ell & \ldots a_k& \quad &
a_1\ldots& a_\ell& \ldots a_k&\quad& a_1\ldots &a_\ell&  \ldots a_k \\
b_1\ldots & b_\ell&   \ldots b_k & \quad & b_1\ldots& b_\ell  & \ldots b_k& \quad &
b_1\ldots & b_\ell&   \ldots b_k&\quad &b_1\ldots&  b_\ell&   \ldots b_k\\
\vdots &  \vdots&   \vdots & \quad & \vdots&  \vdots&  \vdots & \quad &\vdots&  \vdots& \vdots &\quad&\vdots &  \vdots&   \vdots \\
b_1\ldots &b_\ell&   \ldots b_k & \quad &a_1\ldots& a_\ell&   \ldots a_k & \quad & 
b_1\ldots& b_\ell&   \ldots b_k&\quad & a_1\ldots& a_\ell&   \ldots a_k \\
a_1 \ldots&  a_\ell&   &   \quad & b_1  \ldots& b_\ell &   & \qquad & 
a_1\ldots& a_\ell&  \ldots a_k &\quad& b_1\ldots&  b_\ell&   \ldots b_k  \\
 &&&&&&\\
  &\text{I} &    & \quad &  &\text{II} &   & \quad &   &\text{III}&  &\quad &&\text{IV} &
\end{array}
$$
\caption{Four possible types of vertical blocks where $1\leq \ell\leq k-1$.}
\label{sixtypes}
\end{figure}

 Given any $\pi\in\mathcal{S}^k$, we decompose its labelled  diagram vertically into blocks of width $k$. 
 Due to condition (\ref{kstrictcond}) in Definition~\ref{kstrictdef}, it is not difficult to see  that there are only
the 4 types of blocks  as shown in Figure~\ref{sixtypes}. Moreover, the weight of the blocks of type
 I, II,  III and IV are respectively 
$$
(w_k)^{m} x_k,\quad  (w_k)^{m} y_k,
\quad z_k(w_k)^{m}, \quad (w_k)^{m+1}.
$$ 
where $2m+1$ (resp. $2m+2$) is the height of  the block of type I and III (resp. II and IV) in Figure~\ref{sixtypes}.

Now for each type of blocks, it is routine to give their generating functions, just note that for types 
I and II,  the blocks of the same type must have distinct length, while for types III and IV
 repetition is possible. Therefore, $(-x_k;w_k)_{\infty}$ generates type 
 I and $( -y_k;w_k)_{\infty}$   generates type II blocks, 
  $1/(z_k;w_k)_{\infty}$ generates type III blocks, 
 and $1/(w_k;w_k)_{\infty}$ generates type IV blocks. 
 Finally the generating function for all such $\pi\in\mathcal{S}^k$ 
   is the product of all 4 types, this establishes \eqref{gfSk}.
   Finally, in view of \eqref{decomp1-eq}  we derive  identity~\eqref{gfDSk} from \eqref{gfEk}  and \eqref{gfSk}.
\end{proof}

 For any given partition $\pi$ and $1\leq l\leq k-1$, we use $o_{l}(\pi)$ (resp. $e_{l}(\pi)$) to denote the number of odd-indexed (resp. even-indexed) parts that are $\equiv l \pmod k$. And recall $|\pi_{o}|$ and $|\pi_{e}|$ from \eqref{boulet-strict-ss-bu}.
\begin{theorem}\label{main2}
For any integer $k\geq 1$, we have
\begin{align}\label{gfDSkeva}
\sum\limits_{\pi\in\mathcal{DS}^k}x^{|\pi_{o}|}y^{|\pi_{e}|}\prod\limits_{l=1}^{k-1}u_l^{o_l(\pi)}v_l^{e_l(\pi)}
&=\dfrac{\biggl(-\sum\limits_{l=1}^{k-1}u_lx^l,-x^k\sum\limits_{l=1}^{k-1}v_ly^l;x^ky^k\biggr)_{\infty}}{(x^k;x^ky^k)_{\infty}}.
\end{align}
\end{theorem}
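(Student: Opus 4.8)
The plan is to derive \eqref{gfDSkeva} directly from identity~\eqref{gfDSk} of Theorem~\ref{main1} by one well-chosen substitution of the $2k$ variables $a_1,\dots,a_k,b_1,\dots,b_k$. The choice is dictated by the shape of the $\omega^k$-weight: an odd-indexed row of size $\pi_i=qk+r$ with $0\le r\le k-1$ is labelled $(a_1\cdots a_k)^q(a_1\cdots a_r)$, so I want to pick the $a_i$ so that this monomial collapses to $x^{\pi_i}u_r$ (with the convention $u_0=1$), and symmetrically for the even-indexed rows, the $b_i$, and the variables $y,v_r$. Imposing $a_1\cdots a_k=x^k$ together with $a_1\cdots a_r=x^ru_r$ for $1\le r\le k-1$ forces
\[
a_1=xu_1,\qquad a_r=\frac{xu_r}{u_{r-1}}\ \ (2\le r\le k-1),\qquad a_k=\frac{x}{u_{k-1}},
\]
and likewise $b_1=yv_1$, $b_r=yv_r/v_{r-1}$ for $2\le r\le k-1$, and $b_k=y/v_{k-1}$.

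These are Laurent monomials in the $u_l,v_l$, but the specialization is harmless: after substitution each $\pi\in\mathcal{DS}^k$ contributes $x^{|\pi_o|}y^{|\pi_e|}\prod_{l=1}^{k-1}u_l^{o_l(\pi)}v_l^{e_l(\pi)}$, and since $|\pi_o|,|\pi_e|,o_l(\pi),e_l(\pi)$ are all bounded by $|\pi|$, the resulting series still converges $(x,y)$-adically, so \eqref{gfDSk} may be specialized term by term. Next I would record the images of the four quantities entering \eqref{gfDSk}: by telescoping, $z_k=a_1\cdots a_k\mapsto x^k$ and $w_k=(a_1\cdots a_k)(b_1\cdots b_k)\mapsto x^ky^k$, while $x_k=\sum_{r=1}^{k-1}a_1\cdots a_r\mapsto\sum_{l=1}^{k-1}u_lx^l$ and $y_k=z_k\sum_{r=1}^{k-1}b_1\cdots b_r\mapsto x^k\sum_{l=1}^{k-1}v_ly^l$. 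Multiplying the row contributions $x^{\pi_i}u_{r_i}$ over the odd-indexed rows and $y^{\pi_i}v_{r_i}$ over the even-indexed rows shows that $\omega_\pi^k$ becomes precisely $x^{|\pi_o|}y^{|\pi_e|}\prod_{l=1}^{k-1}u_l^{o_l(\pi)}v_l^{e_l(\pi)}$: the exponent of $x$ is the sum of the odd-indexed part sizes, namely $|\pi_o|$, and each odd-indexed part $\equiv l\pmod k$ with $1\le l\le k-1$ supplies one factor $u_l$, hence $o_l(\pi)$ factors in all. Feeding all of this into \eqref{gfDSk} yields \eqref{gfDSkeva}.

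There is no genuine obstacle here; the whole proof is bookkeeping — checking the telescoping products and confirming that the Laurent-monomial specialization is legitimate. It is worth flagging the boundary case $r=0$ (a row whose size is divisible by $k$ carries no $u$- or $v$-factor, matching the fact that the product ranges only over $1\le l\le k-1$), and observing that when $k=1$ both numerator sums are empty, so \eqref{gfDSkeva} collapses to the generating-function identity $\sum_{\pi\in\mathcal D}x^{|\pi_o|}y^{|\pi_e|}=1/(x;xy)_\infty$ for strict partitions.
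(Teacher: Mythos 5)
Your proposal is correct and is essentially identical to the paper's proof: the paper also obtains \eqref{gfDSkeva} from \eqref{gfDSk} via the substitution $a_l=u_lx/u_{l-1}$, $b_l=v_ly/v_{l-1}$ with $u_0=u_k=v_0=v_k=1$, which is exactly your choice. Your write-up merely spells out the telescoping and the row-by-row weight check that the paper leaves implicit.
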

\begin{proof}
In \eqref{gfDSk}, simply take $a_l=u_lx/u_{l-1},b_l=v_ly/v_{l-1}$, for $l=1,\ldots,k$, where $u_0=u_k=v_0=v_k=1$.
\end{proof}

To produce some ``nice'' partition theorems, one needs to make further restrictions on the residue class modulo $k$ for odd-indexed parts and even-indexed parts separately. Essentially, one wants to reduce both sums $\sum\limits_{l=1}^{k-1}u_lx^l$ and $\sum\limits_{l=1}^{k-1}v_ly^l$ each to a single term. There are $(k-1)^2$ different ways this can be done. 
For general $k$, we only give one pair below to show the idea. 

\begin{Def}\label{kAC}
For integers $n\geq 0, k\geq 3$ and $m\in\{1,2\}$. We use $A^k_m(n)$ to denote the number of $k$-strict partitions of $n$ such that:
\begin{enumerate}[i.]
\item No parts can be repeated;
\item all the odd-indexed parts are $\equiv k\; \text{or}\;  3-m \pmod k$;
\item all the even-indexed parts are $\equiv k\; \text{or}\; m \pmod k$.
\end{enumerate}
And we use $C^k_m(n)$ to denote the number of partitions of $n$ into distinct parts which are
congruent  to
\begin{align}\label{Ckmn}
 3-m, k, k+m \text{ or } 2k\pmod {2k}.
\end{align}
\end{Def}
\begin{theorem}\label{bu-k}
For integers $n,i,j\geq 0, k\geq 3$ and $m\in\{1,2\}$, the number of partitions enumerated by $A^k_m(n)$ that have exactly $i$ parts $\equiv 3-m \pmod k$ and $j$ parts $\equiv m \pmod k$ 
equals the number of partitions enumerated by $C^k_m(n)$ that have exactly $i$ parts $\equiv 3-m \pmod {2k}$ and $j$ parts $\equiv k+m \pmod {2k}$. In particular, we have
\begin{align}\label{ACK}
A^k_m(n)=C^k_m(n). 
\end{align}
\end{theorem}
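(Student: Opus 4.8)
The plan is to specialize the master identity \eqref{gfDSkeva} of Theorem~\ref{main2} to extract both the refined statement and its corollary \eqref{ACK}. First I would identify the correct substitution. To collapse $\sum_{l=1}^{k-1}u_lx^l$ to the single term corresponding to residue $3-m$, set $u_l=0$ for all $l\neq 3-m$ and $u_{3-m}=1$; similarly set $v_l=0$ for $l\neq m$ and $v_m=1$ to collapse the even-indexed sum to the term for residue $m$. After this substitution, the left-hand side of \eqref{gfDSkeva} becomes
\begin{align*}
\sum_{\pi\in\mathcal{DS}^k}x^{|\pi_{o}|}y^{|\pi_{e}|}u^{o_{3-m}(\pi)}v^{e_m(\pi)},
\end{align*}
where I have renamed $u_{3-m}=u$, $v_m=v$, and the constraint is exactly that the surviving odd-indexed parts in residues $1,\dots,k-1$ lie in residue $3-m$ and the even-indexed ones in residue $m$; parts $\equiv k\equiv 0\pmod k$ are unconstrained by the $u_l,v_l$ and remain allowed. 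This matches conditions (ii) and (iii) in Definition~\ref{kAC}, and together with condition (i) (distinctness, which restricts $\mathcal{DS}^k$ to its no-repeated-parts elements — note $\mathcal{DS}^k$ already forbids repeats in nonzero residues, so (i) just additionally forbids repeated multiples of $k$) we get precisely the partitions counted by $A^k_m$. Wait — I need to be careful: $\mathcal{DS}^k=\mathcal{D}\cap\mathcal{S}^k$ already consists of strict partitions, so condition (i) is automatic and the left side enumerates exactly $A^k_m(n)$ refined by the statistics $o_{3-m}$ and $e_m$. Setting additionally $x=y=q$ gives $q^{|\pi|}u^{i}v^{j}$ where $i,j$ are the two refined statistics named in the theorem.

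Next I would compute the right-hand side under the same specialization with $x=y=q$. The substitution gives $-\sum u_lx^l\mapsto -uq^{3-m}$ and $-x^k\sum v_ly^l\mapsto -vq^{k+m}$, while $x^ky^k=q^{2k}$ and $x^k=q^k$. Hence the right side becomes
\begin{align*}
\frac{(-uq^{3-m},-vq^{k+m};q^{2k})_{\infty}}{(q^k;q^{2k})_{\infty}}.
\end{align*}
Using Euler's identity $(q^k;q^{2k})_\infty^{-1}=(-q^k;q^{2k})_\infty$, this equals
\begin{align*}
(-uq^{3-m};q^{2k})_{\infty}\,(-q^k;q^{2k})_{\infty}\,(-vq^{k+m};q^{2k})_{\infty},
\end{align*}
which is manifestly the generating function for partitions into distinct parts congruent to $3-m$, $k$, or $k+m$ modulo $2k$ — together with parts $\equiv 2k$, which are the ones contributed implicitly since $(-q^k;q^{2k})_\infty$ and the others run over all such residues including the ``$0$'' shift; more precisely the exponents appearing are $3-m,\,2k+3-m,\dots$ from the first factor, $k,\,3k,\dots$ from the second, $k+m,\,3k+m,\dots$ from the third, and I should double-check that $\{3-m,k,k+m\}\cup\{2k\}\pmod{2k}$ as listed in \eqref{Ckmn} is accounted for — the residue $2k\equiv 0$ does not actually occur as an exponent in any of these three $q$-Pochhammer factors, so I must verify whether \eqref{Ckmn} intends $2k$ to mean the residue $0$ (empty) or whether an extra factor is hidden; most likely the list in \eqref{Ckmn} is simply redundant/for emphasis and the product above is correct as the generating function for $C^k_m$. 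The variable $u$ marks parts $\equiv 3-m\pmod{2k}$ and $v$ marks parts $\equiv k+m\pmod{2k}$, matching the refined claim. Reading off coefficients of $q^nu^iv^j$ on both sides yields the refined equality, and $u=v=1$ gives \eqref{ACK}.

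The main obstacle I anticipate is \emph{not} the algebra but the bookkeeping around the residue class $0$ (i.e. multiples of $k$, and $2k$): one must check that the ``$\equiv k\pmod k$'' option in Definition~\ref{kAC}(ii)--(iii) — which really means ``$\equiv 0$'' — is correctly and consistently encoded on both sides, that setting $u_l=0$ legitimately kills the unwanted residues in the infinite product $\big(-\sum u_lx^l;w_k\big)_\infty$ without introducing convergence issues (it does, since these are formal power series in $q$), and that the decomposition of parts $\equiv 0\pmod k$ between the three factors on the product side lines up with how odd-indexed vs.\ even-indexed multiples of $k$ behave in $\mathcal{DS}^k$. A secondary point to handle cleanly: when $m=2$ one has $3-m=1$ and when $m=1$ one has $3-m=2$, and for $k=3$ the residues $\{1,2\}$ exhaust $\{1,\dots,k-1\}$, so the collapse is vacuous in the sense that no $u_l$ or $v_l$ actually gets set to zero — this is the clean case, and I would present $k=3$ first (Section~\ref{sec: mod3}) as the motivating instance before stating the general $k$ version. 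Once the dictionary between Definition~\ref{kAC} and the specialized identity is pinned down, the proof is a one-line substitution into Theorem~\ref{main2} followed by Euler's product identity.
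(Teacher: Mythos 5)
Your overall strategy is exactly the paper's: specialize \eqref{gfDSkeva} with $u_l=0$ for $l\neq 3-m$, $v_l=0$ for $l\neq m$, $x=y=q$, identify the left side with the refined count of $A^k_m(n)$, and read off coefficients of $u^iv^jq^n$. The left-hand-side bookkeeping (including the observation that $\mathcal{DS}^k\subset\mathcal{D}$ makes condition (i) of Definition~\ref{kAC} automatic) is correct. However, there is a concrete error on the product side: Euler's identity gives
\begin{align*}
\frac{1}{(q^k;q^{2k})_{\infty}}=(-q^k;q^{k})_{\infty}=(-q^k;q^{2k})_{\infty}\,(-q^{2k};q^{2k})_{\infty},
\end{align*}
not $(-q^k;q^{2k})_{\infty}$ alone as you wrote. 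You have dropped the factor $(-q^{2k};q^{2k})_{\infty}$, which is precisely what generates the distinct parts $\equiv 2k\pmod{2k}$. This is why you then observed, correctly, that the residue $2k$ ``does not actually occur as an exponent'' in your three factors — but your resolution (that the $2k$ in \eqref{Ckmn} is ``redundant/for emphasis'') is wrong: the residue class $2k\equiv 0\pmod{2k}$ is an essential part of the definition of $C^k_m(n)$ (e.g.\ for $k=5$, $m=1$, the partition $(10,2)$ of $12$ uses a part $\equiv 2k\pmod{2k}$ and is counted by $C^5_1(12)$, as in the paper's example). With the missing factor restored, the right side is
\begin{align*}
\bigl(-u q^{3-m},\,-q^k,\,-v q^{k+m},\,-q^{2k};\,q^{2k}\bigr)_{\infty},
\end{align*}
which is the paper's expression and matches Definition~\ref{kAC} exactly; the rest of your argument then goes through unchanged.
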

\begin{proof}
Let $\mathcal{DS}_m^k$ be the subset of partitions $\pi$ in $\mathcal{DS}^k$ satisfying 
 $o_i(\pi)=e_j(\pi)=0$ if $(i,j)\neq (3-m, m)$. 
 It is clear that the partitions in  $\mathcal{DS}_m^k$ are exactly the $k$-strict partitions 
 satisfying conditions i, ii and iii of Definition~\ref{kAC}.
Taking  $u_l=0$ for $l\neq 3-m$, $v_l=0$ for $l\neq m$ and $x=y=q$ in \eqref{gfDSkeva}  we obtain
\begin{align*}
\sum\limits_{\pi\in \mathcal{DS}_m^k}q^{|\pi|}u_{3-m}^{\textrm{o}_{3-m}(\pi)}v_m^{\textrm{e}_m(\pi)}
&=\dfrac{\bigl(-u_{3-m}q^{3-m},-v_mq^{k+m};q^{2k}\bigr)_{\infty}}{(q^k;q^{2k})_{\infty}}\\
&=
\bigl(-u_{3-m}q^{3-m},-q^k, -v_mq^{k+m}, -q^{2k};q^{2k}\bigr)_{\infty},
\end{align*}
then compare the coefficients of $u_{3-m}^iv_m^jq^n$ on both sides to get the first claim. Then summing over all $i,j$ we arrive at \eqref{ACK}.
\end{proof}

For example, when $n=12$ and $k=5$, the corresponding sets counted by the two sides 
of \eqref{ACK} are:
$$
\{(12), (7, 5)\}\quad \textrm{and} \quad \{(12), (10, 2)\}\quad \textrm{for $m=1$};
$$
and
$$
\{(10, 2), (6, 5, 1)\}\quad \textrm{and} \quad \{(11, 1), (7, 5)\}\quad \textrm{for $m=2$}.
$$
\begin{remark} 

As an afterthought, we can also prove \eqref{ACK} directly with the choice $(a,b,c,d) = (q^{3-m},q^{k-3+m},q^m,q^{k-m})$ in \eqref{boulet-strict}. This is Berkovich-Uncu's method in spirit. Indeed, given a partition $\lambda\in \mathcal{D}$ with the $a,b,c,d$ fillings of the Ferrers' diagram as shown in the first diagram of Figure~\ref{2weights}, replacing each cell labelled by $a$ (resp. $b$, $c$, $d$) by $3-m$ (resp. $k-3+m$, $m$, $k-m$) cells in the Ferrers diagram of $\lambda$ sets up a bijection 
between $\mathcal{D}$ and $\mathcal{DS}_m^k$ .
\end{remark}

In what follows, 
if $\mathcal{F}$  (resp. $f$) is the set (resp. generating function) 
of partitions under certain constraints, then we use $\mathcal{F}_{N,M}$ (resp. $f_{N,M}$) to denote the subset (resp.  generating function) satisfying the extra conditions on the largest part ($\leq N$) and the number of parts ($\leq M$), where $N$ and $M$ are non-negative integers or $\infty$. By convention, we think of the empty partition as the only element in $\mathcal{F}_{N,M}$ if either $N=0$ or $M=0$. When $N=M=\infty$, we simply write $\mathcal{F}$ (resp.~$f$) instead of $\mathcal{F}_{\infty, \infty}$ (resp.~$f_{\infty, \infty}$).

A natural refinement of Theorem~\ref{decomp1} is to bound the largest part of the partitions. Given a partition $\pi\in \mathcal{S}^k$ with its largest part $\leq N$, suppose $\psi_k(\pi)=(\pi^1,\pi^2)$. Note that the decomposition $\psi_k$ dissects horizontally by rows, hence after the decomposition, the largest parts in both $\pi^1$ and $\pi^2$ are still bounded by the same number $N$. Namely, we prove the following bounded version of Theorem~\ref{decomp1}.

\begin{corollary}\label{decomp1N}
The restriction of the 
 map $\psi_{k,N}:=\psi_k|_{\mathcal{S}_{N,\infty}^k}: \pi \longmapsto (\pi^1, \pi^2)$
  is a weight-preserving bijection from $\mathcal{S}_{N,\infty}^k$ to
  $\mathcal{DS}_{N,\infty}^k\times \mathcal{E}_{N,\infty}^k$
such that $\ell(\pi)=\ell(\pi^1)+\ell(\pi^2)$ and
\begin{align}
\omega^k_{\pi}\big((a_i), (b_i)\big) &=\omega^k_{\pi^1}\big((a_i), (b_i)\big) 
\omega^k_{\pi^2}\big((a_i), (b_i)\big) .
\end{align} 
\end{corollary}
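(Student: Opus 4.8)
The plan is to derive Corollary~\ref{decomp1N} directly from Theorem~\ref{decomp1} by verifying that both the map $\psi_k$ and its inverse respect the extra constraint ``largest part $\leq N$''. First I would recall that $\psi_k$ and all of its constituent steps (Step~1 locates a repeated part, Step~2 deletes an even number of copies of it and possibly applies the swap $(\star)$ to their labels, Step~3 regroups) act on partitions only by \emph{removing} parts or by \emph{relabelling} cells; no new part is ever created, and no part is ever enlarged. Consequently, if $\pi\in\mathcal{S}_{N,\infty}^k$, then every part of $\pi^1$ is a part of $\pi$ and every part of $\pi^2$ is a (repeated) part of $\pi$, so both $\pi^1$ and $\pi^2$ have largest part $\leq N$; that is, $\psi_k$ maps $\mathcal{S}_{N,\infty}^k$ into $\mathcal{DS}_{N,\infty}^k\times\mathcal{E}_{N,\infty}^k$.

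Next I would check the reverse inclusion for the inverse map. The inverse of $\psi_k$ takes a pair $(\pi^1,\pi^2)\in\mathcal{DS}^k\times\mathcal{E}^k$ and interleaves the parts of $\pi^2$ (each already occurring an even number of times and $\equiv 0\pmod k$) among the parts of $\pi^1$, undoing the swap $(\star)$ where appropriate; again this is purely a merge of existing parts, so $\psi_k^{-1}\big(\mathcal{DS}_{N,\infty}^k\times\mathcal{E}_{N,\infty}^k\big)\subseteq\mathcal{S}_{N,\infty}^k$. Therefore the restriction $\psi_{k,N}$ is a well-defined bijection onto $\mathcal{DS}_{N,\infty}^k\times\mathcal{E}_{N,\infty}^k$, and the weight-factorization \eqref{decomp1-eq} together with the length additivity $\ell(\pi)=\ell(\pi^1)+\ell(\pi^2)$ carries over verbatim from Theorem~\ref{decomp1}, since these identities hold pointwise for every $\pi\in\mathcal{S}^k$ and in particular for every $\pi\in\mathcal{S}_{N,\infty}^k$.

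There is essentially no obstacle here: the only point requiring a word of care is the observation, already flagged in the paragraph preceding the corollary, that $\psi_k$ ``dissects horizontally by rows,'' i.e. it manipulates whole parts and never splits a cell across the bound $N$, so bounding the largest part is preserved in \emph{both} directions. I would state this as the one-line justification and note that, by contrast, bounding the \emph{number of parts} would \emph{not} be preserved by $\psi_k$ in the obvious way (parts get shuffled between $\pi^1$ and $\pi^2$), which is why only the largest-part bound is refined here. This asymmetry is the substantive content of the corollary and deserves the explicit remark, but the proof itself is an immediate specialization of Theorem~\ref{decomp1}.
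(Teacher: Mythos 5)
Your proposal is correct and follows essentially the same route as the paper, which justifies the corollary by the single observation that $\psi_k$ dissects horizontally by rows, so whole parts are only moved between $\pi$, $\pi^1$, and $\pi^2$ and the bound on the largest part is preserved in both directions. Your additional remark that the number-of-parts bound would not restrict in the same way (since the inverse map can double the part count) is accurate and consistent with the paper's choice to bound only the largest part here.
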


The parameters $(a_i),(b_i)$ in the $\omega^k$-weight encode modular information for odd-indexed parts as well as even-indexed parts. Once we specialize their values properly, we recover a handful of partition theorems of Rogers-Ramanujan type. We elaborate on this fruitful direction in the next two sections.

\section{Application to a companion of Capparelli's identities}\label{sec: mod3}
Sections~\ref{sec: mod3} and \ref{sec: mod2} can be viewed as immediate applications of our Theorem~\ref{main1}. For the $3$-strict case,  we shall consider both the infinite case $\mathcal{S}^3=\mathcal{S}^3_{\infty,\infty}$ and the bounded case $\mathcal{S}^3_{N,\infty}$. We need the following special case of Theorem~\ref{main1}.
\begin{theorem}\label{gf32strict} Let $R=abcdef$. 
\begin{align}
\label{gfE3}\sum\limits_{\pi\in \mathcal{E}^3}\omega^3_{\pi}(a,b,c,d,e,f) &=\dfrac{1}{(R;R)_{\infty}},\\
\label{gf3id}
\sum\limits_{\pi\in\mathcal{S}^3}\omega^3_{\pi}(a,b,c,d,e,f)&=\dfrac{(-a-ab,-abcd-abcde;R)_{\infty}}{(abc,R;R)_{\infty}}.
\end{align}
\end{theorem}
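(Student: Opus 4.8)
The plan is to derive Theorem~\ref{gf32strict} as the $k=3$ specialization of Theorem~\ref{main1}, so essentially no new combinatorics is needed; the work is entirely in translating the generic $2k$-variable formulas to the six named variables $a,b,c,d,e,f$. First I would match up notation: in the $\omega^3$-weight of Figure~\ref{2weights} the odd-indexed rows are labelled cyclically $a,b,c$ and the even-indexed rows $d,e,f$, so we set $(a_1,a_2,a_3)=(a,b,c)$ and $(b_1,b_2,b_3)=(d,e,f)$. With this dictionary the auxiliary quantities of Theorem~\ref{main1} become $z_3=a_1a_2a_3=abc$, $w_3=a_1b_1a_2b_2a_3b_3=abcdef=R$, $x_3=a_1+a_1a_2=a+ab$, and $y_3=z_3(b_1+b_1b_2)=abc(d+de)=abcd+abcde$.

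Next I would simply substitute these into \eqref{gfEk} and \eqref{gfSk}. Equation \eqref{gfEk} gives immediately
\begin{align*}
\sum_{\pi\in\mathcal{E}^3}\omega^3_{\pi}(a,b,c,d,e,f)=\frac{1}{(w_3;w_3)_{\infty}}=\frac{1}{(R;R)_{\infty}},
\end{align*}
which is \eqref{gfE3}. Equation \eqref{gfSk} gives
\begin{align*}
\sum_{\pi\in\mathcal{S}^3}\omega^3_{\pi}(a,b,c,d,e,f)
=\frac{(-x_3,-y_3;w_3)_{\infty}}{(z_3,w_3;w_3)_{\infty}}
=\frac{(-a-ab,\,-abcd-abcde;R)_{\infty}}{(abc,R;R)_{\infty}},
\end{align*}
which is \eqref{gf3id}. (One should read $-a-ab$ as the single quantity $-(a+ab)$ sitting in the first slot of the $q$-Pochhammer symbol, and likewise $-abcd-abcde=-(abcd+abcde)$ in the second slot, matching the convention $(a_1,\ldots,a_m;q)_s=(a_1;q)_s\cdots(a_m;q)_s$ from the introduction.)

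The only point that needs a word of care — and the closest thing to an ``obstacle'' here, though it is minor — is verifying that the $k$-strict structure for $k=3$ is exactly the one pictured, i.e. that the four vertical block types of Figure~\ref{sixtypes} specialize correctly with $\ell\in\{1,2\}$; but this is already subsumed in the proof of Theorem~\ref{main1} and requires nothing new. I would therefore keep the proof to two lines: cite Theorem~\ref{main1}, record the substitution $(a_1,a_2,a_3,b_1,b_2,b_3)=(a,b,c,d,e,f)$, and note the resulting values of $z_3,w_3,x_3,y_3$. For completeness I might also remark that \eqref{gfDSk} specializes in the same way to give $\sum_{\pi\in\mathcal{DS}^3}\omega^3_{\pi}(a,b,c,d,e,f)=(-a-ab,-abcd-abcde;R)_{\infty}/(abc;R)_{\infty}$, which is the identity actually used in Section~\ref{sec: mod3}, although the statement as written only asks for \eqref{gfE3} and \eqref{gf3id}.
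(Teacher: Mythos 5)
Your proposal is correct and matches the paper exactly: the paper states Theorem~\ref{gf32strict} as "the following special case of Theorem~\ref{main1}" with no further argument, i.e.\ precisely the substitution $(a_1,a_2,a_3,b_1,b_2,b_3)=(a,b,c,d,e,f)$ giving $z_3=abc$, $w_3=R$, $x_3=a+ab$, $y_3=abcd+abcde$. Your computations of these quantities and the resulting specializations of \eqref{gfEk} and \eqref{gfSk} are all accurate.
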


\subsection{Infinite Case}


Condition ii in Definition~\ref{level3} is equivalent to our definition for the set $\mathcal{DS}^3$, while condition i is checked to be equivalent to condition ii $+$ iii in our Definition~\ref{kAC}. 
Thus,  the $k=3$ case of  Theorem~\ref{bu-k} reduces to \eqref{bunewcomp}.
Moreover, we get the weighted generating function for $\mathcal{DS}^3$ upon combining Theorem~\ref{decomp1} for $k=3$, (\ref{gfE3}) and (\ref{gf3id}), and cancelling out the common factor $1/(R;R)_{\infty}$ from both sides.

\begin{corollary}\label{gfB} Let $R=abcdef$.
\begin{align}\label{gfBid}
\sum\limits_{\pi\in\mathcal{DS}^3}\omega^3_{\pi}(a,b,c,d,e,f)&=\dfrac{(-a-ab,-abcd-abcde;R)_{\infty}}{(abc;R)_{\infty}}.
\end{align}
\end{corollary}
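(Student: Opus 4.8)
The plan is to obtain \eqref{gfBid} as a direct consequence of the $k=3$ specialization of Theorem~\ref{decomp1} together with the two product formulas collected in Theorem~\ref{gf32strict}. By Theorem~\ref{decomp1}, the map $\psi_3$ is a weight-preserving bijection $\mathcal{S}^3 \longrightarrow \mathcal{DS}^3\times \mathcal{E}^3$, and the multiplicativity relation \eqref{decomp1-eq} says $\omega^3_\pi = \omega^3_{\pi^1}\,\omega^3_{\pi^2}$. Summing this identity over all $\pi\in\mathcal{S}^3$ and using that $\psi_3$ is a bijection, the left-hand side becomes $\sum_{\pi\in\mathcal{S}^3}\omega^3_\pi$ while the right-hand side factors as a product of two independent sums:
\begin{align*}
\sum_{\pi\in\mathcal{S}^3}\omega^3_{\pi}(a,b,c,d,e,f)
&=\Biggl(\sum_{\pi^1\in\mathcal{DS}^3}\omega^3_{\pi^1}(a,b,c,d,e,f)\Biggr)
\Biggl(\sum_{\pi^2\in\mathcal{E}^3}\omega^3_{\pi^2}(a,b,c,d,e,f)\Biggr).
\end{align*}

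Next I would substitute the known evaluations. The left-hand side is \eqref{gf3id}, namely $(-a-ab,-abcd-abcde;R)_{\infty}/(abc,R;R)_{\infty}$, and the second factor on the right is \eqref{gfE3}, namely $1/(R;R)_{\infty}$. Solving for the remaining factor gives
\begin{align*}
\sum_{\pi\in\mathcal{DS}^3}\omega^3_{\pi}(a,b,c,d,e,f)
&=\frac{(-a-ab,-abcd-abcde;R)_{\infty}}{(abc,R;R)_{\infty}}\cdot (R;R)_{\infty}
=\frac{(-a-ab,-abcd-abcde;R)_{\infty}}{(abc;R)_{\infty}},
\end{align*}
which is precisely \eqref{gfBid}. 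This is exactly the cancellation of the common factor $1/(R;R)_{\infty}$ alluded to in the statement.

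There is really no serious obstacle here: the content has already been done in Theorem~\ref{decomp1} (the bijection and weight factorization) and in Theorem~\ref{gf32strict} (the two product formulas, which are in turn the $k=3$ instances of \eqref{gfEk} and \eqref{gfSk} of Theorem~\ref{main1}). The only point requiring a word of care is that summing a multiplicative identity over a Cartesian product legitimately factors as a product of sums — this is valid provided the series converge formally as power series in the variables, which holds in the ring of formal power series since for fixed total degree only finitely many partitions contribute. One should also note that the nonnegativity of $\#a,\#b,\dots$ together with $R=abcdef$ being the "bookkeeping" variable makes the factorization $1/(R;R)_\infty$ invertible as a formal power series, so the division step is justified. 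Hence the corollary follows immediately, and the proof can be stated in two or three lines.

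Alternatively, one could bypass $\psi_3$ and simply quote the $k=3$ case of \eqref{gfDSk} from Theorem~\ref{main1} directly, since \eqref{gfDSk} already records $\sum_{\pi\in\mathcal{DS}^k}\omega^k_\pi = (-x_k,-y_k;w_k)_\infty/(z_k;w_k)_\infty$; one then only needs to check that under $a_1=a,a_2=b,a_3=c,b_1=d,b_2=e,b_3=f$ we have $z_3=abc$, $w_3=abcdef=R$, $x_3=a+ab$, $y_3=abc(d+de)=abcd+abcde$, and substitute. I would present the short $\psi_3$-based derivation as the main proof since it is the one flagged in the surrounding text, and perhaps remark parenthetically that it is also the $k=3$ instance of \eqref{gfDSk}.
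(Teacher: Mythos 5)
Your proposal is correct and matches the paper's own derivation: the paper likewise obtains \eqref{gfBid} by combining Theorem~\ref{decomp1} for $k=3$ with \eqref{gfE3} and \eqref{gf3id} and cancelling the common factor $1/(R;R)_{\infty}$, which is also just the $k=3$ specialization of \eqref{gfDSk}. Your variable identifications $z_3=abc$, $w_3=R$, $x_3=a+ab$, $y_3=abcd+abcde$ are all correct.
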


\begin{Def} Given a partition $\pi$,
for $i=1,2$,  let $o_i(\pi)$ (resp. $e_i(\pi)$) be  the number of odd-indexed (resp. even-indexed) parts that are $\equiv i \pmod 3$. And we recall that $|\pi_o|$ (resp. $|\pi_e|$) is the sum of odd-indexed (resp. even-indexed) parts of $\pi$. 
\end{Def}
Then upon taking $a=sx, b=tx/s, c=x/t, d=uy, e=vy/u, f=y/v$ in (\ref{gfBid}) we get the following:
\begin{theorem}\label{gfBeva}
\begin{align}\label{gfBevaid}
\sum\limits_{\pi\in\mathcal{DS}^3}x^{|\pi_o|}y^{|\pi_e|}s^{o_1(\pi)}t^{o_2(\pi)}u^{e_1(\pi)}v^{e_2(\pi)}&=\dfrac{(-sx-tx^2,-ux^3y-vx^3y^2;x^3y^3)_{\infty}}{(x^3;x^3y^3)_{\infty}}.
\end{align}
\end{theorem}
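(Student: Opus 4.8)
The plan is to obtain Theorem~\ref{gfBeva} as a direct specialization of Corollary~\ref{gfB}, exactly as the paragraph preceding the statement announces. First I would verify that the substitution
\[
a=sx,\quad b=\frac{tx}{s},\quad c=\frac{x}{t},\quad d=uy,\quad e=\frac{vy}{u},\quad f=\frac{y}{v}
\]
is an invertible change of variables on the appropriate domain, so that plugging it into the identity \eqref{gfBid} is legitimate. Then I would track how each of the monomials appearing in \eqref{gfBid} transforms: the product $abc$ becomes $sx\cdot\frac{tx}{s}\cdot\frac{x}{t}=x^3$, the full product $R=abcdef$ collapses to $x^3y^3$, the term $a=sx$ stays $sx$, and $ab=sx\cdot\frac{tx}{s}=tx^2$, so $-a-ab\mapsto -sx-tx^2$. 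Similarly $abcd=x^3\cdot uy=ux^3y$ and $abcde=x^3\cdot uy\cdot\frac{vy}{u}=vx^3y^2$, giving $-abcd-abcde\mapsto -ux^3y-vx^3y^2$. Substituting these into the right-hand side of \eqref{gfBid} immediately yields the claimed right-hand side of \eqref{gfBevaid}.

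The remaining (and genuinely combinatorial) step is to check that the left-hand side transforms into the stated statistic-tracking generating function, i.e.\ that under this substitution
\[
\omega^3_{\pi}(a,b,c,d,e,f)=x^{|\pi_o|}y^{|\pi_e|}s^{o_1(\pi)}t^{o_2(\pi)}u^{e_1(\pi)}v^{e_2(\pi)}.
\]
For this I would argue cell by cell. In an odd-indexed row of length $\ell$, the cyclic labelling $a,b,c,a,b,c,\dots$ contributes one factor $a=sx$ for each cell in a column $\equiv 1\pmod 3$, one factor $b=tx/s$ for each column $\equiv 2\pmod 3$, and one factor $c=x/t$ for each column $\equiv 0\pmod 3$; multiplying the labels of a full odd-indexed row of length $\ell$ telescopes the $s$ and $t$ powers and leaves $x^{\ell}\cdot s^{[\ell\equiv 1]}\cdot t^{[\ell\equiv 2]}$ (the residue of $\ell\bmod 3$ records whether the row contributes an extra $s$, an extra $t$, or neither). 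Summing the $x$-exponents over all odd-indexed rows gives $x^{|\pi_o|}$, and the surviving $s$- and $t$-powers count precisely the odd-indexed parts that are $\equiv 1$ and $\equiv 2\pmod 3$, namely $s^{o_1(\pi)}t^{o_2(\pi)}$. The identical computation with $(d,e,f)=(uy,vy/u,y/v)$ handles the even-indexed rows and produces $y^{|\pi_e|}u^{e_1(\pi)}v^{e_2(\pi)}$. Taking the product over all rows of $\pi$ and then summing over $\pi\in\mathcal{DS}^3$ converts \eqref{gfBid} into \eqref{gfBevaid}.

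The only place where any care is needed — and what I expect to be the sole (minor) obstacle — is the bookkeeping of the telescoping powers of $s$, $t$, $u$, $v$ within a single row: one must confirm that the partial products $a$, $ab$, $abc$, $abca$, $\dots$ along a row of length $\ell$ leave behind exactly the factor dictated by $\ell\bmod 3$ and no stray $s^{\pm1}$ or $t^{\pm1}$, which is immediate from $abc=x^3$ having no $s,t$ dependence but should be stated cleanly. Everything else is a routine substitution, and no new ideas beyond Corollary~\ref{gfB} are required. I would therefore present the proof as: (1) note the substitution is admissible; (2) compute the four products $a$, $ab$, $abcd$, $abcde$, together with $abc$ and $R$, under the substitution; (3) observe the row-by-row telescoping that identifies $\omega^3_{\pi}$ with the statistic monomial; (4) conclude by substituting into \eqref{gfBid}.
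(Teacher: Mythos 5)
Your proposal is correct and is exactly the paper's argument: the paper proves Theorem~\ref{gfBeva} in one line by making the substitution $a=sx$, $b=tx/s$, $c=x/t$, $d=uy$, $e=vy/u$, $f=y/v$ in \eqref{gfBid}. Your additional verification that $abc=x^3$, $R=x^3y^3$, and that the row-by-row telescoping turns $\omega^3_\pi$ into $x^{|\pi_o|}y^{|\pi_e|}s^{o_1(\pi)}t^{o_2(\pi)}u^{e_1(\pi)}v^{e_2(\pi)}$ is accurate and merely makes explicit what the paper leaves to the reader.
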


Next we take $x=y=q$ in \eqref{gfBevaid} and consider two pairs of dual specializations, both of them can be interpreted as partition theorems.

\begin{theorem}\label{bunewcomp-refine}
For integers $n,i,j\geq 0, m\in\{1,2\}$, the number of partitions enumerated by $A_m(n)$ that have exactly $i$ parts $\equiv 2 \pmod 3$ and $j$ parts $\equiv 1 \pmod 3$ equals the number of partitions enumerated by $C_m(n)$ that have exactly $i$ parts $\equiv 3m-1 \pmod 6$ and $j$ parts $\equiv 3m+1 \pmod 6$.
\end{theorem}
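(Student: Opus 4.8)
The plan is to deduce Theorem~\ref{bunewcomp-refine} from \eqref{gfBevaid} by two dual specializations. First I would set $x=y=q$ in \eqref{gfBevaid}, obtaining
\begin{align*}
\sum\limits_{\pi\in\mathcal{DS}^3}q^{|\pi|}s^{o_1(\pi)}t^{o_2(\pi)}u^{e_1(\pi)}v^{e_2(\pi)}
=\dfrac{\bigl(-sq-tq^2,\,-uq^4-vq^5;\,q^6\bigr)_{\infty}}{(q^3;q^6)_{\infty}},
\end{align*}
and then put $s=v=0$ for $m=1$ and $t=u=0$ for $m=2$. In either case the left-hand sum collapses onto the set $\mathcal{DS}^3_m$ appearing in the proof of Theorem~\ref{bu-k} --- the $\pi\in\mathcal{DS}^3$ whose odd-indexed parts avoid one nonzero class mod $3$ and whose even-indexed parts avoid the other --- which is precisely the family counted by $A_m$, via the equivalences recorded just before (condition ii of Definition~\ref{level3} $\Leftrightarrow$ membership in $\mathcal{DS}^3$, condition i $\Leftrightarrow$ conditions ii--iii of Definition~\ref{kAC}). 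So the left side becomes $\sum_{\pi\in A_m}q^{|\pi|}$ weighted by the two surviving variables.

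Next I would simplify the right-hand side via $1/(q^3;q^6)_{\infty}=(-q^3,-q^6;q^6)_{\infty}$ (Euler's theorem applied in the variable $q^3$). For $m=1$ the right side becomes $(-tq^2,-q^3,-uq^4,-q^6;q^6)_{\infty}$ and for $m=2$ it becomes $(-sq,-q^3,-vq^5,-q^6;q^6)_{\infty}$; each is manifestly the generating function for partitions into distinct parts occupying the residue classes $\{0,2,3,4\}$, resp.\ $\{0,1,3,5\}$, modulo $6$, i.e.\ distinct parts $\not\equiv\pm m\pmod 6$ --- the partitions counted by $C_m$ --- in which the two surviving variables mark exactly the parts $\equiv 3m-1\pmod 6$ and the parts $\equiv 3m+1\pmod 6$ (note that $3m-1,3m+1$ reduce mod $6$ to $2,4$ when $m=1$ and to $5,1$ when $m=2$).

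Finally I would match exponents. The crucial observation on the $A_m$ side is that parts $\equiv 2\pmod 3$ and parts $\equiv 1\pmod 3$ occur at complementary index parities, so whichever of $o_2,e_2$ (resp.\ $o_1,e_1$) survives the specialization literally counts all parts of $\pi$ in that class mod $3$. Hence the coefficient of $q^n$ times the two surviving variables raised to powers $i,j$ counts, on the left, the partitions of $n$ in $A_m$ with $i$ parts $\equiv 2\pmod 3$ and $j$ parts $\equiv 1\pmod 3$, and on the right, the partitions of $n$ in $C_m$ with $i$ parts $\equiv 3m-1\pmod 6$ and $j$ parts $\equiv 3m+1\pmod 6$; equating these coefficients proves the theorem. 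I expect the only real obstacle to be the bookkeeping --- keeping straight which of $s,t,u,v$ is tied to which residue class and which row parity, and confirming that the four surviving factors after the collapse are indexed by $\{0,3\}$ together with $\{3m-1,3m+1\}$ modulo $6$ rather than some other permutation; everything else is a direct substitution into an identity already established.
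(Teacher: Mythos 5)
Your proposal is correct and follows essentially the same route as the paper: specialize \eqref{gfBevaid} at $x=y=q$, set $s=v=0$ (for $m=1$) or $t=u=0$ (for $m=2$), rewrite $1/(q^3;q^6)_\infty=(-q^3,-q^6;q^6)_\infty$, and extract coefficients, with the same key observation that the vanishing of one of $o_l,e_l$ in each class makes the surviving statistic count \emph{all} parts in that residue class. The only point to be careful about, which you flag yourself, is that for $m=2$ the roles are swapped (one extracts $s^jv^i$, since $s$ marks parts $\equiv 1\equiv 3m+1\pmod 6$ and $v$ marks parts $\equiv 5\equiv 3m-1\pmod 6$), exactly as in the paper.
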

\begin{proof}
When $m=1$, condition i becomes $\pi_{2i+1}\not\equiv 1 \pmod 3$ and $\pi_{2i}\not\equiv 2 \pmod 3$. Or equivalently, $o_1(\pi)=e_2(\pi)=0$. This means we should put $s=v=0$ in \eqref{gfBevaid} to get
\begin{align}
\label{s=v=0}
\sum\limits_{\pi\in\mathcal{DS}^3 \atop o_1(\pi)=e_2(\pi)=0}t^{o_2(\pi)}u^{e_1(\pi)}q^{|\pi|}&=\dfrac{(-tq^2,-uq^4;q^6)_{\infty}}{(q^3;q^6)_{\infty}}=(-tq^2,-q^3,-uq^4,-q^6;q^6)_{\infty},
\end{align}
and extract the coefficients of $t^iu^jq^n$ on both sides to prove the claim for $m=1$. And the case with $m=2$ means $\pi_{2i+1}\not\equiv 2 \pmod 3$ and $\pi_{2i}\not\equiv 1 \pmod 3$, which leads to putting  $t=u=0$ in \eqref{gfBevaid} to get
\begin{align}
\label{t=u=0}
\sum\limits_{\pi\in\mathcal{DS}^3 \atop o_2(\pi)=e_1(\pi)=0}s^{o_1(\pi)}v^{e_2(\pi)}q^{|\pi|}&=\dfrac{(-sq,-vq^5;q^6)_{\infty}}{(q^3;q^6)_{\infty}}=(-sq,-q^3,-vq^5,-q^6;q^6)_{\infty}.
\end{align}
Extracting coefficients of $s^jv^iq^n$ completes the proof.
\end{proof}
\begin{remark}
Theorem~\ref{bunewcomp-refine} refines \eqref{bunewcomp} and could also be derived from Theorem~2.5 in \cite{bu1} by sending $N$ to infinity. Indeed, the coefficient of
$t^iu^jq^n$ in the expansion of \eqref{s=v=0} is exactly $C_{1,\infty}(n,i,j)$ as in \cite{bu1}, while the coefficient of $s^iv^jq^n$ in the expansion of \eqref{t=u=0} is exactly $C_{2,\infty}(n,i,j)$. 
\end{remark}
The next theorem appears to be a new companion that cannot be deduced from the existing results.
\begin{theorem}\label{thm: FZ-inf}
For integers $n,i,j\geq 0, m\in\{1,2\}$, let $D_m^{\text{I}}(i,j,n)$ be 
the number of partitions of $n$ into distinct parts $\not\equiv -m \pmod 3$ that have exactly $i$ odd-indexed parts $\equiv m \pmod 3$ and $j$ even-indexed parts $\equiv m \pmod 3$, 
and $D_m^{\text{II}}(i,j,n)$  the number of partitions of $n$ into distinct parts $\not\equiv -m \pmod 3$ that have exactly $i$ parts $\equiv m \pmod 6$ and $j$ parts $\equiv m+3 \pmod 6$. Then
$$
D_m^{\text{I}}(i,j,n)=D_m^{\text{II}}(i,j,n).
$$
\end{theorem}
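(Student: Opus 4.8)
The plan is to specialize the master identity \eqref{gfBevaid} twice, once for each residue target, and read off the two claimed equidistributions by comparing coefficients of $s^iv^jq^n$ (or $t^iu^jq^n$). Fix $m\in\{1,2\}$ and write $\overline m=3-m$ for the ``other'' nonzero residue mod $3$. The key point is that the left side of \eqref{gfBevaid}, after the substitution $x=y=q$, is a generating function over \emph{all} of $\mathcal{DS}^3$ that simultaneously tracks $|\pi|$, the residues of odd-indexed parts, and the residues of even-indexed parts; restricting attention to partitions with no parts $\equiv -m\pmod 3$ amounts to killing the variable that records residue $\overline m$ on whichever indices it can occur. Concretely, for $m=1$ I would set $t=u=0$ in \eqref{gfBevaid} with $x=y=q$; this forces $o_2(\pi)=e_2(\pi)=0$, i.e.\ no part at all is $\equiv 2\pmod 3$, and yields
\[
\sum_{\substack{\pi\in\mathcal{DS}^3\\ o_2(\pi)=e_2(\pi)=0}} q^{|\pi|}s^{o_1(\pi)}v^{e_1(\pi)}
=\frac{(-sq,-vq^4;q^6)_\infty}{(q^3;q^6)_\infty}
=(-sq,-q^3,-vq^4,-q^6;q^6)_\infty,
\]
where the last step is Euler's $1/(q^3;q^6)_\infty=(-q^3;q^6)_\infty$. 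For $m=2$ one instead sets $s=v=0$ (killing residue $1$ everywhere), producing $(-tq^2,-q^3,-vq^5,-q^6;q^6)_\infty$ after the analogous simplification.

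Next I would identify the two sides of each resulting identity with $D_m^{\mathrm I}$ and $D_m^{\mathrm{II}}$. For the left side: a partition $\pi\in\mathcal{DS}^3$ with no part $\equiv -m\pmod 3$ is, after recalling $\mathcal{DS}^3=\mathcal D\cap\mathcal S^3$ and that the condition ``no part $\equiv\overline m$'' makes all parts automatically distinct in the relevant blocks, exactly a partition of $n$ into distinct parts $\not\equiv -m\pmod 3$; the exponent of the surviving odd-index variable is the number of odd-indexed parts $\equiv m\pmod 3$ and the exponent of the surviving even-index variable is the number of even-indexed parts $\equiv m\pmod 3$. Hence the coefficient of the appropriate monomial $q^n(\cdot)^i(\cdot)^j$ on the left is $D_m^{\mathrm I}(i,j,n)$. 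For the right side, the product $(-q^{\overline m}\,\cdot\,, -q^3, -q^{\overline m+3}\,\cdot\,, -q^6;q^6)_\infty$ (written with the correct first exponent, $m$ for $m=2$ giving $q^2$, and $\overline m=2$ for $m=1$ giving $q$ — here I would be careful to match exponents so that the parts are precisely those $\equiv m,3,m+3,6\pmod 6$, i.e.\ the distinct parts $\not\equiv -m\pmod 3$) is the generating function for partitions into distinct parts from those four residue classes mod $6$, with one variable marking the parts $\equiv m\pmod 6$ and the other marking those $\equiv m+3\pmod 6$; its coefficient is $D_m^{\mathrm{II}}(i,j,n)$. Equating coefficients gives $D_m^{\mathrm I}(i,j,n)=D_m^{\mathrm{II}}(i,j,n)$.

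The only real obstacle is bookkeeping: making sure the four residue classes mod $6$ coming out of the product genuinely coincide with ``distinct parts $\not\equiv -m\pmod 3$'' (they should, since $\{m,m+3\}\cup\{3,6\}=\{m,3,m+3,6\}$ exhausts the residues mod $6$ that are $\equiv m$ or $0$ mod $3$, i.e.\ $\equiv\pm m$ excluded correctly once $-m\equiv\overline m$), and that the variable attached to ``$i$'' on the left (odd-indexed parts $\equiv m\pmod 3$) is matched with the variable attached to ``$i$'' on the right (parts $\equiv m\pmod 6$), not the other one. Because the left-hand count in \eqref{gfBevaid} ranges over $\mathcal{DS}^3$ with its intrinsic odd/even indexing, while the right-hand product has no a priori notion of indexing, the statement $D_m^{\mathrm I}=D_m^{\mathrm{II}}$ is genuinely the nontrivial content; everything else is substitution and Euler's identity. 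I would close by noting briefly why this does not follow from Theorem~\ref{bunewcomp-refine} or from \cite{bu1}: those track residues $2$ and $1$ mod $3$ under the \emph{modular}-side reading, whereas here both marked statistics sit on the \emph{same} residue $m$ but on different index parities, so the pair of refinements is genuinely new.
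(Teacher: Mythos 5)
Your overall strategy is exactly the paper's: specialize \eqref{gfBevaid} at $x=y=q$, kill the two variables recording the forbidden residue, rewrite $1/(q^3;q^6)_\infty=(-q^3,-q^6;q^6)_\infty$ by Euler, and compare coefficients. However, the substitutions you actually name are wrong, and the displayed identities are false as written. In \eqref{gfBevaid} the variable $s$ marks $o_1$, $t$ marks $o_2$, $u$ marks $e_1$, and $v$ marks $e_2$; so ``no part $\equiv 2\pmod 3$'' (the $m=1$ case) means $o_2(\pi)=e_2(\pi)=0$, which requires setting $t=v=0$, not $t=u=0$. Setting $t=u=0$ instead forces $o_2(\pi)=e_1(\pi)=0$, which is the restriction relevant to the $m=2$ case of Theorem~\ref{bunewcomp-refine} (compare \eqref{t=u=0}), and it yields $(-sq,-vq^5;q^6)_\infty/(q^3;q^6)_\infty$, not the product $(-sq,-vq^4;q^6)_\infty/(q^3;q^6)_\infty$ you wrote (which is moreover inconsistent: you cannot have a surviving $v$ marking $e_1$). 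Likewise for $m=2$ you need $s=u=0$ (killing residue $1$ on both index parities), not $s=v=0$; your stated product $(-tq^2,-q^3,-vq^5,-q^6;q^6)_\infty$ is the correct one for $s=u=0$ but contradicts your own substitution $v=0$.

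These are symbol-level slips rather than a missing idea --- your verbal description (``kill the variable that records residue $3-m$ on whichever indices it can occur'') is the right prescription, and your identification of the resulting products with $D_m^{\mathrm{II}}$, including the check that $\{m,3,m+3,6\}$ exhausts the residues mod $6$ not congruent to $-m$ mod $3$, matches the paper. Once you replace $t=u=0$ by $t=v=0$ and $s=v=0$ by $s=u=0$ (i.e., use \eqref{t=v=0} and \eqref{s=u=0}), the argument is correct and coincides with the paper's proof.
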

\begin{proof}
When $m=1$, no parts can be $\equiv -1\equiv 2 \pmod 3$ means that $o_2(\pi)=e_2(\pi)=0$, so we put $t=v=0$ in \eqref{gfBevaid} to get 
\begin{align}
\label{t=v=0}
\sum\limits_{\pi\in\mathcal{DS}^3 \atop o_2(\pi)=e_2(\pi)=0}s^{o_1(\pi)}u^{e_1(\pi)}q^{|\pi|}&=\dfrac{(-sq,-uq^4;q^6)_{\infty}}{(q^3;q^6)_{\infty}}=(-sq,-q^3,-uq^4,-q^6;q^6)_{\infty},
\end{align}
and compare coefficients of $s^iu^jq^n$ on both sides to get the claim. Similar arguments apply for $m=2$ upon putting $s=u=0$ in \eqref{gfBevaid} and getting 
\begin{align}
\label{s=u=0}
\sum\limits_{\pi\in\mathcal{DS}^3 \atop o_1(\pi)=e_1(\pi)=0}t^{o_2(\pi)}v^{e_2(\pi)}q^{|\pi|}&=\dfrac{(-tq^2,-vq^5;q^6)_{\infty}}{(q^3;q^6)_{\infty}}=(-tq^2,-q^3,-vq^5,-q^6;q^6)_{\infty}.
\end{align}
Comparing coefficients of $t^iv^jq^n$ on both sides completes the proof.
\end{proof}
\begin{example}
When $m=1, n=17$, we list out partitions of 
\begin{itemize}
\item type I: distinct, no parts $\equiv 2 \pmod 3$, $i$ odd-indexed parts $\equiv 1 \pmod 3$ and $j$ even-indexed parts $\equiv 1 \pmod 3$,
\item type II: distinct, no parts $\equiv 2 \pmod 3$, $i$ parts $\equiv 1 \pmod 6$, $j$ parts $\equiv 4 \pmod 6$.
\end{itemize}
Both types have the same count for each choice of $(i,j)$, as claimed by the last theorem.
\begin{table}[htbp]\caption{}
\centering
\begin{tabular}{|c|c|c|}
\hline
$(i,j)$ & type I & type II\\
\hline
(2,0) & (13,3,1), (10,6,1), (7,6,4) & (13,3,1), (9,7,1), (7,6,3,1)\\
\hline
(1,1) & (16,1), (13,4), (12,4,1), (10,7) & (16,1), (13,4), (12,4,1), (10,7)\\
 & (10,4,3), (9,7,1), (7,6,3,1) & (10,6,1), (9,4,3,1), (7,6,4)\\
\hline
(0,2) & (9,4,3,1) & (10,4,3)\\
\hline
\end{tabular}
\label{tab1}
\end{table}
\end{example}

\subsection{Bounded Case}
Similar to the infinite case, we find the generating functions for $\mathcal{E}_{N,\infty}^3$ and $\mathcal{S}_{N,\infty}^{3}$ first, then use them together with the $k=3$ case of Corollary~\ref{decomp1N} to deduce the bounded version of Corollary~\ref{gfB}. 

\begin{proposition} Let $R=abcdef$.
For any non-negative integer $N$,
\begin{align}
\label{gfE3b}\sum\limits_{\pi\in \mathcal{E}^3_{N,\infty}}\omega^3_{\pi}(a,b,c,d,e,f) &=\dfrac{1}{(R;R)_{\left\lfloor{N}/{3}\right\rfloor}}.
\end{align}
\end{proposition}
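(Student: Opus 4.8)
The plan is to prove \eqref{gfE3b} by exactly the same bijective device used for the unbounded identity \eqref{gfE3}, now tracking the bound on the largest part. Recall that an element of $\mathcal{E}^3$ is a partition all of whose parts are multiples of $3$, each occurring an even number of times; writing such a partition as $\pi=(\pi_1,\pi_1,\pi_3,\pi_3,\ldots,\pi_{2l-1},\pi_{2l-1})$ (so $\pi_1\geq\pi_3\geq\cdots$ after collapsing the doubled parts), the map $\pi\mapsto\pi^*:=(\pi_1/3,\pi_3/3,\ldots,\pi_{2l-1}/3)$ is a bijection onto the set $\mathcal P$ of ordinary partitions, with $\omega^3_\pi=R^{|\pi^*|}$ where $R=abcdef$. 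This is precisely the bijection already invoked in the proof of Theorem~\ref{main1}.

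First I would observe that the constraint ``largest part of $\pi$ is $\leq N$'' translates under $\pi\mapsto\pi^*$ into ``largest part of $\pi^*$ is $\leq \lfloor N/3\rfloor$'': since every part of $\pi$ is a multiple of $3$, the largest part is $3\pi^*_1\leq N$ if and only if $\pi^*_1\leq N/3$, i.e.\ $\pi^*_1\leq\lfloor N/3\rfloor$ because $\pi^*_1$ is an integer. Hence $\psi:\pi\mapsto\pi^*$ restricts to a weight-preserving bijection from $\mathcal{E}^3_{N,\infty}$ onto $\mathcal P_{\lfloor N/3\rfloor,\infty}$, the set of ordinary partitions with largest part $\leq\lfloor N/3\rfloor$.

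Then I would finish by the standard generating-function identity for partitions into parts of size at most a fixed integer $L$: $\sum_{\lambda:\lambda_1\leq L} q^{|\lambda|}=1/(q;q)_{L}=\prod_{i=1}^{L}(1-q^i)^{-1}$. Substituting $q\mapsto R$ and $L=\lfloor N/3\rfloor$, and using $\omega^3_\pi=R^{|\pi^*|}$ under the bijection, gives
\begin{align*}
\sum_{\pi\in\mathcal{E}^3_{N,\infty}}\omega^3_\pi(a,b,c,d,e,f)=\sum_{\substack{\lambda\in\mathcal P\\ \lambda_1\leq\lfloor N/3\rfloor}}R^{|\lambda|}=\frac{1}{(R;R)_{\lfloor N/3\rfloor}},
\end{align*}
which is \eqref{gfE3b}.

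There is essentially no obstacle here: the only point requiring a moment's care is the floor in the translation of the bound, which is handled by the integrality of $\pi^*_1$ as noted above; everything else is a direct specialization of the bijection and generating function already established in the proof of Theorem~\ref{main1}. (The edge cases $N=0,1,2$ give $\lfloor N/3\rfloor=0$, the empty product $1$ on the right, and only the empty partition on the left, consistent with the stated convention.)
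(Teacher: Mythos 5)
Your proof is correct and follows essentially the same route as the paper: the paper likewise reduces to the bijection $\pi\mapsto\pi^*$ from the proof of the unbounded identity \eqref{gfE3} and justifies the floor by noting that $\mathcal{E}^3_{3l,\infty}=\mathcal{E}^3_{3l+1,\infty}=\mathcal{E}^3_{3l+2,\infty}$, which is exactly your observation that the bound $N$ on $\pi$ translates to the bound $\lfloor N/3\rfloor$ on $\pi^*$.
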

\begin{proof}
One thing to be noted is that, since partitions in $\mathcal{E}^3$ have only parts that are $\equiv 0 \pmod 3$, so one has $\mathcal{E}^3_{3l,\infty}=\mathcal{E}^3_{3l+1,\infty}=\mathcal{E}^3_{3l+2,\infty}$, which explains $\left\lfloor{N}/{3}\right\rfloor$. Otherwise the proof goes similarly as for (\ref{gfE3}).
\end{proof}

For any non-negative integer $N$ and $\mu\in \{0, 1,2\}$, we consider the generating function
\begin{align*}
S_{3N+\mu}^3 &:=S_{3N+\mu}^3(a,b,c,d,e,f):=\sum\limits_{\pi\in\mathcal{S}_{3N+\mu,\infty}^3}
\omega^3_{\pi},\\
DS_{3N+\mu}^3 &:=DS_{3N+\mu}^3(a,b,c,d,e,f):=\sum\limits_{\pi\in\mathcal{DS}_{3N+\mu,\infty}^3}
\omega^3_{\pi}.
\end{align*}
\begin{theorem} \label{gfS_3N012} Let $R=abcdef$.
We have 
\begin{align}
\label{gfDS3b3N}
S_{3N}^3 &=
\sum_{T}R^{\binom{t_1}{2}+\binom{t_2}{2}}F(T)
,\\
\label{gfDS3b3N+1}
S_{3N+1}^3 &=S_{3N}^3(a,b,c,d,e,f)+a(abc)^N
S_{3N}^3(d,e,f,a,b,c),\\
\label{gfDS3b3N+2}
S_{3N+2}^3 &=(1+a+ab)
\sum_{T}R^{\binom{t_1+1}{2}+\binom{t_2}{2}}F(T),\\
\label{gfDS}
DS_{3N+\mu}^3 &=(R;R)_N S_{3N+\mu}^3\quad\text{for}\quad  \mu\in \{0,1,2\},
\end{align}
where the summation $\sum_T$ is over all quadruples $T:=(t_1,t_2,t_3,t_4)\in \N^4$ such that
$$
\sum_{j=1}^4t_j=N, \text{ and }
F(T):=\dfrac{(a+ab)^{t_1}(abcd+abcde)^{t_2}(abc)^{t_3}}{(R;R)_{t_1}(R;R)_{t_2}(R;R)_{t_3}(R;R)_{t_4}}.
$$
\end{theorem}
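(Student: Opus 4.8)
The plan is to compute $S^3_{3N+\mu}$ by the same vertical-block dissection used in the proof of Theorem~\ref{main1}, but now keeping track of the extra constraint that the largest part is at most $3N+\mu$. Recall from that proof that a partition $\pi\in\mathcal{S}^3$ is decomposed vertically into blocks of width $3$ of the four types I, II, III, IV of Figure~\ref{sixtypes}, whose respective weights are $(a+ab)R^m$, $(abcd+abcde)R^m$, $(abc)R^m$ and $R^{m+1}$, where $R=abcdef$ and $2m+1$ or $2m+2$ is the height of the block. The generating function for all of $\mathcal{S}^3$ was the product of the four factors $(-a-ab;R)_\infty$, $(-abcd-abcde;R)_\infty$, $1/(abc;R)_\infty$, $1/(R;R)_\infty$. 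The point now is that bounding the largest part by $3N$ means exactly that the \emph{total} number of blocks (counted with multiplicity, where a height-$h$ block contributes $\lceil h/2\rceil$ to a running count of ``$3$-wide strips'' reading from the bottom) is at most $N$; more precisely, writing the relevant truncations, I would set up a sum over $(t_1,t_2,t_3,t_4)$ where $t_j$ records how many blocks of type $j$ are used, subject to $t_1+t_2+t_3+t_4=N$ (the remaining ``slots'' being filled by type-IV blocks of minimal height, which contribute the empty weight).

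First I would make precise the combinatorial meaning of the decomposition in the bounded setting: a partition in $\mathcal{S}^3_{3N,\infty}$ corresponds to choosing, for each of $N$ consecutive height-$2$ horizontal strips (from the top), which of the four block types occupies it, together with a partition controlling the repetitions within each type. Types I and II must have strictly decreasing heights among themselves, contributing the $q$-binomial-like numerator factors $R^{\binom{t_1}{2}}$ and $R^{\binom{t_2}{2}}$ together with $1/(R;R)_{t_1}$ and $1/(R;R)_{t_2}$; types III and IV allow repetition, contributing $1/(R;R)_{t_3}$ and $1/(R;R)_{t_4}$ with no extra binomial power. Summing the products $(a+ab)^{t_1}(abcd+abcde)^{t_2}(abc)^{t_3}R^{\binom{t_1}{2}+\binom{t_2}{2}}/\prod_j (R;R)_{t_j}$ over all $T$ with $\sum t_j=N$ yields exactly \eqref{gfDS3b3N}. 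For \eqref{gfDS3b3N+2}, allowing the largest part to be $3N+1$ or $3N+2$ means the topmost type-I or type-II (or the prefix of a type-III/IV) block may be enlarged by one or two extra cells in its top row; bookkeeping this extra top row is what turns $\binom{t_1}{2}$ into $\binom{t_1+1}{2}$ and introduces the prefactor $(1+a+ab)$, corresponding to the three possibilities for that truncated top row (empty, one cell labelled $a$, two cells labelled $a,b$).

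For \eqref{gfDS3b3N+1}, I would argue directly on diagrams: a partition with largest part $\le 3N+1$ either has largest part $\le 3N$, or its top row has length $\equiv 1\pmod 3$, i.e. exactly $3N'+1$ cells for some $N'\le N$; in the latter case removing that single top cell (labelled $a$, contributing the factor $a$) and then reflecting the remaining diagram's labels by the swap $(\star)$ of rows — equivalently reading the $a,b,c$ rows as $d,e,f$ rows — identifies what remains with a partition counted by $S^3_{3N}(d,e,f,a,b,c)$, after pulling out the common factor $(abc)^N$ coming from the $N$ full width-$3$ columns below; the bijectivity is the same row-parity argument as in Theorem~\ref{decomp1}. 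Finally \eqref{gfDS} follows, exactly as \eqref{gfDSk} followed from \eqref{gfEk} and \eqref{gfSk}, by invoking the $k=3$ bounded decomposition of Corollary~\ref{decomp1N} together with \eqref{gfE3b}: since $\psi_{3,3N+\mu}$ is a weight-preserving bijection $\mathcal{S}^3_{3N+\mu,\infty}\to\mathcal{DS}^3_{3N+\mu,\infty}\times\mathcal{E}^3_{3N+\mu,\infty}$ and $\sum_{\pi\in\mathcal{E}^3_{3N+\mu,\infty}}\omega^3_\pi=1/(R;R)_N$, we get $S^3_{3N+\mu}=DS^3_{3N+\mu}\cdot 1/(R;R)_N$, which rearranges to \eqref{gfDS}.

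The main obstacle I anticipate is the careful bookkeeping in \eqref{gfDS3b3N} and \eqref{gfDS3b3N+2}: one must check that the constraint ``largest part $\le 3N$'' translates \emph{exactly} into $\sum_{j=1}^4 t_j=N$ with no off-by-one errors in how partial (non-full-height) blocks at the top are counted, and that the four generating-function factors truncate to precisely $R^{\binom{t_i}{2}}/(R;R)_{t_i}$ (for the distinct types) and $1/(R;R)_{t_i}$ (for the repeatable types) — the identity $\sum_{t\ge 0} z^t R^{\binom t2}/(R;R)_t \cdot (\text{truncation at }N)$ is what must be pinned down. Once the dissection is correctly truncated, \eqref{gfDS3b3N+1} and \eqref{gfDS} are comparatively routine.
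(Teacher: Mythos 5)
Your overall strategy is the paper's: dissect the diagram into vertical strips of width $3$, classify them into the four types of Figure~\ref{sixtypes}, use $t_4$ as a slack variable so that $\sum_j t_j=N$, and obtain \eqref{gfDS} from Corollary~\ref{decomp1N} together with \eqref{gfE3b}. Identities \eqref{gfDS3b3N} and \eqref{gfDS} are handled correctly in substance. However, your translation of the bound is wrong as stated: a block \emph{is} a vertical strip of width $3$, so each block contributes exactly $1$ to the strip count $\lceil \pi_1/3\rceil\le N$, not $\lceil h/2\rceil$ for a height-$h$ block. Under your counting, $(3,3,3,3)$ (one type-IV block of height $4$) would require $N\ge 2$ even though its largest part is $3$; likewise the picture of ``$N$ consecutive height-$2$ horizontal strips from the top'' conflates rows with columns. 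This is not an off-by-one to be pinned down later; it is the wrong invariant, and the correct one is simply the number of width-$3$ columns of the diagram.

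The two genuinely delicate cases are also where your argument breaks. For \eqref{gfDS3b3N+1}, the dichotomy must be ``largest part exactly $3N+1$'' versus ``largest part $\le 3N$''; your cases ``largest part $\le 3N$'' and ``top row of length $3N'+1$ for some $N'\le N$'' overlap (e.g.\ $(4)$ with $N=2$ lies in both) and would produce a sum over $N'$ rather than the single term $a(abc)^N S^3_{3N}(d,e,f,a,b,c)$. In the second case one removes the \emph{entire} first row of $3N+1$ cells (weight $a(abc)^N$, all in row $1$, not ``columns below''); only then do all remaining rows shift parity, forcing the relabelling $a,b,c\leftrightarrow d,e,f$ — removing a single cell triggers no swap. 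For \eqref{gfDS3b3N+2}, the prefactor $(1+a+ab)$ is not attached to the event ``largest part exceeds $3N$'': it records the $0$, $1$ or $2$ cells of row $1$ beyond the largest multiple of $3$ below $\pi_1$, for \emph{any} $\pi_1\le 3N+2$ (by $3$-strictness no other row reaches those columns). After stripping them, the remaining partition has largest part $\equiv 0\pmod 3$ and $\le 3N$, which is precisely the condition that no type-I block has height $1$; the heights of the $t_1$ type-I blocks are then distinct and $\ge 3$, and $\sum_{m_1>\cdots>m_{t_1}\ge 1}R^{\sum m_i}=R^{\binom{t_1+1}{2}}/(R;R)_{t_1}$ is what replaces $R^{\binom{t_1}{2}}/(R;R)_{t_1}$. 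Your proposal asserts that the extra top row ``is what turns $\binom{t_1}{2}$ into $\binom{t_1+1}{2}$'' without supplying this reason, and as written it would also fail to explain why the ``$1$'' branch of the prefactor multiplies the \emph{modified} sum rather than $S^3_{3N}$ itself.
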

\begin{proof}
Given $\pi\in\mathcal{S}^3_{3N,\infty}$, we can  decompose $\pi$ vertically
 into four types of blocks with width $3$ as in Fig.~\ref{sixtypes} and obtain a quadruple 
  $(\pi^{\sc I}, \pi^{\sc{II}}, \pi^{\sc{III}}, \pi^{\sc{IV}})$,  where $\pi^{\sc k}$ is 
  the partition obtained by assembling all the blocks of type $k$ in $\pi$. 
  Clearly the lengths of blocks of type I and II must be distinct, while those of type III and IV could be 
  repeated. Moreover the number of blocks $\pi^k$  is bounded by $N$. 
  Let $\mathcal{S}^3_{3N,\infty}(k)$ be the subset of partitions in $\mathcal{S}^3_{3N,\infty}$ whose blocks are exclusively of type $k$.  It is easy to 
 compute the generating functions   for   partitions in each $\mathcal{S}^3_{3N,\infty}(k)$ with a fixed  or bounded number of blocks.
\begin{itemize}
\item[I.] The generating function of partitions in $\mathcal{S}^3_{3N,\infty}(\text{I})$ with  
$t_1$ blocks of distinct lengths  is 
$$
\dfrac{(a+ab)^{t_1}}{(R;R)_{t_1}}R^{\binom{t_1}{2}}, \qquad 0\leq  t_1\leq N.
$$ 
\item[II.] The generating function of partitions in $\mathcal{S}^3_{3N,\infty}(\text{II})$ with  
$t_2$ blocks of distinct lengths  is 
$$
\dfrac{(abcd+abcde)^{t_2}}{(R;R)_{t_2}}R^{\binom{t_2}{2}}, \qquad 0\leq  t_2\leq N.
$$ 
\item[III.] The generating function of partitions in $\mathcal{S}^3_{3N,\infty}(\text{III})$ with  
$t_3$ blocks is 
$$
\dfrac{(abc)^{t_3}}{(R;R)_{t_3}}, \qquad 0\leq  t_3\leq N.
$$ 
\item[IV.] The generating function of partitions in $\mathcal{S}^3_{3N,\infty}(\text{IV})$ with  at most 
$t_4$ blocks  is 
$$
\dfrac{1}{(R;R)_{t_4}}, \qquad 0\leq  t_4\leq N.
$$ 
\end{itemize}
Putting all four types of blocks together leads to
 the constraint $\sum_{j=1}^4t_j=N$, in which case there are {\bf exactly} $t_1$ blocks of type I, $t_2$ blocks of type II, $t_3$ blocks of type III, and {\bf at most} $t_4$ blocks of type IV. 
 Thus the generating function of $\mathcal{S}^3_{3N,\infty}$ is
$$
\sum\limits_{t_1,t_2,t_3,t_4=0\atop t_1+t_2+t_3+t_4=N}^N \dfrac{(a+ab)^{t_1}R^{\binom{t_1}{2}}}{(R;R)_{t_1}}\dfrac{(abcd+abcde)^{t_2}R^{\binom{t_2}{2}}}{(R;R)_{t_2}}\dfrac{(abc)^{t_3}}{(R;R)_{t_3}}\dfrac{1}{(R;R)_{t_4}}\\{},
$$
which establishes \eqref{gfDS3b3N}. Next, for $S_{3N+1}^3$ and $S_{3N+2}^3$, the possibility of having $3N+1$ or $3N+2$ as the largest part will affect the generating function for $\mathcal{S}^3_{3N+\mu,\infty}(\text{I})$. More precisely, we have
\begin{itemize}
\item The generating function of partitions in $\mathcal{S}^3_{3N+1,\infty}(\text{I})$ with  
$t_1$ blocks of distinct lengths $\geq 3$ is composed of three parts, according to residue class of the largest part modulo $3$:
\begin{align*}
\text{largest part}\not\equiv 2 \pmod 3&: \quad (1+a)\dfrac{(a+ab)^{t_1}}{(R;R)_{t_1}}R^{\binom{t_1+1}{2}}, \qquad 0\leq  t_1\leq N.\\
\text{largest part}\equiv 2 \pmod 3 &: \quad ab\dfrac{(a+ab)^{t_1}}{(R;R)_{t_1}}R^{\binom{t_1+1}{2}}, \qquad 0\leq  t_1\leq N-1.
\end{align*}
Note that in the second case above, $t_1$ cannot be $N$, otherwise the largest part will be $3N+2>3N+1$. For the same reason we should have $0\leq t_1+t_2+t_3+t_4\leq N-1$ in this case. Therefore, instead of messing around with this change on the upper limit of the summation, we choose to analyse the largest part ($=3N+1$ or $\leq 3N$), and get \eqref{gfDS3b3N+1}, note the change of variables in the second term, due to the fact that the first part of the remaining parts is labelled as $d,e,f,\ldots$ in stead.
\item The generating function of partitions in $\mathcal{S}^3_{3N+2,\infty}(\text{I})$ with  
$t_1$ blocks of distinct lengths $\geq 3$ is 
$$
(1+a+ab)\dfrac{(a+ab)^{t_1}}{(R;R)_{t_1}}R^{\binom{t_1+1}{2}}, \qquad 0\leq  t_1\leq N,
$$ 
which leads to \eqref{gfDS3b3N+2}.
\end{itemize}
Finally, we combine \eqref{gfE3b}$\sim$\eqref{gfDS3b3N+2} with Corollary~\ref{decomp1N} to get \eqref{gfDS}.
\end{proof}

\begin{remark}\label{bdtoinfrmk}
To see the connection between the infinite case and the bounded case analytically, one simply applies one of the most elementary series-product identities due to Euler, for each finite sum in the formula. For instance, the following identity can be deduced from \cite[(2.2.6)]{andtp} by taking $t=a+ab, q=R$.
$$
\sum\limits_{t_1=0}^{\infty}\dfrac{(a+ab)^{t_1}R^{\binom{t_1}{2}}}{(R;R)_{t_1}}=(-a-ab;R)_{\infty}.
$$
\end{remark}

For nonnegative integers $(n_1, \ldots, n_m)$ suth that $N=n_1+\cdots+ n_m$, 
we define the $q$-multinomial coefficients:
$$
\qbin{N}{n_1,\ldots, n_m}{q}:=\dfrac{(q;q)_{N}}{ (q;q)_{n_1}\ldots(q;q)_{n_m}}.
$$
Let $\lceil x \rceil$ and $\lfloor x \rfloor$ be the usual ceiling and floor functions for $x\in \R$.
Now, in \eqref{gfDS}
with the same substitution for the parameters $(a, b, c, d, e,f)$ as in (\ref{gfBevaid}), namely 
$$
a=sx, \quad b=tx/s,  \quad c=x/t,  \quad d=uy,  \quad e=vy/u, \quad  f=y/v,
$$
 we arrive at the bounded version of Theorem~\ref{gfBeva}. 
\begin{theorem}\label{gfBeva-bounded}
 For $N\geq 0$ and $\mu\in \{0,1, 2\}$, define the generating function
 $$
 S_\mu:=S_\mu(s,t,u,v,x,y)=\sum\limits_{\pi\in\mathcal{DS}_{3N+\mu}^3}x^{|\pi_o|}y^{|\pi_e|}s^{o_1(\pi)}t^{o_2(\pi)}u^{e_1(\pi)}v^{e_2(\pi)}.
 $$
 Then,  we have 
\begin{align}\label{gfBevaid-bounded-02}
 S_\mu&= \left(1+\frac{\mu}{2}(sx+tx^2)\right)\sum_{(i,j,k,l)\in \N^4\atop i+j+k+l=N} \qbin{N}{i,j,k,l}{x^3y^3}(sx+tx^2)^{i}\\
\nonumber
&\hspace{4cm}\times (ux^3y+vx^3y^2)^{j}x^{3k}
(x^3y^3)^{\binom{i+{\mu}/{2}}{2}+\binom{j}{2}},\quad (\mu=0,2),\\
S_1&=S_0(s,t,u,v,x,y)+sx^{3N+1} S_0(u,v,s,t,y,x).\label{gfBevaid-bounded-1}
\end{align}
\end{theorem}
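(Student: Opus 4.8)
The plan is to derive everything from Theorem~\ref{gfS_3N012} by exactly the same substitution that produced \eqref{gfBevaid} from \eqref{gfBid}, namely $a=sx$, $b=tx/s$, $c=x/t$, $d=uy$, $e=vy/u$, $f=y/v$. First I would record the effect of this substitution on the quantities appearing in Theorem~\ref{gfS_3N012}: one computes $R=abcdef=x^3y^3$, $abc=x^3$, $a+ab=sx+tx^2$, and $abcd+abcde=ux^3y+vx^3y^2$. Since this is the same specialization used in \eqref{gfBevaid}, it turns $\omega^3_\pi(a,b,c,d,e,f)$ into $x^{|\pi_o|}y^{|\pi_e|}s^{o_1(\pi)}t^{o_2(\pi)}u^{e_1(\pi)}v^{e_2(\pi)}$, so that the specialization of $DS^3_{3N+\mu}$ is precisely $S_\mu(s,t,u,v,x,y)$. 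Using \eqref{gfDS} in the form $DS^3_{3N+\mu}=(R;R)_NS^3_{3N+\mu}$ and the definition of the $q$-multinomial coefficient, the combination $(R;R)_NF(T)$ becomes $\qbin{N}{t_1,t_2,t_3,t_4}{R}(a+ab)^{t_1}(abcd+abcde)^{t_2}(abc)^{t_3}$, which under the substitution is $\qbin{N}{i,j,k,l}{x^3y^3}(sx+tx^2)^i(ux^3y+vx^3y^2)^jx^{3k}$ with $(i,j,k,l)=(t_1,t_2,t_3,t_4)$.

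For $\mu=0$, feeding this together with $R^{\binom{t_1}{2}+\binom{t_2}{2}}=(x^3y^3)^{\binom{i}{2}+\binom{j}{2}}$ into \eqref{gfDS3b3N} gives \eqref{gfBevaid-bounded-02} with $\mu=0$, because $\binom{i+\mu/2}{2}=\binom{i}{2}$ and the prefactor $1+\frac{\mu}{2}(sx+tx^2)$ is $1$. For $\mu=2$, the same substitution in \eqref{gfDS3b3N+2} produces the prefactor $1+a+ab=1+(sx+tx^2)=1+\frac{2}{2}(sx+tx^2)$ and the exponent $\binom{t_1+1}{2}+\binom{t_2}{2}=\binom{i+1}{2}+\binom{j}{2}=\binom{i+\mu/2}{2}+\binom{j}{2}$, which is exactly \eqref{gfBevaid-bounded-02} with $\mu=2$.

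The case $\mu=1$ requires one extra observation. From $DS^3_{3N+1}=(R;R)_NS^3_{3N+1}$ and \eqref{gfDS3b3N+1} I would write $DS^3_{3N+1}=(R;R)_NS^3_{3N}(a,b,c,d,e,f)+a(abc)^N(R;R)_NS^3_{3N}(d,e,f,a,b,c)$. The first summand is $DS^3_{3N}(a,b,c,d,e,f)$, which specializes to $S_0(s,t,u,v,x,y)$ by the $\mu=0$ case. For the second summand, the key point is that $R=abcdef$ is invariant under the cyclic relabelling $(a,b,c,d,e,f)\mapsto(d,e,f,a,b,c)$, so \eqref{gfDS} applies verbatim and $(R;R)_NS^3_{3N}(d,e,f,a,b,c)=DS^3_{3N}(d,e,f,a,b,c)$. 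One then checks that under our substitution the tuple $(d,e,f,a,b,c)=(uy,vy/u,y/v,sx,tx/s,x/t)$ is obtained from the standard parametrization by the swaps $s\leftrightarrow u$, $t\leftrightarrow v$, $x\leftrightarrow y$, so $DS^3_{3N}(d,e,f,a,b,c)$ specializes to $S_0(u,v,s,t,y,x)$. Finally $a(abc)^N=sx\cdot x^{3N}=sx^{3N+1}$, and assembling the two pieces gives \eqref{gfBevaid-bounded-1}.

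The only real obstacle is the bookkeeping in the $\mu=1$ step: one must verify that the cyclic relabelling $(a,b,c,d,e,f)\mapsto(d,e,f,a,b,c)$ intertwines correctly with the parametrization, so that the second term is genuinely $S_0$ with the three variable swaps applied, and that the monomial coefficient collapses to $sx^{3N+1}$. Everything else is a direct specialization of Theorem~\ref{gfS_3N012}.
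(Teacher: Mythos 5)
Your proposal is correct and follows exactly the paper's route: the paper proves this theorem by making the substitution $(a,b,c,d,e,f)=(sx,tx/s,x/t,uy,vy/u,y/v)$ in \eqref{gfDS}, and your computation of $R=x^3y^3$, $abc=x^3$, $a+ab=sx+tx^2$, $abcd+abcde=ux^3y+vx^3y^2$, together with the cyclic-shift bookkeeping for the $\mu=1$ case, supplies precisely the details the paper leaves implicit.
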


Next we take $x=y=q$ in \eqref{gfBevaid-bounded-02}, \eqref{gfBevaid-bounded-1} and 
define the generating function
\begin{align}\label{GF-bounded}
P_{3N+\mu}(s,t,u,v;q):
=\sum\limits_{\pi\in\mathcal{DS}_{3N+\mu}^3}s^{o_1(\pi)}t^{o_2(\pi)}u^{e_1(\pi)}v^{e_2(\pi)}q^{|\pi|},
\end{align}
i.e., the polynomial $S_\mu(s,t,u,v, q,q)$.
Similarly we consider the following four further specializations, corresponding to taking $s=v=0$, $t=u=0$, $t=v=0$ and $s=u=0$ respectively, and get the following theorem, which is a 
bounded version of \eqref{s=v=0} $\sim$ \eqref{s=u=0}, where
 the unspecified sums are over all quadruples $(i,j,k,l)\in \N^4$ such that $i+j+k+l=N$.

\begin{theorem}\label{bunewcomp-refine-bounded}
For $N\geq 0$, $\mu\in\{0,2\}$ we have:
\begin{align}
\label{s=v=0-b}
P_{3N+\mu}(0,t,u,0;q)&=
\left(1+\frac{\mu}{2} tq^2\right)\sum
\qbin{N}{i,j,k,l}{q^6}t^{i}u^{j}q^{3i^2+(3\mu-1)i+3j^2+j+3k},\\
\label{t=u=0-b}
P_{3N+\mu}(s,0,0,v;q)&=\left(1+\frac{\mu}{2} sq\right)\sum
\qbin{N}{i,j,k,l}{q^6}s^{i}v^{j}q^{3i^2+(3\mu-2)i+3j^2+2j+3k},\\
\label{t=v=0-b}
P_{3N+\mu}(s,0,u,0;q)&=\left(1+\frac{\mu}{2} sq\right)\sum
\qbin{N}{i,j,k,l}{q^6}s^{i}u^{j}q^{3i^2+(3\mu-2)i+3j^2+j+3k},\\
\label{s=u=0-b}
P_{3N+\mu}(0,t,0,v;q)&=\left(1+\frac{\mu}{2} tq^2\right)\sum
\qbin{N}{i,j,k,l}{q^6}t^{i}v^{j}q^{3i^2+\left(3\mu-1\right)i+3j^2+2j+3k},
\end{align}
and
%
\begin{align}
\label{s=v=0-b-1}
P_{3N+1}(0,t,u,0;q)&=\sum
\qbin{N}{i,j,k,l}{q^6}t^{i}u^{j}q^{3i^2-i+3j^2+j+3k},\\
\label{t=u=0-b-1}
P_{3N+1}(s,0,0,v;q)&=\sum\qbin{N}{i,j,k,l}{q^6}q^{3i^2-2i+3j^2+2j+3k}
\left(s^{i}v^{j}+s^{j+1}v^{i}q^{i-j+3N+1}\right),\\
\label{t=v=0-b-1}
P_{3N+1}(s,0,u,0;q)&=\sum\qbin{N}{i,j,k,l}{q^6}q^{3i^2-2i+3j^2+j+3k}\left(s^{i}u^{j}+s^{j+1}u^{i}q^{3N+1}\right),\\
\label{s=u=0-b-1}
P_{3N+1}(0,t,0,v;q)&=\sum
\qbin{N}{i,j,k,l}{q^6}t^{i}v^{j}q^{3i^2-i+3j^2+2j+3k}.
\end{align}
\end{theorem}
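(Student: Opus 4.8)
The plan is to obtain all ten identities as direct specializations of Theorem~\ref{gfBeva-bounded}, using that by definition $P_{3N+\mu}(s,t,u,v;q)=S_\mu(s,t,u,v,q,q)$. For $\mu\in\{0,2\}$ I would set $x=y=q$ in \eqref{gfBevaid-bounded-02}: this turns $x^3y^3$ into $q^6$ (so $\qbin{N}{i,j,k,l}{x^3y^3}$ becomes $\qbin{N}{i,j,k,l}{q^6}$), the factor $x^{3k}$ into $q^{3k}$, the binomials $sx+tx^2$ and $ux^3y+vx^3y^2$ into $sq+tq^2$ and $uq^4+vq^5$, and $(x^3y^3)^{\binom{i+\mu/2}{2}+\binom{j}{2}}$ into $q^{6\binom{i+\mu/2}{2}+6\binom{j}{2}}$. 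The one elementary fact needed is that $6\binom{i+\mu/2}{2}=3i^2+3(\mu-1)i$ for $\mu\in\{0,2\}$ --- the constant term $3\mu(\mu-2)/4$ vanishes at both $\mu=0$ and $\mu=2$ --- together with $6\binom{j}{2}=3j^2-3j$.

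Then, for each of the four dual specializations I would set two of $s,t,u,v$ to zero and read off the surviving monomial. Killing $s$ and $v$ sends $(sq+tq^2)^i\mapsto t^iq^{2i}$ and $(uq^4+vq^5)^j\mapsto u^jq^{4j}$, so the exponent of $q$ in the summand becomes $2i+4j+3k+3i^2+3(\mu-1)i+3j^2-3j=3i^2+(3\mu-1)i+3j^2+j+3k$, while the prefactor $1+\frac{\mu}{2}(sx+tx^2)$ collapses to $1+\frac{\mu}{2}tq^2$; this is \eqref{s=v=0-b}. The other three choices go the same way: $t=u=0$ introduces the factors $s^iq^i$ and $v^jq^{5j}$, $t=v=0$ the factors $s^iq^i$ and $u^jq^{4j}$, and $s=u=0$ the factors $t^iq^{2i}$ and $v^jq^{5j}$, which shift the linear coefficients of $i$ and $j$ by the stated amounts and produce \eqref{t=u=0-b}, \eqref{t=v=0-b} and \eqref{s=u=0-b}.

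For the four $\mu=1$ identities I would instead specialize \eqref{gfBevaid-bounded-1}, which at $x=y=q$ reads
$$
P_{3N+1}(s,t,u,v;q)=P_{3N}(s,t,u,v;q)+sq^{3N+1}P_{3N}(u,v,s,t;q),
$$
since $S_0(u,v,s,t,q,q)$ is just $P_{3N}$ with the odd- and even-indexed pairs $(s,t)$ and $(u,v)$ interchanged. When two variables vanish, the second summand is either annihilated (when $s=0$, i.e.\ in the $s=v=0$ and $s=u=0$ cases, so \eqref{s=v=0-b-1} and \eqref{s=u=0-b-1} coincide with the $\mu=0$ formulas \eqref{s=v=0-b} and \eqref{s=u=0-b}), or it survives as $sq^{3N+1}$ times an already proven $\mu=0$ specialization with $s$ swapped against $u$ (case $t=v=0$) or against $v$ (case $t=u=0$). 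In the latter two cases one plugs \eqref{t=v=0-b} (resp.\ \eqref{t=u=0-b}) at $\mu=0$ into both summands, uses the symmetry of $\qbin{N}{i,j,k,l}{q^6}$ in its lower entries to relabel, and records the difference of the two quadratic exponents: for $t=v=0$ this difference is the constant $3N+1$ (yielding \eqref{t=v=0-b-1}), and for $t=u=0$ it is $i-j+3N+1$ (yielding \eqref{t=u=0-b-1}).

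None of this involves any real difficulty. The only place where care is required --- and where I would expect bookkeeping slips rather than a genuine obstacle --- is matching the quadratic exponents in the two surviving $\mu=1$ terms, where the forms $3i^2-2i$ and $3j^2+j$ (or $3j^2+2j$) become attached to the swapped variables and the monomials $s^{j}u^i$ must be shifted to $s^{j+1}u^i$. As a safeguard I would expand $P_{3N+\mu}$ directly for the first few values of $N$ and check each of the ten formulas term by term.
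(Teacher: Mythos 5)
Your proposal is correct and is exactly the route the paper takes: the paper offers no separate proof beyond stating that Theorem~\ref{bunewcomp-refine-bounded} is obtained by setting $x=y=q$ in \eqref{gfBevaid-bounded-02}--\eqref{gfBevaid-bounded-1} and then specializing two of $s,t,u,v$ to zero in each of the four ways. Your exponent bookkeeping (including the vanishing of $3\mu(\mu-2)/4$ at $\mu\in\{0,2\}$ and the swap $(s,t)\leftrightarrow(u,v)$ producing the extra $q^{3N+1}$ and $q^{i-j+3N+1}$ terms) checks out.
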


Recall the $q$-binomial coefficients
 $$
\qbin{n}{k}{q} :=
\begin{cases}\dfrac{(q;q)_n}{(q;q)_k(q;q)_{n-k}}, & \text{ for } n\geq k\geq 0,\\ 0, & \text{ otherwise. }\end{cases}
$$
For non-negative integers $N,\, i,\,j$ where $N\geq i,\,j$, and $m=1,\,2$, we define 
\begin{align}
\omega(m,i,j):&=(3i-m)i+(3j+m)j,\\
\pi(m,i,j):&=\omega(m,i,j)+(-1)^{m}i,
\end{align}
and
\begin{align}
F_{N}(i,j; q):&={N\brack i, j,N-i-j}_{q^6} (-q^3;q^3)_{N-i-j},\\
G_{N}(i,j; q):&=\frac{1-q^{3(N+1+i- j)}}{1-q^{6(N+1)}}  F_{N+1}(i,j; q).
\end{align}
\begin{theorem} Let $[x^iy^j]p(x,y)$ be the coefficient of $x^iy^j$ in the polynomial $p(x,y)$. 
For non-negative integers $N,\, i,\,j$ where $N\geq i,\,j$, and $\mu=0, \,1,\,2$, we  have 
\begin{align}
[t^iu^j]P_{3N+\mu}(0,t,u,0;q)&=q^{\omega(1,i,j)}\left((\delta_{0\mu}+\delta_{1\mu}) F_{N}(i,j; q)+
\delta_{2\mu} G_{N}(i,j; q)\right), \label{coeff1}\\
[s^iv^j]P_{3N+\mu}(s,0,0,v;q)&=q^{\omega(2,i,j)}\left(\delta_{0\mu}F_{N}(i,j; q)+
(\delta_{1\mu}+\delta_{2\mu}) G_{N}(i,j; q)\right), \label{coeff2}\\
[s^iu^j]P_{3N+\mu}(s, 0,u,0;q)&=q^{\pi(1,i,j)}\left(\delta_{0\mu} F_{N}(i,j; q)+
(\delta_{1\mu}+\delta_{2\mu}) G_{N}(i,j; q)\right), \label{coeff3}\\
[t^iv^j]P_{3N+\mu}(0,t,0,v;q)&=q^{\pi(2,i,j)}\left((\delta_{0\mu}+\delta_{1\mu}) F_{N}(i,j; q)+
\delta_{2\mu} G_{N}(i,j; q)\right).\label{coeff4}
\end{align} 
\end{theorem}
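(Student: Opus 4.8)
The plan is to read off the required coefficients directly from the explicit expansions in Theorem~\ref{bunewcomp-refine-bounded}. Formulas \eqref{s=v=0-b}--\eqref{s=u=0-b-1} already present the four polynomials $P_{3N+\mu}(0,t,u,0;q)$, $P_{3N+\mu}(s,0,0,v;q)$, $P_{3N+\mu}(s,0,u,0;q)$, $P_{3N+\mu}(0,t,0,v;q)$ as finite sums of $q$-multinomial coefficients over quadruples $(i,j,k,l)\in\N^4$ with $i+j+k+l=N$, so all that remains is to extract the stated monomial and recognise the outcome. Two elementary identities do the work, and I would isolate them first. The first is the finite $q$-series identity
\[
\sum_{k=0}^{M}\qbin{M}{k}{q^6}q^{3k}=(-q^3;q^3)_M,
\]
which follows in one line by induction on $M$ from the $q$-Pascal recurrence. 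Factoring $\qbin{N}{i,j,k,l}{q^6}=\qbin{N}{i,j,N-i-j}{q^6}\qbin{N-i-j}{k}{q^6}$, this collapses the summation over $k$ (equivalently over $l$) in each of \eqref{s=v=0-b}--\eqref{s=u=0-b-1}, contributing the factor $(-q^3;q^3)_{N-i-j}$ that turns $\qbin{N}{i,j,N-i-j}{q^6}$ into $F_N(i,j;q)$.

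The second input handles the extra summands that appear when $\mu\in\{1,2\}$. Writing $M=N-i-j$ and $P=3(M+1)=3(N+1-i-j)$, I would establish the two $q$-factorial identities
\begin{align*}
q^{6i}F_N(i,j;q)+\qbin{N}{i-1,j,M+1}{q^6}(-q^3;q^3)_{M+1}&=G_N(i,j;q),\\
F_N(i,j;q)+q^{P}\,\qbin{N}{i-1,j,M+1}{q^6}(-q^3;q^3)_{M+1}&=G_N(i,j;q).
\end{align*}
Expanding every term over the common denominator $(q^6;q^6)_i(q^6;q^6)_j(q^6;q^6)_{M+1}$, using $(q^6;q^6)_{M+1}=(1-q^{6(M+1)})(q^6;q^6)_M$, $(-q^3;q^3)_{M+1}=(1+q^P)(-q^3;q^3)_M$ and $3(N+1+i-j)=P+6i$, both reduce to the trivial polynomial identity $1+q^{P}-q^{6i+P}-q^{6i+2P}=(1-q^{6i+P})(1+q^{P})$.

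With these in hand the theorem is a finite check over the twelve cases (four coefficients, three values of $\mu$). For $\mu=0$ none of \eqref{s=v=0-b}--\eqref{s=u=0-b} carries a prefactor, so one coefficient extraction plus the first identity yields $q^{\omega(1,i,j)}F_N$, $q^{\omega(2,i,j)}F_N$, $q^{\pi(1,i,j)}F_N$, $q^{\pi(2,i,j)}F_N$ respectively, the $q$-power being exactly the exponent already displayed; this is the $\delta_{0\mu}$ term. For $\mu=1$: formulas \eqref{s=v=0-b-1} and \eqref{s=u=0-b-1} are again single sums, giving $q^{\omega(1,i,j)}F_N$ and $q^{\pi(2,i,j)}F_N$; formulas \eqref{t=u=0-b-1} and \eqref{t=v=0-b-1} carry the additional term $s^{j+1}(\cdots)$, which survives because $s$ has not been set to $0$ (whereas in \eqref{s=v=0-b-1}, \eqref{s=u=0-b-1} the leading $s$ in the second summand of \eqref{gfBevaid-bounded-1} kills it), and after the first identity this extra term lies at $q$-degree $P$ above the leading $F_N$ with multinomial $\qbin{N}{i-1,j,M+1}{q^6}(-q^3;q^3)_{M+1}$, so the second identity above turns the sum into $q^{\omega(2,i,j)}G_N$ and $q^{\pi(1,i,j)}G_N$, the $\delta_{1\mu}$ terms. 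For $\mu=2$: each of \eqref{s=v=0-b}--\eqref{s=u=0-b} carries the prefactor $(1+\frac{\mu}{2}\ast)$ with $\ast\in\{sq,tq^2\}$; the ``$1$''-part produces $q^{6i}F_N$ times the relevant $q$-power (one checks that the $\mu=2$ exponent exceeds $\omega(m,i,j)$, resp.\ $\pi(m,i,j)$, by exactly $6i$), the ``$\ast$''-part shifts $i\mapsto i-1$ and, after the first identity, contributes $\qbin{N}{i-1,j,M+1}{q^6}(-q^3;q^3)_{M+1}$, and the first identity above collapses their sum to $q^{\omega(m,i,j)}G_N$, resp.\ $q^{\pi(m,i,j)}G_N$, the $\delta_{2\mu}$ terms.

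The main obstacle is bookkeeping rather than insight. For each of the twelve extractions one must align the $q$-exponent coming out of Theorem~\ref{bunewcomp-refine-bounded} with $\omega(m,i,j)$ or $\pi(m,i,j)$, and align the index shift $i\mapsto i-1$ (arising from the prefactor when $\mu=2$, or from the $s^{j+1}(\cdots)$ summand when $\mu=1$) with the definition of $G_N$. The point requiring care is precisely that all twelve computations reduce to the two displayed $q$-factorial identities together with the two small arithmetic facts above (the $\mu=2$ shift being $6i$, and the extra $\mu=1$ summand lying $q$-degree $P$ above the leading one); once these are confirmed, each case is a one-line manipulation.
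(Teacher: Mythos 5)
Your proposal is correct and follows essentially the same route as the paper's own (very terse) proof: extract coefficients from the explicit sums of Theorem~\ref{bunewcomp-refine-bounded} and collapse the free summation index via \eqref{rogers} with $q\mapsto q^3$, which is exactly your first identity. The two $q$-factorial identities you add to handle the $\mu=1,2$ index shifts are precisely the ``omitted details'' of the paper, and your verification of them (reducing to $(1-q^{6i+P})(1+q^{P})$) checks out.
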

\begin{proof}
By extracting the corresponding coefficients in 
\eqref{s=v=0-b}-\eqref{s=u=0-b} (resp. \eqref{s=v=0-b-1}-\eqref{s=u=0-b-1}) and applying the following well known identity(see \cite[Page 49]{andtp}):
\begin{align}\label{rogers}
\sum\limits_{j=0}^{n}\qbin{n}{j}{q^2}q^j=(-q;q)_n,
\end{align}
we recover the right hand sides of the above identities. We omit the details.
\end{proof}
The first two identities \eqref{coeff1} and \eqref{coeff2} 
 are equivalent to the main Theorem~2.2 in \cite{bu1}, which leads to their Theorem~2.5 in \cite{bu1}
 for $\mu=0$,
although one needs some careful verification on the different-looking boundary conditions to see this equivalence from the left hand side.  In the same vein, from  the $\mu=0$ case of 
\eqref{coeff3} and \eqref{coeff4}  
 we can
derive a refinement 
of Theorem~\ref{thm: FZ-inf}.

\begin{Def}
For integers $N,n,i,j\geq 0, m\in\{1,2\}$, let $D_{m,3N}^{\text{I}}(i,j,n)$ be the number of partitions of $n$ into distinct parts such that
\begin{enumerate}[i.]
\item each part $\not\equiv -m \pmod 3$;
\item each part $\leq 3N$;
\item there are exactly $i$ odd-indexed parts $\equiv m \pmod 3$;
\item there are exactly $j$ even-indexed parts $\equiv m \pmod 3$.
\end{enumerate}
Let $D_{m,3N}^{\text{II}}(i,j,n)$ be the number of partitions of $n$ into distinct parts such that
\begin{enumerate}[i.]
\item each part $\not\equiv -m \pmod 3$;
\item there are exactly $i$ parts $\equiv m \pmod 6$ and these parts are all $\leq 6N+m-6$;
\item there are exactly $j$ parts $\equiv m+3 \pmod 6$ and these parts are all $\leq 6(N-i)+m-3$;
\item all parts $\equiv 0 \pmod 3$ are $\leq 3(N-i-j)$.
\end{enumerate}
\end{Def}

\begin{theorem}[$\leq 3N$ version of Theorem~\ref{thm: FZ-inf}]\label{thm: FZ-finite}
For integers $N,n,i,j\geq 0, m\in\{1,2\}$,
$$
D_{m,3N}^{\text{I}}(i,j,n)=D_{m,3N}^{\text{II}}(i,j,n).
$$
\end{theorem}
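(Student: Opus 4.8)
The plan is to derive Theorem~\ref{thm: FZ-finite} purely from the generating function identities \eqref{t=v=0-b} and \eqref{s=u=0-b} at $\mu=0$, reading off the two sides as weighted counts of the partition families in the two Definitions. First I would observe that the left-hand side generating function $P_{3N}(s,0,u,0;q)$ (respectively $P_{3N}(0,t,0,v;q)$) is, by \eqref{GF-bounded} and the defining vanishing of the parameters, exactly the generating function for $m=1$ (resp. $m=2$) of partitions in $\mathcal{DS}^3$ with largest part $\leq 3N$, no odd-indexed or even-indexed part $\equiv -m\pmod 3$ (equivalently, since these partitions are distinct and $3$-strict, no part at all $\equiv -m\pmod 3$), and with $s$ (resp. $t$) marking odd-indexed parts $\equiv m\pmod 3$ and $u$ (resp. $v$) marking even-indexed parts $\equiv m\pmod 3$. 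So the coefficient of $s^iu^jq^n$ (resp. $t^iv^jq^n$) on the left is precisely $D_{m,3N}^{\text{I}}(i,j,n)$; this is the "type I" side.

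Next I would interpret the right-hand sides combinatorially as counting "type II" partitions. The summand $\qbin{N}{i,j,k,l}{q^6}\,s^iu^j q^{3i^2+(3\mu-2)i+3j^2+j+3k}$ at $\mu=0$ should be decoded via the standard combinatorial meaning of the $q$-multinomial coefficient: $\qbin{N}{i,j,k,l}{q^6}$ generates, with base $q^6$, lattice-path-type selections that translate into a partition with $i$ parts in one arithmetic progression, $j$ in another, $k$ in a third, fitting inside an $N$-box. Concretely, I expect $q^{3i^2-2i}=q^{1+7+\cdots+(6i-5)}$ together with the $q^6$-shifting to produce exactly $i$ distinct parts $\equiv 1\pmod 6$ bounded by $6N-5$ (the $m=1$ case, with the general $6N+m-6$); $q^{3j^2+j}=q^{4+10+\cdots+(6j-2)}$ together with $q^6$-shifting to produce $j$ distinct parts $\equiv 4\pmod 6$ bounded by $6(N-i)-2$ (general $6(N-i)+m-3$); and $q^{3k}$ to produce, via the remaining $q^6$-multinomial factor and the part $\equiv 0\pmod 3$ count, the parts $\equiv 0\pmod 3$ bounded by $3(N-i-j)$. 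Matching the three exponent pieces against the three upper-bound conditions in the definition of $D_{m,3N}^{\text{II}}$, and verifying the consistency of the box-size bookkeeping ($N-i-j-k$ parts "left over" corresponding to $l$), gives that the coefficient of $s^iu^jq^n$ on the right is exactly $D_{m,3N}^{\text{II}}(i,j,n)$. The identical argument with $(s,u)\to(t,v)$ and the exponent $3i^2-i$, $3j^2+2j$ handles $m=2$ via \eqref{s=u=0-b}.

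Having both sides of \eqref{t=v=0-b} (resp. \eqref{s=u=0-b}) identified as the generating functions of $D_{m,3N}^{\text{I}}$ and $D_{m,3N}^{\text{II}}$ respectively, the theorem follows by comparing coefficients of $s^iu^jq^n$ (resp. $t^iv^jq^n$). I expect the main obstacle to be the careful verification that the quadratic exponents $3i^2-2i$, $3j^2+j$, $3k$ and the nested box parameters $N, N-i, N-i-j$ in the $q$-multinomial coefficient really do correspond, term by term, to the three bounding conditions ii, iii, iv of $D_{m,3N}^{\text{II}}$ — in other words, translating the algebraic finite sum into the stated combinatorial description with exactly the right, rather slightly awkward, upper bounds. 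This is the same kind of "careful verification on the different-looking boundary conditions" already flagged in the text after \eqref{coeff4}, so I would present it by explicitly writing each distinct part as a minimal staircase plus a $q^6$-partition fitting in a box of the prescribed width, and confirm the width equals the claimed bound. Once that dictionary is pinned down, the rest is immediate; I would also sanity-check against the $N\to\infty$ limit, where the bounds disappear and the statement collapses to Theorem~\ref{thm: FZ-inf}, and against a small numerical case such as the $n=17$, $m=1$ example already tabulated.
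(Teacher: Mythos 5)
Your proposal is correct and follows essentially the same route as the paper: the paper first collapses the inner sum over $(k,l)$ in \eqref{t=v=0-b} and \eqref{s=u=0-b} at $\mu=0$ via the identity \eqref{rogers} to reach the product forms \eqref{coeff3} and \eqref{coeff4}, and then carries out exactly the staircase-plus-$q^6$-box reading of each factor that you describe. The one step you leave implicit --- that $\sum_{k}\qbin{N-i-j}{k}{q^6}q^{3k}=(-q^3;q^3)_{N-i-j}$ accounts for the distinct parts $\equiv 0 \pmod 3$ bounded by $3(N-i-j)$ --- is precisely that application of \eqref{rogers}.
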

\begin{proof}
The proof is analogous to the proof of Theorem~2.5 in \cite{bu1}, we sketch the case of $m=1$ for completeness. When $m=1$, it is clear that the generating function of $D_{1,3N}^{\text{I}}(i,j,n)$ is given by the left hand side of \eqref{coeff3} with $\mu=0$, in which case its right hand side becomes
\begin{align*}
q^{\pi(1,i,j)}F_N(i,j;q)=\left(q^{6\binom{i+1}{2}-5i}\qbin{N}{i}{q^6}\right)\left(q^{6\binom{j+1}{2}-2j}\qbin{N-i}{j}{q^6}\right)(-q^3;q^3)_{N-i-j}.
\end{align*}
It is evident that $q^{6\binom{i+1}{2}-5i}\qbin{N}{i}{q^6}$ generates exactly $i$ distinct parts $\equiv 1 \pmod 6$, and each part $\leq 6N-5$, matching condition ii in the definition of $D_{1,3N}^{\text{II}}(i,j,n)$. Similarly, $q^{6\binom{j+1}{2}-2j}\qbin{N-i}{j}{q^6}$ generates exactly $j$ distinct parts $\equiv 4 \pmod 6$, each part $\leq 6(N-i)-2$, matching condition iii. Lastly, $(-q^3;q^3)_{N-i-j}$ generates distinct parts that are $\equiv 0 \pmod 3$, each $\leq 3(N-i-j)$, matching condition iv. Collectively, we have condition i, hence we see the right hand side is exactly the generating function of $D_{1,3N}^{\text{II}}(i,j,n)$, this completes the proof of $D_{1,3N}^{\text{I}}(i,j,n)=D_{1,3N}^{\text{II}}(i,j,n)$. The proof of $D_{2,3N}^{\text{I}}(i,j,n)=D_{2,3N}^{\text{II}}(i,j,n)$ follows from similar verification and thus omitted.
\end{proof}

\begin{remark}
Note that Berkovich-Uncu's Theorem~2.5 and our Theorem~\ref{thm: FZ-finite}
only give partition interpretation for $\mu=0$ case of the right-hand sides of \eqref{coeff1}-\eqref{coeff4} and 
for the $\mu=1$ case of \eqref{coeff1} and \eqref{coeff4}, 
 while other  
cases when  $\mu=1,2$ were missed. It would be interesting to find out whether there are 
any  combinatorial interpretations for  these missing cases.
\end{remark}
%

\section{Application to bounded versions of Boulet's generating functions}{\label{sec: mod2}}

In this section, we develop along the same line as in last section for the new case of $k=2$, with Boulet's weight $\omega^2_\pi(a,b,c,d)$, see \eqref{boulet}. First off we record the special $k=2$ case of Theorem~\ref{main1}, which gives Boulet's \eqref{boulet-strict} as a quick corollary.
\begin{theorem}\label{gf2strict} Let $Q=abcd$.
\begin{align}
\label{gfE2}\sum\limits_{\lambda\in \mathcal{E}^2}\omega^2_{\lambda}(a,b,c,d) &=\dfrac{1}{(Q;Q)_{\infty}},\\
\label{gf2id}
\sum\limits_{\pi\in\mathcal{S}^2}\omega^2_{\pi}(a,b,c,d)&=\dfrac{(-a,-abc;Q)_{\infty}}{(ab,Q;Q)_{\infty}}.
\end{align}
\end{theorem}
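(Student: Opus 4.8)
The plan is to recognize Theorem~\ref{gf2strict} as nothing more than the $k=2$ specialization of the already-proven Theorem~\ref{main1}, followed by a short unwinding of the parameter dictionary. First I would set $k=2$ in the definitions preceding Theorem~\ref{main1}: the labels $(a_1,a_2,b_1,b_2)$ become Boulet's $(a,b,c,d)$ in that order, since the odd-indexed rows are filled cyclically with $a_1,a_2$ and the even-indexed rows with $b_1,b_2$. Then I would compute the four auxiliary quantities for $k=2$: $z_2 = a_1a_2 = ab$, $w_2 = a_1b_1a_2b_2 = abcd = Q$, $x_2 = a_1 = a$ (the sum $a_1 + a_1a_2 + \cdots + a_1\cdots a_{k-1}$ has only the single term $a_1$ when $k=2$), and $y_2 = z_2 b_1 = abc$ (again only one term $b_1$ inside the parenthesis). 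Substituting these into \eqref{gfEk} gives $\sum_{\lambda\in\mathcal{E}^2}\omega^2_\lambda = 1/(Q;Q)_\infty$, which is \eqref{gfE2}, and substituting into \eqref{gfSk} gives $\sum_{\pi\in\mathcal{S}^2}\omega^2_\pi = (-a,-abc;Q)_\infty / (ab,Q;Q)_\infty$, which is \eqref{gf2id}.

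The only genuinely substantive remark to include is the promised corollary: that \eqref{gf2id} recovers Boulet's strict identity \eqref{boulet-strict}. For this I would invoke Theorem~\ref{decomp1} with $k=2$, which gives the factorization $\sum_{\pi\in\mathcal{S}^2}\omega^2_\pi = \bigl(\sum_{\pi\in\mathcal{DS}^2}\omega^2_\pi\bigr)\bigl(\sum_{\lambda\in\mathcal{E}^2}\omega^2_\lambda\bigr)$, equivalently \eqref{gfDSk} at $k=2$. Dividing \eqref{gf2id} by \eqref{gfE2} cancels the factor $1/(Q;Q)_\infty$ and yields $\sum_{\pi\in\mathcal{DS}^2}\omega^2_\pi = (-a,-abc;Q)_\infty/(ab;Q)_\infty$. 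Since $\mathcal{D}\cap\mathcal{S}^2 = \mathcal{D}$ (observed just after the definition of $\mathcal{DS}^k$, because a $2$-strict partition into distinct parts is just a strict partition), we have $\mathcal{DS}^2 = \mathcal{D}$, so this last identity is exactly \eqref{boulet-strict}.

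I do not anticipate any real obstacle here: every ingredient — Theorem~\ref{main1}, Theorem~\ref{decomp1}, and the identification $\mathcal{DS}^2=\mathcal{D}$ — is already established in the excerpt, and the proof is a matter of bookkeeping the specialization $k=2$ correctly. The one point requiring a moment's care is the convention that for $k=2$ the telescoping sums defining $x_k$ and $y_k$ collapse to a single monomial (the ``$a_1\ldots a_{k-1}$'' pattern with $k-1=1$), so that $x_2=a_1$ and $y_2 = a_1a_2 b_1$ rather than anything longer; getting this right is what makes the $(-a,-abc;Q)$ in the numerator come out exactly. With that in hand the proof is two lines: substitute and cite Theorem~\ref{main1} for \eqref{gfE2} and \eqref{gf2id}, then divide and cite Theorem~\ref{decomp1} together with $\mathcal{DS}^2 = \mathcal{D}$ for the stated corollary \eqref{boulet-strict}.
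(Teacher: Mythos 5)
Your proposal is correct and is exactly the paper's route: the paper states Theorem~\ref{gf2strict} as the $k=2$ specialization of Theorem~\ref{main1} with no further argument, and your parameter dictionary $(a_1,a_2,b_1,b_2)=(a,b,c,d)$, $z_2=ab$, $w_2=Q$, $x_2=a$, $y_2=abc$ is the right bookkeeping. The additional derivation of Boulet's \eqref{boulet-strict} via Theorem~\ref{decomp1} and $\mathcal{DS}^2=\mathcal{D}$ also matches what the paper does implicitly.
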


Since the constructions are highly analogous, most proofs are either omitted or 
sketched, to avoid unnecessary repetitions. And since we have made quite a few observations 
(see identities (\ref{boulet}) through (\ref{boulet-stric-bu})) for the infinite case in the introduction, we begin here with the bounded case. 

\subsection{Single-bounded Case}\label{BC1}
The main goal of this subsection is to give a new proof of (\ref{iz1}) using $2$-strict partitions. First recall that $\mathcal{DS}^2=\mathcal{D}$. In view of Corollary~\ref{decomp1N}, to get the generating function for $\mathcal{D}_{N,\infty}$, it suffices to find the generating functions for $\mathcal{E}^2_{N,\infty}$ and $\mathcal{S}^2_{N,\infty}$ separately.

\begin{proposition} Let $Q=abcd$.
For any non-negative integer $N$,
\begin{align}
\label{gfE2b}\sum\limits_{\pi\in \mathcal{E}^2_{N,\infty}}\omega^2_{\pi}(a,b,c,d) &=\dfrac{1}{(Q;Q)_{\left\lfloor{N}/{2}\right\rfloor}}.
\end{align}
\end{proposition}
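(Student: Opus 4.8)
The plan is to mimic exactly the argument already used for the mod-$3$ analogue in the proof of \eqref{gfE3b}, since $\mathcal{E}^2$ has an even simpler structure. First I would observe that a partition $\pi \in \mathcal{E}^2$ has all its parts even (i.e. $\equiv 0 \pmod 2$) and each part repeated an even number of times; consequently the largest part of any $\pi\in\mathcal{E}^2$ is even, so bounding the largest part by $N$ is the same as bounding it by $2\lfloor N/2\rfloor$. In other words $\mathcal{E}^2_{2l,\infty} = \mathcal{E}^2_{2l+1,\infty}$ for every $l$, which is precisely what produces the floor $\lfloor N/2\rfloor$ in the stated formula.

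Next I would invoke (the bounded analogue of) the bijection $\pi \mapsto \pi^*$ used in the proof of \eqref{gfE3} (resp. \eqref{gfEk} of Theorem~\ref{main1}) specialized to $k=2$: if $\pi = (\pi_1,\pi_1,\pi_3,\pi_3,\ldots)$ lists the parts with their even multiplicities grouped in pairs, set $\pi^* = (\pi_1/2, \pi_3/2, \ldots)$. This is a bijection from $\mathcal{E}^2$ onto the set $\mathcal{P}$ of ordinary partitions, and under it $\omega^2_\pi(a,b,c,d) = Q^{|\pi^*|}$ since each matched pair of equal rows of $\pi$ contributes a full block of labels $a,b,\ldots,a,b$ on top of $c,d,\ldots,c,d$, i.e. a factor $(abcd)^{(\text{common part})/2} = Q^{(\text{common part})/2}$. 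Restricting to $\mathcal{E}^2_{N,\infty}$, the condition "largest part of $\pi$ is $\leq N$" translates to "largest part of $\pi^*$ is $\leq \lfloor N/2\rfloor$," so $\pi\mapsto\pi^*$ restricts to a bijection $\mathcal{E}^2_{N,\infty} \xrightarrow{\sim} \mathcal{P}_{\lfloor N/2\rfloor,\infty}$.

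Finally I would conclude by summing:
\[
\sum_{\pi\in\mathcal{E}^2_{N,\infty}}\omega^2_\pi(a,b,c,d) = \sum_{\mu\in\mathcal{P}_{\lfloor N/2\rfloor,\infty}} Q^{|\mu|} = \frac{1}{(Q;Q)_{\lfloor N/2\rfloor}},
\]
where the last equality is the standard generating function for partitions with largest part at most $\lfloor N/2\rfloor$ (the bounded-part, unbounded-length version of Euler's formula). This establishes \eqref{gfE2b}. There is no real obstacle here — the only point requiring a word of care, exactly as flagged in the mod-$3$ case, is the passage from $N$ to $\lfloor N/2\rfloor$, which is justified by the first observation that every part of a partition in $\mathcal{E}^2$ is even; everything else is a direct specialization of machinery already in place.
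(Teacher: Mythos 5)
Your proof is correct and is exactly the argument the paper intends: the proposition is stated without proof as "highly analogous" to the mod-$3$ case \eqref{gfE3b}, whose proof consists precisely of the two observations you make — that all parts of a partition in $\mathcal{E}^2$ are even (whence the floor $\lfloor N/2\rfloor$) and that the bijection $\pi\mapsto\pi^*$ from the proof of \eqref{gfEk} sends $\omega^2_\pi$ to $Q^{|\pi^*|}$ and restricts to partitions with largest part at most $\lfloor N/2\rfloor$.
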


\begin{theorem}\label{gfS2b} Let $Q=abcd$.
For any non-negative integer $N$, $\nu\in\{0,1\}$,
\begin{align}\label{gfS2bid}
\sum\limits_{\pi\in\mathcal{S}^2_{2N+\nu,\infty}}\omega^2_{\pi}(a,b,c,d)&=\sum\limits_{i=0}^{N}\dfrac{(-a;Q)_{N-i+\nu}(-c;Q)_{i}(ab)^i}{(Q;Q)_{N-i}(Q;Q)_{i}}\\
\nonumber &=\dfrac{1}{(Q;Q)_{N}}\sum\limits_{i=0}^{N}\qbin{N}{i}{Q}(-a;Q)_{N-i+\nu}(-c;Q)_{i}(ab)^i.
\end{align}
\end{theorem}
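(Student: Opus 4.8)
The plan is to mimic the vertical dissection used in the proof of Theorem~\ref{main1}, but now keeping track of the height of the diagram so that the largest part is bounded by $2N+\nu$. So, given a partition $\pi\in\mathcal{S}^2_{2N+\nu,\infty}$, I would cut its $\omega^2$-labelled diagram into vertical strips of width $2$ (the last strip possibly of width $1$ if the largest part is odd), and analyze which ``types'' of strips arise. As in Figure~\ref{sixtypes} specialized to $k=2$, condition~\eqref{kstrictcond} forces only a handful of strip shapes, and exactly as in Theorem~\ref{main1} the two ``strict'' types (whose generating factors involve $(-a;Q)$ and $(-c;Q)$) must have pairwise distinct heights, while the remaining types may repeat. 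The extra bookkeeping compared to the infinite case is that the total height is now $\leq N$ (in units of double-rows), which is what produces the finite sum $\sum_{i=0}^N$ and the $q$-binomial $\qbin{N}{i}{Q}$ rather than a clean infinite product.

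The key steps, in order, would be: (1) record the four strip types and their weight contributions for $k=2$, noting that the width-$1$ tail strip (present iff $\nu=1$ and the largest part is odd) contributes exactly the extra $(-a;Q)_{N-i+\nu}$ versus $(-a;Q)_{N-i}$ — this is the mechanism by which $\nu$ enters; (2) let $i$ be the number of double-rows occupied by the ``$(-c;Q)$-type'' strips (equivalently, track one of the two repetition-allowed-vs-distinct statistics), so that $N-i$ double-rows are available for the ``$(-a;Q)$-type'' strips plus the repeatable types; (3) write the generating function for each group with a bounded number of double-rows using the finite Euler-type identities (e.g. $\sum_{t}\frac{x^tQ^{\binom t2}}{(Q;Q)_t}$ truncated, and $\frac{1}{(Q;Q)_t}$), and assemble the product over all four groups subject to the height constraint summing to $N$; (4) collapse the resulting multisum over the four type-counts into the single sum over $i$, which yields the first displayed line of \eqref{gfS2bid}; (5) rewrite $\frac{1}{(Q;Q)_{N-i}(Q;Q)_i}=\frac{1}{(Q;Q)_N}\qbin{N}{i}{Q}$ to get the second line. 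Alternatively — and this is probably cleaner to write up — I would derive \eqref{gfS2bid} from the $k=3$-style recurrence machinery of Theorem~\ref{gfS_3N012} adapted to $k=2$: prove a recurrence expressing $S^2_{2N+1}$ in terms of $S^2_{2N}$ (with the role of swapping $a\leftrightarrow c$, $b\leftrightarrow d$ coming from relabelling the remaining rows) together with the base identity for $S^2_{2N}$, and check the closed form satisfies it by a short $q$-binomial manipulation.

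I expect the main obstacle to be the careful treatment of the boundary/tail behavior: precisely pinning down how the optional width-$1$ column and the ``largest part $\leq 2N+\nu$'' constraint interact with the ``distinct heights'' condition on the two strict strip types, so that the upper limits of the various truncated sums are correct (this is exactly the subtlety that forced the $3N$, $3N+1$, $3N+2$ case split in Theorem~\ref{gfS_3N012}). In particular one must verify that introducing the parameter $\nu$ really only shifts a single Pochhammer index by $\nu$ and does not alter the summation range, which requires arguing that the odd top row, when present, can always be absorbed into the $a,b$-labelled group without creating a forbidden repeated odd part. Once that compatibility is established, the rest is routine: it is just the finite form of the $q$-series identities already invoked in Remark~\ref{bdtoinfrmk}, and letting $N\to\infty$ recovers \eqref{gf2id} via the Euler identity $\sum_i \frac{(ab)^i}{(Q;Q)_i}(-c;Q)_i \to (-abc;Q)_\infty/(ab;Q)_\infty$-type limits, giving a useful consistency check.
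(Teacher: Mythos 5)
Your proposal follows essentially the same route as the paper's own proof: a vertical dissection of the $\omega^2$-labelled diagram into width-$2$ column blocks, the four block types of Figure~\ref{sixtypes} at $k=2$ (two with forced distinct heights, two repeatable), the bound on the largest part capping the number of blocks at $N$ (with one extra forced single-cell block labelled $a$ allowed when $\nu=1$), and the regrouping of the four type-counts into the factors $(-c;Q)_i(ab)^i/(Q;Q)_i$ and $(-a;Q)_{N-i+\nu}/(Q;Q)_{N-i}$. The only caveats are terminological rather than substantive: the constraint $\pi_1\leq 2N+\nu$ bounds the number of column-pairs (not a ``total height in double-rows''), and a width-$1$ tail column occurs whenever the largest part is odd, not only when $\nu=1$.
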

\begin{proof}
Given any $\pi\in\mathcal{S}^2_{2N+\nu,\infty}$, since odd parts of $\pi$ are all distinct, for the conjugate $\pi'$ we must have
\begin{align}\label{gap<2}
\pi_{2i-1}'-\pi_{2i}' &= 0 \text{ or } 1,\; \forall i=1,2,\ldots \; .
\end{align}
This will guarantee that when we decompose $\pi$ into blocks of width $2$, $\{(\pi_1',\pi_2'),(\pi_3',\pi_4'),\ldots\}$, then we only have the following four types (filled with $\omega^2$-label):
$$
\begin{array}{ccccccc}
a \quad b & \qquad & a \quad  b & \qquad & a\quad  b & \qquad & a \quad  b\\
c \quad d & \qquad & c \quad  d & \qquad & c \quad  d & \qquad & c \quad  d\\
\vdots \quad \vdots & \qquad & \vdots \quad  \vdots & \qquad &\vdots \quad \vdots & \qquad & \vdots \quad  \vdots\\
a \quad  & \qquad & a \quad b & \qquad & a \quad  b & \qquad & a \quad  b\\
 \quad & \qquad & c \quad \phantom{b} & \qquad &\quad   & \qquad & c \quad  d\\
 &&&&&&\\
\hspace{10pt}\text{I } & \qquad & \hspace{10pt}\text{II} &    \qquad &\hspace{10pt} \text{III} &    \qquad & \hspace{10pt}\text{IV}   \\
\end{array}
$$
As for the constraint that the largest part of $\pi\leq 2N+\nu$, when $\nu=0$, the total number of blocks in all four types is $\leq N$; when $\nu=1$, the total number of blocks in all four types is $\leq N+1$, with equality only when the largest part is exactly $2N+1$ and hence there is a block of type I with a single cell labelled $a$. And also note that the blocks of type III and IV could be repeated while type I and II must all be distinct. The above analysis amounts to produce (\ref{gfS2bid}). These generating functions come most naturally by considering rows of the labelled Ferrers' diagram, even though the block types are defined by columns. Indeed, fix $0\leq i\leq N$,  the series
$$\dfrac{(-c;Q)_i(ab)^i}{(Q;Q)_i}
$$ generates exactly $i$ blocks of type II (distinct) and III, while the series
$$\dfrac{(-a;Q)_{N-i+\nu}}{(Q;Q)_{N-i}}
$$ accounts for at most $N-i+\nu$ blocks of type I (distinct) and IV, where the extra block when $v=1$ can be only of type I. 
\end{proof}

In \cite{iz}, Ishikawa and the second author considered the bounded version of Boulet's 
formula and obtained the following series expansion via application of results on associated Al-Salam-Chihara polynomials.
\begin{corollary}[\cite{iz}] Let $\nu\in\{0,1\}$. Then
\begin{align}
\label{iz1} \Psi_{2N+\nu,\infty}(a,b,c,d) &= \sum\limits_{i=0}^{N}\qbin{N}{i}{Q}(-a;Q)_{N-i+\nu}(-c;Q)_{i}(ab)^i, \\
\label{iz2} \Phi_{2N+\nu,\infty}(a,b,c,d) &= \dfrac{1}{(ac;Q)_{N+\nu}}\sum\limits_{i=0}^{N}\dfrac{(-a;Q)_{N-i+\nu}(-c;Q)_{i}(ab)^i}{(Q;Q)_{N-i}(Q;Q)_{i}}.
\end{align}

\end{corollary}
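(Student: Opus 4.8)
The plan is to derive both series expansions \eqref{iz1} and \eqref{iz2} directly from the bijective results already in hand, bypassing the Al-Salam-Chihara machinery of \cite{iz}. For \eqref{iz1}, recall that $\mathcal{DS}^2=\mathcal{D}$, so $\Psi_{2N+\nu,\infty}(a,b,c,d)=\sum_{\pi\in\mathcal{D}_{2N+\nu,\infty}}\omega^2_\pi(a,b,c,d)$. By the bounded decomposition (the $k=2$ case of Corollary~\ref{decomp1N}), $\psi_{2,2N+\nu}$ is a weight-preserving bijection $\mathcal{S}^2_{2N+\nu,\infty}\to \mathcal{DS}^2_{2N+\nu,\infty}\times\mathcal{E}^2_{2N+\nu,\infty}$, whence
\begin{equation*}
\sum_{\pi\in\mathcal{S}^2_{2N+\nu,\infty}}\omega^2_\pi=\Psi_{2N+\nu,\infty}(a,b,c,d)\cdot\sum_{\pi\in\mathcal{E}^2_{2N+\nu,\infty}}\omega^2_\pi.
\end{equation*}
Now invoke \eqref{gfE2b}, which gives $\sum_{\pi\in\mathcal{E}^2_{2N+\nu,\infty}}\omega^2_\pi=1/(Q;Q)_{\lfloor(2N+\nu)/2\rfloor}=1/(Q;Q)_N$ (the floor equals $N$ for both $\nu=0,1$), together with Theorem~\ref{gfS2b}, whose second displayed form is exactly $\frac{1}{(Q;Q)_N}\sum_{i=0}^N\qbin{N}{i}{Q}(-a;Q)_{N-i+\nu}(-c;Q)_i(ab)^i$. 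Dividing, the factor $1/(Q;Q)_N$ cancels and \eqref{iz1} drops out immediately.

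For \eqref{iz2}, the strategy is to relate $\Phi_{2N+\nu,\infty}$ to $\Psi_{2N+\nu,\infty}$ by the standard device of splitting off the "columns of even height that may repeat." More precisely, I would set up a weight-preserving bijection between an ordinary partition $\lambda\in\mathcal{P}_{2N+\nu,\infty}$ and a pair consisting of a strict partition $\pi\in\mathcal{D}_{2N+\nu,\infty}$ and an auxiliary object accounting for the repeated columns — concretely, conjugating and peeling off pairs of equal columns, exactly in the spirit of how \eqref{boulet} and \eqref{boulet-strict} differ only by the factor $1/(Q;Q)_\infty$. In the bounded setting the extra factor is not $1/(Q;Q)_\infty$ but a finite product; tracking the labels on the repeated pairs of columns (whose width-2 blocks contribute $ac$ per pair, up to the row-parity swap $(\star)$ which is harmless since pairs are even) and bounding the number of such pairs by $N+\nu$ should yield $1/(ac;Q)_{N+\nu}$. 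Then \eqref{iz2} follows by multiplying \eqref{iz1} — equivalently the first form in \eqref{gfS2bid} divided by nothing — by $1/(ac;Q)_{N+\nu}$, and dividing once more by $(Q;Q)_N$ to match the stated right-hand side. Alternatively, and perhaps more cleanly, one repeats the row-by-row block analysis of the proof of Theorem~\ref{gfS2b} directly on $\mathcal{P}_{2N+\nu,\infty}$: the four block types of that proof still appear, but now type~I and type~II blocks may also repeat, which replaces the two $(-\,\cdot\,;Q)$ finite products by the reciprocals $1/(a;Q)_{\bullet}$ and converts the bound on block counts into the denominator $(ac;Q)_{N+\nu}$.

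I would carry out the steps in this order: (1) state and apply Corollary~\ref{decomp1N} with $k=2$ to factor $\sum_{\mathcal{S}^2}\omega^2$; (2) substitute \eqref{gfE2b} and Theorem~\ref{gfS2b} and cancel to obtain \eqref{iz1}; (3) set up the conjugation/column-peeling bijection (or the modified row-block count) to introduce the factor $1/(ac;Q)_{N+\nu}$ and obtain \eqref{iz2}; (4) check the two small boundary cases $\nu=0$ and $\nu=1$ against the convention that the empty partition is the unique element when $N=0$.

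The main obstacle is step (3): getting the precise exponent $N+\nu$ in $(ac;Q)_{N+\nu}$ correct, rather than $N$ or $N+1$, requires a careful bookkeeping of which block types are allowed to carry the "extra" $(N+\nu)$-th block when $\nu=1$ and how the largest-part bound $\leq 2N+1$ interacts with a repeated (rather than distinct) tall column. The row-parity swap $(\star)$ from Theorem~\ref{decomp1} must be invoked to see that peeling repeated column-pairs is genuinely weight-preserving, and one must confirm that no double-counting arises between the $\mathcal{E}^2$-part already removed by $\psi_2$ and the repeated type-I/II columns introduced here — i.e., that the two mechanisms are accounting for disjoint features of the diagram (parts $\equiv 0\bmod 2$ repeated an even number of times, versus columns of the conjugate repeated). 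Once that disjointness is pinned down, everything else is the routine $q$-series manipulation already modelled in the proof of Theorem~\ref{gfS2b}.
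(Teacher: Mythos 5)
Your derivation of \eqref{iz1} is exactly the paper's: combine Corollary~\ref{decomp1N} (case $k=2$) with \eqref{gfE2b} and Theorem~\ref{gfS2b}, and cancel the common factor $1/(Q;Q)_{N}$. Nothing to add there.

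For \eqref{iz2} the paper merely cites the known $\Phi$--$\Psi$ connection, so your attempt to actually supply it is welcome, but your sketch has two concrete problems. First, the operation you describe --- peeling off pairs of equal \emph{columns} --- is the wrong one: under the $\omega^2$-labelling two adjacent equal columns read $a,c,a,c,\dots$ and $b,d,b,d,\dots$, so a pair of equal columns of odd height $2j+1$ carries weight $ab\,Q^{j}$, producing a factor $1/(ab;Q)_{\bullet}$ rather than $1/(ac;Q)_{N+\nu}$; moreover the residual partition (one with all column heights distinct) is not a strict partition. The operation you want is peeling off pairs of equal \emph{rows}, i.e.\ removing even multiplicities of repeated parts with the swap $(\star)$: a pair of equal odd parts $2j+1\le 2N+\nu$ contributes $ac\,Q^{j}$ with $0\le j\le N+\nu-1$, giving $1/(ac;Q)_{N+\nu}$, while a pair of equal even parts $2j\le 2N$ contributes $Q^{j}$ with $1\le j\le N$, giving $1/(Q;Q)_{N}$ --- and this last factor is precisely the extra division by $(Q;Q)_N$ that you introduce only as ad hoc bookkeeping ``to match the stated right-hand side''; combinatorially it must come from the repeated even parts, which your sketch never accounts for. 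Second, your ``cleaner'' alternative of rerunning the four-block-type analysis of Theorem~\ref{gfS2b} directly on $\mathcal{P}_{2N+\nu,\infty}$ fails: that classification rests on the inequality \eqref{gap<2} for the conjugate, which holds because odd parts are distinct; for an arbitrary partition (e.g.\ $(2,1,1,1)$, whose conjugate is $(4,1)$) a width-$2$ block can have its two columns differ in height by more than $1$, so new block types appear. With the row-peeling correction, your step (3) does go through, yielding $\Phi_{2N+\nu,\infty}=\Psi_{2N+\nu,\infty}\big/\bigl((Q;Q)_N(ac;Q)_{N+\nu}\bigr)$, which together with \eqref{iz1} gives \eqref{iz2}; there is also no double-counting issue, since one passes directly from $\mathcal{P}$ to $\mathcal{D}$ by removing all even multiplicities at once.
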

\begin{proof}
To get (\ref{iz1}), one simply combines (\ref{gfS2bid}) with Corollary~\ref{decomp1N} (case $k=2$) and (\ref{gfE2b}), and then cancel out the common factor $1/(Q;Q)_{\left\lfloor{N}/{2}\right\rfloor}$. The connection between the strict partition case (\ref{iz1}) and the ordinary partition case (\ref{iz2}) has already been noticed several times thus omitted here, see for example \cite[Theorem 4.1]{iz}, see also \cite{bu2,bou}.
\end{proof}

\begin{remark}
Extracting the coefficient of $t^iz^j$ in $\Psi_{2N+\nu,\infty}(qt,q/t,qz,q/z)$ using \eqref{iz1} and \eqref{rogers} yields Theorem~2.1 of 
\cite{bu2}.
Berkovich and Uncu~\cite[Theorem 6.1]{bu2} finitized Boulet's construction \cite{bou} to get:
\begin{align}
\label{bu1} \Psi_{2N+\nu,\infty}(a,b,c,d) &= \sum\limits_{i=0}^{N}\qbin{N}{i}{Q}(-a;Q)_{i+\nu}(-abc;Q)_{i}\dfrac{(ac;Q)_{N+\nu}}{(ac;Q)_{i+\nu}}(ab)^{N-i}, \\
\label{bu2} \Phi_{2N+\nu,\infty}(a,b,c,d) &= \sum\limits_{i=0}^{N}\dfrac{(-a;Q)_{i+\nu}(-abc;Q)_i(ab)^{N-i}}{(Q;Q)_{i}(ac;Q)_{i+\nu}(Q;Q)_{N-i}}.
\end{align}
They remarked that the transition from (\ref{bu1}, \ref{bu2}) to (\ref{iz1}, \ref{iz2}) requires a ${}_3\phi_1$ to ${}_2\phi_1$ transformation \cite[(III.8)]{gara}. We note that Andrews also noticed this ``two expansions for one function'' phenomenon in the specialized case at the end of his paper \cite{and2}.
\end{remark}

Next proposition highlights the role played by conjugation in our study.
\begin{proposition}
For $N$ and $M$ being any positive integers or $\infty$, the operation of conjugation, denoted as $\tau$, is a bijection from $\mathcal{P}_{N,M}$ to $\mathcal{P}_{M,N}$, such that for any $\pi\in\mathcal{P}_{N,M}$, we have 
$$\omega^2_{\pi}(a,b,c,d)=\omega^2_{\tau(\pi)}(a,c,b,d).
$$
 In terms of generating function, we have
\begin{align}\label{conju}
\Phi_{N,M}(a,b,c,d) &=\Phi_{M,N}(a,c, b,d).
\end{align}
\end{proposition}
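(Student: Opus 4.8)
The plan is to verify the identity $\omega^2_{\pi}(a,b,c,d)=\omega^2_{\tau(\pi)}(a,c,b,d)$ cell by cell, since the claim that $\tau$ is a bijection $\mathcal{P}_{N,M}\to\mathcal{P}_{M,N}$ is just the standard fact that conjugation swaps the largest part with the number of parts, and $\tau$ is an involution on $\mathcal{P}$. Once the weight identity is established for each $\pi$, the generating function statement \eqref{conju} follows immediately by summing over $\mathcal{P}_{N,M}$ and re-indexing the sum via $\tau$.

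First I would set up coordinates: a cell of $\pi$ in row $r$ (counted from the top, starting at $r=1$) and column $c$ (from the left, starting at $c=1$) is labelled under $\omega^2$ by one of $a,b,c,d$ according to the parities of $r$ and $c$. Concretely, recalling the cyclic filling of Figure~\ref{2weights}: odd-indexed rows get $a,b,a,b,\dots$ and even-indexed rows get $c,d,c,d,\dots$. So the label is $a$ when $r$ odd and $c$ odd; $b$ when $r$ odd and $c$ even; $c$ (the variable) when $r$ even and $c$ odd; $d$ when $r$ even and $c$ even. Under conjugation the cell $(r,c)$ of $\pi$ maps to the cell $(c,r)$ of $\tau(\pi)$, so the roles of row-parity and column-parity are interchanged. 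Tracking the four cases: a cell with $(r,c)=(\text{odd},\text{odd})$ stays labelled $a$; $(\text{odd},\text{even})$ carries label $b$ in $\pi$ but its image has $(r',c')=(\text{even},\text{odd})$, hence label $c$ in $\tau(\pi)$; $(\text{even},\text{odd})$ carries label $c$ in $\pi$ but its image has label $b$ in $\tau(\pi)$; and $(\text{even},\text{even})$ stays $d$. Thus the total contribution is preserved under the simultaneous swap $b\leftrightarrow c$ of the variables, which is exactly $\omega^2_{\pi}(a,b,c,d)=\omega^2_{\tau(\pi)}(a,c,b,d)$.

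For the generating function identity, I would write
\begin{align*}
\Phi_{N,M}(a,b,c,d)=\sum_{\pi\in\mathcal{P}_{N,M}}\omega^2_{\pi}(a,b,c,d)
=\sum_{\pi\in\mathcal{P}_{N,M}}\omega^2_{\tau(\pi)}(a,c,b,d)
=\sum_{\sigma\in\mathcal{P}_{M,N}}\omega^2_{\sigma}(a,c,b,d)
=\Phi_{M,N}(a,c,b,d),
\end{align*}
where the third equality uses that $\tau$ is a bijection $\mathcal{P}_{N,M}\to\mathcal{P}_{M,N}$ (substituting $\sigma=\tau(\pi)$). The convention that the empty partition is the unique element of $\mathcal{P}_{N,M}$ when $N=0$ or $M=0$ is consistent with this, since $\tau$ fixes the empty partition and its $\omega^2$-weight is $1$.

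I expect no serious obstacle here; the only point demanding a little care is getting the cell-labelling convention exactly right and confirming that it is column-parity (within a row) and row-parity that jointly determine the label — that is, that the filling is genuinely the "product" filling one reads off Figure~\ref{2weights} — so that conjugation cleanly swaps $b$ and $c$ while fixing $a$ and $d$. It is worth noting explicitly that $a$ and $d$ sit on the "diagonal" parity classes $(\text{odd},\text{odd})$ and $(\text{even},\text{even})$, which are preserved by transposition, whereas $b$ and $d$ occupy the off-diagonal classes that get interchanged; this is the structural reason the identity takes the form it does.
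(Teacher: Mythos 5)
Your proof is correct and takes essentially the same approach as the paper's: conjugation transposes the diagram, so the filling of $\tau(\pi)$ reads $a,c,a,c,\dots$ in odd-indexed rows and $b,d,b,d,\dots$ in even-indexed rows, which amounts exactly to the swap $b\leftrightarrow c$ of variables. (One minor slip in your closing sentence: the off-diagonal parity classes are occupied by $b$ and $c$, not ``$b$ and $d$''.)
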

\begin{proof}
Simply note that after applying conjugation, a partition in $\mathcal{P}_{N,M}$ becomes a partition in $\mathcal{P}_{M,N}$, and the labelling on the Ferrers' diagram now becomes $\{a,c,a,c,\ldots\}$ in the odd-indexed rows, and $\{b,d,b,d,\ldots\}$ in the even-indexed rows.
\end{proof}
As an immediate application, we could apply \eqref{conju} to derive two different expansions of the generating function for $\Phi_{\infty, 2M+\mu}(a,b,c,d)$ that are equivalent to (\ref{iz2}) and (\ref{bu2}) respectively, we leave it as an exercise for the interested readers.

We go on with some further observations that hopefully clarify the mystery around identities (\ref{iz1}, \ref{iz2}) and (\ref{bu1}, \ref{bu2}).

First off, we would like to remark that our definition of $2$-strict partitions and the way of dissecting $2$-strict partitions vertically into blocks of width $2$, can be viewed as a natural dual of Boulet's construction in \cite{bou} to prove (\ref{boulet}) and (\ref{boulet-strict}). More precisely, for each $\omega^2$-labelled Ferrers' diagram, we remove pairs of {\it rows} with odd length to get $2$-strict partitions and then read {\it columns} of these partitions by pairs, while Boulet's approach was to remove pairs of {\it columns} with odd length and read {\it rows} by pairs. The connection is clearly established via conjugation. 

Secondly, we note that Yee's \cite{yee} and Sills' \cite{sil} methods, when interpreted appropriately, are equivalent to ours in the special case 
$$(a,b,c,d)=(yzq,yq/z,zq/y,q/yz).
$$ 
Moreover, with this comparison in mind, it now becomes clear why we should have two different expansions (\ref{iz2}) and (\ref{bu2}) (resp. (\ref{iz1}) and (\ref{bu1})) for the same function 
$$\Phi_{2N+\nu,\infty}(a,b,c,d)\quad  (\text{resp.}\quad  \Psi_{2N+\nu,\infty}(a,b,c,d)). 
$$
The  reason is that  the constraint ($2N+\nu,\infty$) is asymmetric with respect to conjugation. But when we consider the infinite case $\Phi(a,b,c,d)$ (resp. $\Psi(a,b,c,d)$) or the doubly-bounded case $\Phi_{N,M}(a,b,c,d)$ (resp. $\Psi_{N,M}(a,b,c,d)$), these two dual approaches will only lead to essentially one expansion. Indeed, the two seemingly different expansions will become identical upon change of variables. We shall give more details in next subsection.

\subsection{Doubly-bounded Case}
Naturally, the next step is to consider the most general functions $\Phi_{N,M}(a,b,c,d)$ and $\Psi_{N,M}(a,b,c,d)$. We note that in an unpublished work, Chen, Lai and Wu \cite{clw} analytically obtained the expansions for all four cases of $\Phi_{N,M}(a,b,c,d)$ according to parity of $N$ and $M$, namely, 
$(N,M)\equiv (0,0),(0,1),(1,0),(1,1) \pmod 2$.
We note that Berkovich and Uncu~\cite[Theorems 6.2 and 6.3]{bu2} also obtained 
similar expansions for $\Phi_{N,M}$. 
%

We first give the doubly-bounded version of Theorem~\ref{gf2strict}, and then we can state the unified expansion for $\Phi_{N,M}(a,b,c,d)$, with another identity connecting $\Psi_{N,M}(a,b,c,d)$ as well.
\begin{theorem}\label{bound-s2}
Given any non-negative integers $N,M$, we have:
\begin{align}
\label{gfS2n0m1}
\sum_{\pi\in\mathcal{S}^2_{2N,2M+1}}\omega^2_{\pi}(a,b,c,d) &= \sum\qbin{M+1}{m_1}{Q}a^{m_1}Q^{\binom{m_1}{2}}\\
\nonumber &\times\qbin{M+m_2}{m_2}{Q}(ab)^{m_2} \qbin{M}{m_3}{Q}(abc)^{m_3}Q^{\binom{m_3}{2}}\qbin{M+m_4}{m_4}{Q},\\
\label{gfS2n1m1}
\sum_{\pi\in\mathcal{S}^2_{2N+1,2M+1}}\omega^2_{\pi}(a,b,c,d) &= \sum
\qbin{M}{m_1}{Q}(1+a)a^{m_1}Q^{\binom{m_1+1}{2}}\\
\nonumber &\times\qbin{M+m_2}{m_2}{Q}(ab)^{m_2} \qbin{M}{m_3}{Q}(abc)^{m_3}Q^{\binom{m_3}{2}}\qbin{M+m_4}{m_4}{Q},\\
\label{gfS2n0m0} 
\sum_{\pi\in\mathcal{S}^2_{2N,2M}}\omega^2_{\pi}(a,b,c,d) &= \sum\qbin{M}{m_1}{Q}a^{m_1}Q^{\binom{m_1}{2}}\\
\nonumber &\times\qbin{M+m_2-1}{m_2}{Q}(ab)^{m_2}\qbin{M}{m_3}{Q}(abc)^{m_3}Q^{\binom{m_3}{2}}\qbin{M+m_4}{m_4}{Q},
\end{align}
\begin{align}
\label{gfS2n1m0} 
\sum_{\pi\in\mathcal{S}^2_{2N+1,2M}}\omega^2_{\pi}(a,b,c,d) &= \sum
\qbin{M-1}{m_1}{Q}(1+a)a^{m_1}Q^{\binom{m_1+1}{2}}\\
\nonumber &\times\qbin{M+m_2-1}{m_2}{Q}(ab)^{m_2}\qbin{M}{m_3}{Q}(abc)^{m_3}Q^{\binom{m_3}{2}}\qbin{M+m_4}{m_4}{Q},
\end{align}
where the implicit sums are over all the quadruples $(m_1,m_2,m_3,m_4)\in \N^4$ satisfying
$m_1+m_2+m_3+m_4 = N$.
\end{theorem}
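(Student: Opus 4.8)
The plan is to mimic the proof of Theorem~\ref{gfS_3N012}, adapting the vertical-block decomposition of Corollary~\ref{decomp1N} (case $k=2$) to the \emph{doubly}-bounded setting. Given $\pi\in\mathcal S^2_{2N+\nu,2M+\mu}$, I would work with the conjugate $\pi'$ as in the proof of Theorem~\ref{gfS2b}: since the odd parts of $\pi$ are distinct, the gap condition \eqref{gap<2} holds for $\pi'$, so dissecting $\pi'$ into vertical blocks of width $2$ produces exactly the four block types I--IV drawn in the proof of Theorem~\ref{gfS2b} (equivalently, dissecting $\pi$ horizontally into the four row-pair types). The bound on the \emph{largest part} of $\pi$ (here $2N+\nu$) controls the \emph{number of blocks}, giving the constraint $m_1+m_2+m_3+m_4=N$ with the familiar $(1+a)$ correction coming from a possible single extra top row when $\nu=1$. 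The new ingredient is that the bound on the \emph{number of parts} of $\pi$ (here $2M+\mu$) bounds the \emph{length of each block}, which is what turns the infinite products $1/(Q;Q)_\infty$, $(-a;Q)_\infty$, etc.\ into the $q$-binomial coefficients $\qbin{M}{m_i}{Q}$, $\qbin{M+m_i}{m_i}{Q}$, and so on.

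Concretely, I would fix the block counts $(m_1,m_2,m_3,m_4)$ and compute the generating function for each of the four families separately. A type-III or type-IV block may repeat, so $m_3$ (resp.\ $m_4$) blocks whose common length is at most $M$ (allowing repeats) are generated by $\qbin{M}{m_3}{Q}(abc)^{m_3}Q^{\binom{m_3}{2}}$ (resp.\ $\qbin{M+m_4}{m_4}{Q}$); here the shift $M+m_i$ versus $M$ records whether the partition formed by the block-lengths is weakly or strictly decreasing, exactly as in the single-bounded computation \eqref{gfS2bid}. A type-I or type-II block must have distinct lengths, each at most $M$ (with the parity subtlety from $\mu$, since a type-I/II block of odd length contributes an extra bottom row), producing $\qbin{M+\text{(shift)}}{m_1}{Q}a^{m_1}Q^{\binom{m_1}{2}}$ and its type-II analogue; the precise shift ($M$, $M-1$, or $M+m_2$, $M+m_2-1$) depends on $\mu\in\{0,1\}$ and on the block type, which accounts for the four separate formulas \eqref{gfS2n0m1}--\eqref{gfS2n1m0}. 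Multiplying the four factors and summing over $(m_1,m_2,m_3,m_4)$ with $\sum m_i=N$ gives the claimed identities; the $(1+a)$ factors in \eqref{gfS2n1m1} and \eqref{gfS2n1m0} are the $\nu=1$ top-row correction already seen in \eqref{gfDS3b3N+2} and \eqref{gfS2bid}.

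The main obstacle will be \emph{bookkeeping the boundary shifts correctly}: each of the two independent bounds ($N$ on the largest part, $M$ on the number of parts) interacts with each of the four block types and with the distinct-vs-repeatable nature of that type, and the half-integer corrections (the $Q^{\binom{m_i}{2}}$ versus $Q^{\binom{m_i+1}{2}}$, the $M$ versus $M\pm 1$ versus $M+m_i$) must be pinned down by a careful analysis of which rows are "complete" blocks and which are the leftover single or half rows at the top/bottom. I would handle this by reducing each of the four one-type subproblems to a standard $q$-series lemma: partitions into at most $m$ parts each $\le M$ are counted by $\qbin{M+m}{m}{Q}$, and into $m$ distinct parts each $\le M$ by $\qbin{M}{m}{Q}Q^{\binom{m}{2}}$ (or $\qbin{M}{m}{Q}Q^{\binom{m+1}{2}}$ with a "$\ge 1$" constraint), then track how the extra top row when $\nu=1$ and the extra bottom rows of odd-length type-I/II blocks when $\mu=1$ nudge these bounds. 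Since all four cases run in parallel, I would write out one case ($\nu=\mu=1$, the richest) in full and indicate the sign/shift changes for the others, exactly in the spirit of the abbreviated proofs elsewhere in this section.
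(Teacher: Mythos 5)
Your proposal follows essentially the same route as the paper: the paper's own proof is a brief sketch stating that the vertical width-$2$ block decomposition of Theorem~\ref{gfS2b} carries over, with the new bound on the number of parts converting each infinite product (e.g.\ $(-a;Q)_\infty$ for type I, $1/(ab;Q)_\infty$ for the repeatable $(ab)$-weighted blocks) into the corresponding $q$-binomial factor, and with parity cases handled separately. Your outline — bounding the number of blocks by $N$ via the largest part, bounding block heights by $M$ via the number of parts, invoking the standard $q$-binomial counts for bounded distinct/repeatable parts, and isolating the $(1+a)$ correction and the $M$ versus $M\pm1$ shifts as the delicate bookkeeping — matches this plan, and is if anything more explicit than the published argument.
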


\begin{proof}
Similar arguments as in the proof of Theorem~\ref{gf32strict} and Theorem~\ref{gfS2b} can be applied to get (\ref{gfS2n0m1})$\sim$(\ref{gfS2n1m0}). The new constraint on the number of parts is reflected in the expression as $q$-binomial coefficients. For instance, in (\ref{gfS2n0m1}) $\qbin{M+1}{m_1}{Q}a^{m_1}Q^{\binom{m_1}{2}}$ replaces $(-a;Q)_{\infty}$ for generating type I blocks, and $\qbin{M+m_2}{m_2}{Q}(ab)^{m_2}$ replaces $\dfrac{1}{(ab;Q)_{\infty}}$ for type II and so on.  And one needs to make some extra effort to take care of different cases corresponding to the parity of the constraints. The details are omitted.
\end{proof}

The following result gives the explicit formulae for the bounded versions of
both $\Phi$ and $\Psi$ as multiple sums.
\begin{theorem}
\label{fznm}
For $N,M$ being non-negative integers, $\nu,\mu=0 \text{ or } 1$ such that $N+\nu\geq 1$, we have the following expansions:
\begin{align}\label{phinm}
&\phantom{=\;}\Phi_{2N+\nu,2M+\mu}(a,b,c,d) \\
\nonumber &=\delta_{0\mu}(ac)^M\qbin{N+M+\nu-1}{M}{Q}+\sum\limits_{k=0}^{M+\mu-1}(ac)^k\qbin{N+k+\nu-1}{k}{Q}\\
\nonumber &\times \sum\limits_{m_1,m_2,m_3,m_4\geq 0\atop m_1+m_2+m_3+m_4 = N}^{N}\qbin{M-k+m_4}{m_4}{Q}\qbin{M-k+\mu-\nu}{m_1}{Q}(1+a\nu)a^{m_1}Q^{\binom{m_1+\nu}{2}}\\
\nonumber &\times\qbin{M-k+\mu-1+m_2}{m_2}{Q}(ab)^{m_2}\qbin{M-k}{m_3}{Q}(abc)^{m_3}Q^{\binom{m_3}{2}},\\
\label{psinm}
&\phantom{=\;}\Psi_{N,M}(a,b,c,d) \\
\nonumber &=\sum_{m=0}^{\lfloor M/2\rfloor}(-1)^m
 \sum_{k=0}^m {{\lfloor N/2\rfloor} \brack k}_Q{{\lceil N/2\rceil} \brack m-k}_Q(ac)^{m-k} Q^{k(k+1-m)+{m\choose 2}}\Phi_{N,M-2m}(a,b,c,d).
\end{align}
\end{theorem}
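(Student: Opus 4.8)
The plan is to obtain both expansions by ``peeling off repeated parts'' in two complementary ways, reducing everything to the doubly‑bounded generating functions of $\mathcal S^2$ recorded in Theorem~\ref{bound-s2}, and then performing one elementary triangular inversion. Throughout write $Q=abcd$, $S^2_{N,M}:=\sum_{\pi\in\mathcal S^2_{N,M}}\omega^2_{\pi}(a,b,c,d)$, and recall $\mathcal{DS}^2=\mathcal D$. The first decomposition removes from a partition $\pi$ the even copies of its repeated \emph{odd} parts: what remains lies in $\mathcal S^2$, while the removed parts form a partition all of whose parts are odd and occur with even multiplicity. Since each such value is odd and an even number of rows is deleted each time, a removed block with value $2e+1$ and multiplicity $2t$ carries the \emph{position‑independent} weight $(ac)^{t(e+1)}(bd)^{te}=(ac)^{t}Q^{te}$ --- exactly the mechanism in the proof of Theorem~\ref{decomp1}. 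Under the bounds ``largest part $\le N$, at most $M$ parts'' a batch of $j:=\sum t$ removed pairs of rows is thus encoded by $(ac)^{j}$ times an arbitrary partition with at most $j$ parts each $\le\lceil N/2\rceil-1$, while the $\mathcal S^2$‑remnant keeps at most $M-2j$ parts; summing over $j$,
\[
\Phi_{N,M}=\sum_{j\ge 0}(ac)^{j}\qbin{j+\lceil N/2\rceil-1}{j}{Q}\,S^2_{N,M-2j}.
\]
Dually, the doubly‑bounded case $k=2$ of Corollary~\ref{decomp1N} for $\psi_2$ (which peels off the repeated \emph{even} parts), together with the halving bijection $\mathcal E^2\cong\mathcal P$ (so $\omega^2_{\pi^2}=Q^{|\pi^{2*}|}$ and $\ell(\pi^2)=2\ell(\pi^{2*})$) and the generating function $Q^{l}\qbin{l+\lfloor N/2\rfloor-1}{l}{Q}$ for partitions with exactly $l$ parts each $\le\lfloor N/2\rfloor$, gives
\[
S^2_{N,M}=\sum_{l\ge 0}Q^{l}\qbin{l+\lfloor N/2\rfloor-1}{l}{Q}\,\Psi_{N,M-2l}.
\]

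For \eqref{phinm} I will put $N=2N'+\nu$ and $M=2M'+\mu$ in the first identity, so that $\lceil N/2\rceil-1=N'+\nu-1$, and then substitute into each $S^2_{2N'+\nu,\,2(M'-j)+\mu}$ the appropriate closed form among \eqref{gfS2n0m1}--\eqref{gfS2n1m0}; its fourfold sum over $(m_1,m_2,m_3,m_4)$ with $\sum m_i=N'$ reproduces the inner sum of \eqref{phinm} after the replacement $M\mapsto M'-j$. The extreme term $j=M'$ is exceptional only when $\mu=0$, where $S^2_{2N'+\nu,0}=1$ and the fourfold sum degenerates; this term is then precisely the isolated summand $\delta_{0\mu}(ac)^{M'}\qbin{N'+M'+\nu-1}{M'}{Q}$, and the range $0\le j\le M'+\mu-1$ supplies the displayed double sum. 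The hypothesis $N+\nu\ge 1$ merely keeps $\lceil N/2\rceil\ge 1$, so that the $j=0$ term equals $S^2_{N,M}$.

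For \eqref{psinm} I will invert the two displayed identities of the preceding paragraph, both of which are lower‑triangular in the ``number of parts'' index. Their kernels are $\sum_{l\ge0}Q^{l}\qbin{l+\lfloor N/2\rfloor-1}{l}{Q}z^{l}=1/(Qz;Q)_{\lfloor N/2\rfloor}$ and $\sum_{j\ge0}(ac)^{j}\qbin{j+\lceil N/2\rceil-1}{j}{Q}z^{j}=1/(acz;Q)_{\lceil N/2\rceil}$ (elementary series--product identities of Euler), whose reciprocals are the \emph{finite} products $(Qz;Q)_{\lfloor N/2\rfloor}$ and $(acz;Q)_{\lceil N/2\rceil}$; their coefficients therefore invert the two relations:
\[
\Psi_{N,M}=\sum_{k\ge 0}(-1)^{k}\qbin{\lfloor N/2\rfloor}{k}{Q}Q^{\binom{k+1}{2}}S^2_{N,M-2k},\qquad S^2_{N,M}=\sum_{j\ge 0}(-1)^{j}\qbin{\lceil N/2\rceil}{j}{Q}Q^{\binom{j}{2}}(ac)^{j}\Phi_{N,M-2j}.
\]
Inserting the second into the first, writing $m=k+j$, and using $\binom{k+1}{2}+\binom{m-k}{2}=\binom{m}{2}+k(k+1-m)$ collapses the double sum to exactly the right‑hand side of \eqref{psinm}.

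I expect the genuinely delicate point to be the matching inside the proof of \eqref{phinm}: one must check, parity case by parity case $(\nu,\mu)\in\{0,1\}^2$ and at the boundary $j=M'$, that the reindexed fourfold sum of Theorem~\ref{bound-s2} agrees termwise with the inner sum of \eqref{phinm} --- in particular that the shifted upper parameters $M-k+m_4$, $M-k+\mu-\nu$, $M-k+\mu-1+m_2$, $M-k$ all come out right and that the $\mu=0$ degeneration is absorbed exactly by the $\delta_{0\mu}$ term. Everything else reduces to routine $q$‑binomial and Euler‑type manipulations, and the small cases $N\le 1$ are checked directly.
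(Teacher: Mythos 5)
Your argument is correct and, for \eqref{phinm}, coincides with the paper's proof: peel off the even copies of repeated odd parts (a batch of $k$ pairs contributing $(ac)^k\qbin{N+k+\nu-1}{k}{Q}$), reduce the remnant to Theorem~\ref{bound-s2}, and isolate the degenerate $k=M$, $\mu=0$ term as the $\delta_{0\mu}$ summand; your parity-by-parity matching of the shifted upper parameters is exactly the (unwritten) verification the paper also relies on. For \eqref{psinm} you follow the same algebraic skeleton — expand the finite products $(z^2Q;Q)_{\lfloor N/2\rfloor}(z^2ac;Q)_{\lceil N/2\rceil}$ by Euler's identity \eqref{euler1} and extract coefficients of $z^M$ — but where the paper imports the connection formula \eqref{phipsi} from \cite[Theorem~4.1]{iz}, you rederive it by composing your first peeling with the $k=2$ case of Corollary~\ref{decomp1N} (giving $\sum_{\pi\in\mathcal{S}^2_{N,M}}\omega^2_\pi=\sum_{l}Q^{l}\qbin{l+\lfloor N/2\rfloor-1}{l}{Q}\Psi_{N,M-2l}$). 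This makes the proof of \eqref{psinm} self-contained and explains the two finite Pochhammer factors in \eqref{phipsi} as the kernels of the two complementary decompositions (removing repeated odd parts versus repeated even parts); the only extra cost is the exponent identity $\binom{k+1}{2}+\binom{m-k}{2}=\binom{m}{2}+k(k+1-m)$ in the final collapse, which checks out.
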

\begin{proof}
To show \eqref{phinm}, we note that for a given unrestricted partition $\pi\in\mathcal{P}_{2N+\nu,2M+\mu}$, we can repeatedly remove even copies of parts with odd length and still keep the $\omega$-label as we showed in the proof of Theorem~\ref{decomp1}. The remaining partition is now $2$-strict, which we can use 
Theorem~\ref{bound-s2} to deal with. On the other hand, the removed parts (say $k$ pairs of odd parts) are generated by $(ac)^k\qbin{N+k+\nu-1}{k}{Q}$. The first term $\delta_{0\mu}(ac)^M\qbin{N+M+\nu-1}{M}{Q}$ accounts for the special case when $\mu=0$ and there is nothing remaining after our removal process.

Note that
\begin{align}
\frac{\sum_{\pi\in\mathcal{P}_{N,\infty}}\omega^2_{\pi}(a,b,c,d)z^{\ell(\pi)}}{1-z}&=\sum_{M\geq 0} \Phi_{N,M}(a,b,c,d)z^M,\label{key1}\\
\frac{\sum_{\pi\in\mathcal{D}_{N,\infty}}\omega^2_{\pi}(a,b,c,d)z^{\ell(\pi)}}{1-z}&=\sum_{M\geq 0} \Psi_{N,M}(a,b,c,d)z^M.\label{key2}
\end{align}
It follows from the connection formula \cite[Theorem 4.1]{iz} between 
the numerators of 
the left-hand sides of \eqref{key1} and \eqref{key2}
that  
\begin{align}\label{phipsi}
\sum_{M\geq 0} \Phi_{N,M}(a,b,c,d)z^M=
\frac{\sum_{M\geq 0} \Psi_{N,M}(a,b,c,d)z^M}{(z^2 Q;Q)_{\lfloor N/2\rfloor}(z^2 ac;Q)_{\lceil N/2\rceil}}.
\end{align}
Using the known identity~\cite[(3.3.6)]{andtp} 
\begin{align}\label{euler1}
(z;q)_N=\sum_{j=0}^N{N\brack j}_q (-1)^jz^j q^{j(j-1)/2},
\end{align}
we derive \eqref{psinm} from \eqref{phipsi}.
\end{proof}


\begin{proposition}\label{lem} We have 
\begin{align}\label{lem1}
[s^0t^j]\Phi_{2N+\nu, 2M+\mu}&(qs,q/s,qt,q/t)=q^{j(2j+1)}{M\brack j}_{q^4}{2M+N+\mu-j\brack N-j}_{q^2},\\
\label{lem2}
[s^it^0]\Phi_{2N+\nu, 2M+\mu}&(qs,q/s,qt,q/t)\\
&=
q^{i+2(i+\nu)(i+\nu-1)}{M+\mu-\nu\brack i}_{q^4}
{2M+\mu+N-i\brack N-i}_{q^2}\nonumber \\
&+\nu q^{i+2(i+\nu-1)(i+\nu-2)}{M+\mu-\nu\brack i-1}_{q^4}{2M+\mu+N-i+1\brack N-i+1}_{q^2}.\nonumber
\end{align}
\end{proposition}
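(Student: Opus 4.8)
The plan is to read off both coefficients directly from the multiple-sum expansion~\eqref{phinm} of Theorem~\ref{fznm}. Under the substitution $(a,b,c,d)=(qs,q/s,qt,q/t)$ one has $Q=q^4$, $ab=q^2$, $ac=q^2st$, $abc=q^3t$ and $a=qs$, so in \eqref{phinm} the only factors carrying the variables $s,t$ are $(ac)^k$, the leading $(ac)^M$, $a^{m_1}$, $(abc)^{m_3}$, and the two-term factor $1+a\nu$. Writing $\epsilon\in\{0,1\}$ for the choice of summand in $1+a\nu$ (with $\epsilon=1$ allowed only when $\nu=1$), the term of \eqref{phinm} indexed by $(k,m_1,m_2,m_3,m_4;\epsilon)$ contributes to the monomial $s^{k+m_1+\epsilon}t^{k+m_3}$, while the isolated $\delta_{0\mu}$ term contributes $s^Mt^M$. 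Thanks to the convention $\qbin{n}{k}{q}=0$ for $n<k$, the boundary cases will require no separate treatment and one can argue uniformly.

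For \eqref{lem1} I would extract the monomial $s^0t^j$: this forces $k=m_1=\epsilon=0$, hence $m_3=j$ and $m_2+m_4=N-j$, and the isolated $\delta_{0\mu}$ term intervenes only when $M=0$ (where $j=0$ and it is checked to agree). Collecting the surviving powers of $q$ and the surviving $q$-binomials — using $\binom{\nu}{2}=0$, $\qbin{M+\mu-\nu}{0}{q^4}=1$ and $3j+4\binom{j}{2}=j(2j+1)$ — this should give
\[
[s^0t^j]\,\Phi_{2N+\nu,2M+\mu}(qs,q/s,qt,q/t)=q^{j(2j+1)}\qbin{M}{j}{q^4}\sum_{m_2+m_4=N-j}\qbin{M+m_4}{m_4}{q^4}\,q^{2m_2}\,\qbin{M+\mu-1+m_2}{m_2}{q^4}.
\]
The remaining sum will be evaluated by the elementary identity, valid for $M+\mu\geq1$,
\[
\sum_{k=0}^{L}\qbin{M+k}{k}{q^4}\,q^{2(L-k)}\,\qbin{M+\mu-1+L-k}{L-k}{q^4}=\qbin{2M+\mu+L}{L}{q^2},
\]
which follows from the Cauchy form of the $q$-binomial theorem $\sum_{n\geq0}\qbin{R+n}{n}{q}z^n=1/(z;q)_{R+1}$: the left side is $[z^L]\,1/\bigl((z;q^4)_{M+1}(zq^2;q^4)_{M+\mu}\bigr)$, and splitting a $q^2$-shifted factorial into its even- and odd-indexed parts gives $(z;q^2)_{2M+\mu+1}=(z;q^4)_{M+1}(zq^2;q^4)_{M+\mu}$. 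Taking $L=N-j$ then yields \eqref{lem1}.

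For \eqref{lem2} I would extract $s^it^0$, which forces $k=m_3=0$ and splits into two cases according to $\epsilon$. When $\epsilon=0$ one gets $m_1=i$ and $m_2+m_4=N-i$; the same evaluation of the inner sum, together with $4\binom{i+\nu}{2}=2(i+\nu)(i+\nu-1)$, should produce the first term of \eqref{lem2}. When $\nu=1$ and $\epsilon=1$, the factor $a=qs$ supplies an extra $s$ and an extra $q$, so $m_1=i-1$ and $m_2+m_4=N-i+1$; the inner sum collapses in the same way, and since $4\binom{i}{2}=2i(i-1)$ equals $2(i+\nu-1)(i+\nu-2)$ for $\nu=1$, this recovers the $\nu$-term of \eqref{lem2}. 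Adding the two contributions finishes the proof.

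The whole argument is essentially bookkeeping of $q$-exponents. The one genuine ingredient is the factorial-splitting identity above; the part calling for care will be checking that the isolated $\delta_{0\mu}$ term and the two summands of $1+a\nu$ are accounted for consistently in the small-$M$ and $i=0$ boundary cases — precisely where the ``$0$ otherwise'' convention on $q$-binomials does the work.
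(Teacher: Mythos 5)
Your proposal is correct and follows essentially the same route as the paper: substitute $(a,b,c,d)=(qs,q/s,qt,q/t)$ into \eqref{phinm}, observe that extracting $s^0t^j$ (resp.\ $s^it^0$) kills all terms with $k\geq 1$ and $m_1\geq 1$ (resp.\ $k\geq 1$ and $m_3\geq 1$), and evaluate the surviving convolution over $m_2+m_4=L$ by a $q$-binomial convolution identity. Your factorial-splitting identity is exactly the paper's \eqref{conv2} (stated there in base $q$ and applied with $q\to q^2$), derived from the same product decomposition $(z;q^2)_{2M+\mu+1}=(z;q^4)_{M+1}(zq^2;q^4)_{M+\mu}$ together with the $q$-binomial series.
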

\begin{proof} 
Recall~\cite[p. 36]{andtp} that
\begin{align}\label{euler2}
\frac{1}{(z;q)_{N}}=\sum_{j=0}^\infty {N-1+j\brack j}_q z^j.
\end{align}
Since  $(z;q^2)_{N+1} (zq;q^2)_{N+\mu}=(z;q)_{2N+1+\mu}$ for $\mu\in \{0,1\}$,  it follows from \eqref{euler2} that
\begin{align}\label{conv2}
\sum_{i+j=L}{N+i\brack i}_{q^2}   {N+\mu-1+j\brack j}_{q^2}q^j= {2N+\mu+L\brack L}_{q}.
\end{align}
Now, making the previous  substitution  $(a,b,c,d)=(qs,q/s,qt,q/t)$ in \eqref{phinm}, 
we have $Q=q^4$, $ac=q^2st$ and $abc=q^3t$. Hence, extracting the coefficient of $s^0t^j$ (resp. $s^it^0$), i.e., 
setting $s=0$ and then extracting the coefficient of $t^j$ (resp. setting $t=0$ and then extracting the coefficient of $s^i$), 
 we obtain \eqref{lem1} (resp. \eqref{lem2}). Indeed, by  \eqref{phinm} we have 
 \begin{align*}
 &[s^0t^j]\Phi_{2N+\nu, 2M+\mu}(qs,q/s,qt,q/t)\\
 &=[t^j]\sum_{m_2+m_3+m_4=N} {M+m_4\brack m_4}_{q^4}
 {M+\mu-1+m_2\brack m_2}_{q^4} q^{2m_2}{M\brack m_3}_{q^4}q^{m_3(2m_3+1)} t^{m_3}\\
 &=q^{j(2j+1)}{M\brack j}_{q^4}\sum_{m_2+m_4=N-j} {M+m_4\brack m_4}_{q^4}
 {M+\mu-1+m_2\brack m_2}_{q^4} q^{2m_2}.
 \end{align*}
 This yields \eqref{lem1} by applying \eqref{conv2}.
\end{proof}

Let ${P}_N(i,j,m,q)$ be the generating function for the number of 
ordinary partitions with largest  part $N$ with $i$ 
odd-indexed, $j$ even-indexed odd parts and  at most $m$ even parts. It's not difficult to see that
\begin{align}\label{key-formula}
{P}_N(i,j,m,q)=[s^i t^j]\left(\Phi_{N,m+i+j}(qs,q/s,qt,q/t)-\Phi_{N-1,m+i+j}(qs,q/s,qt,q/t)  \right).
\end{align}
From the above proposition we derive immediately the following explicit formulae by utilizing the well known Pascal-like relations for the $q$-binomial coefficients, see \cite[(3.3.3)-(3.3.4)]{andtp}.
\begin{corollary}\label{ana-final-bu}
Let $i, j,m$ and $N$ be non-negative integers. Then,
\begin{align}
{P}_{2N}(0,j,m,q)&=q^{2N+j(2j-1)}{\lfloor\frac{m+j}{2} \rfloor \brack j}_{q^4}{m+N-1\brack N-j}_{q^2},\\
{P}_{2N}(i,0,m,q)&=q^{2N+i(2i+1)}{\lceil \frac{m+i}{2}\rceil-1\brack i}_{q^4}
{m+N-1\brack N-i}_{q^2},\nonumber \\
{P}_{2N+1}(i,0,m,q)&=q^{2N+i(2i-3)+2}{\lceil \frac{m+i}{2}\rceil-1\brack i-1}_{q^4}
{m+N\brack N-i+1}_{q^2},\quad \text{for }i\geq 1.\nonumber
\end{align}
\end{corollary}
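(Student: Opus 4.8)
The plan is to derive Corollary~\ref{ana-final-bu} directly from Proposition~\ref{lem} via the relation \eqref{key-formula}, so no new combinatorics is needed—only a careful bookkeeping of $q$-binomial identities. First I would rewrite \eqref{key-formula} in the form
\begin{align*}
{P}_N(i,j,m,q)=[s^it^j]\Phi_{N,m+i+j}(qs,q/s,qt,q/t)-[s^it^j]\Phi_{N-1,m+i+j}(qs,q/s,qt,q/t),
\end{align*}
and then specialize to the three cases in the statement. For ${P}_{2N}(0,j,m,q)$ I set $\nu=0$, and note that the number-of-parts bound $m+i+j=m+j$ must be split into $2M+\mu$ according to the parity of $m+j$, i.e.\ $M=\lfloor(m+j)/2\rfloor$ and $\mu=(m+j)\bmod 2$; the same split applies to the $\Phi_{2N-1,\cdot}$ term after writing $2N-1=2(N-1)+1$. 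Plugging these into \eqref{lem1} gives two terms, one from each $\Phi$, and the difference should collapse by a Pascal-type recurrence for the $q$-binomial ${2M+N+\mu-j\brack N-j}_{q^2}$ to the single clean product claimed. The cases ${P}_{2N}(i,0,m,q)$ and ${P}_{2N+1}(i,0,m,q)$ are handled the same way using \eqref{lem2} in place of \eqref{lem1}: for the first we take $\nu=0$ again so the $\nu$-term in \eqref{lem2} vanishes, and for the second we take $\nu=1$, writing $2N+1=2N+1$ and $(2N+1)-1=2N=2N+0$, so that the two $\Phi$'s contribute respectively the $\nu=1$ and $\nu=0$ forms of \eqref{lem2}; the $i\ge 1$ restriction is exactly what keeps the ${M+\mu-\nu\brack i-1}_{q^4}$ coefficients meaningful.

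The concrete computation for the first formula goes as follows. With $\nu=0$ and $N$ replaced by $N$ and $N-1$ in turn, \eqref{lem1} yields
\begin{align*}
{P}_{2N}(0,j,m,q)=q^{j(2j+1)}{M\brack j}_{q^4}\left({2M+\mu+N-j\brack N-j}_{q^2}-{2M+\mu+N-j-1\brack N-j-1}_{q^2}\right),
\end{align*}
where $M=\lfloor(m+j)/2\rfloor$, $\mu=(m+j)\bmod 2$, so that $2M+\mu=m+j$. The difference of two $q^2$-binomials with the same top-minus-bottom shift is, by \cite[(3.3.3)]{andtp}, equal to $q^{2(N-j)}{m+N-1\brack N-j}_{q^2}$, and combining $q^{j(2j+1)}q^{2(N-j)}=q^{2N+j(2j-1)}$ gives the stated exponent. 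Note $q^{2N}$ appears because of the ``largest part exactly $N$'' bookkeeping, which is consistent with the $N-1$ vs $N$ subtraction. I would also double-check the edge case $j=0$ and $m$ small to make sure the ceiling/floor conventions match.

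For the two formulas involving $i$ I expect the main obstacle to be the $\nu$-bookkeeping in the ${P}_{2N+1}$ case: here the largest-part-$N$ polynomial is $\Phi_{2N+1,\cdot}-\Phi_{2N,\cdot}$, i.e.\ a genuine difference of a $\nu=1$ expression and a $\nu=0$ expression, each of which is itself a two-term sum in \eqref{lem2}. One must see that three of the four resulting terms cancel or combine, leaving only the piece with ${\lceil(m+i)/2\rceil-1\brack i-1}_{q^4}$ and the quadratic exponent $i(2i-3)+2$ shifted by $2N$. The cancellation should again be a Pascal recurrence in the $q^2$-binomial together with the fact that $q^{i+2(i-1)(i-2)}$ (the $\nu=1$, shifted term) and $q^{i+2(i+\nu-1)(i+\nu-2)}|_{\nu=0}=q^{i+2(i-1)(i-2)}$ match up, so the $\Phi_{2N}$-contribution exactly kills the unwanted $\Phi_{2N+1}$-contribution. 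Everything else is routine $q$-series manipulation with \cite[(3.3.3)-(3.3.4)]{andtp}, so the proof amounts to: substitute, split by parity, apply Pascal, simplify—which is why in the paper this is phrased as an immediate corollary. I would present it with the first case worked in full and the other two indicated as ``entirely analogous.''
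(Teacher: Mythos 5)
Your proposal follows exactly the paper's (very terse) proof: substitute into \eqref{key-formula}, evaluate the coefficients via Proposition~\ref{lem}, and collapse the difference with the $q$-Pascal relations; your fully worked first case is correct, including the exponent bookkeeping $q^{j(2j+1)}q^{2(N-j)}=q^{2N+j(2j-1)}$. One small correction before you write it up: in the case ${P}_{2N}(i,0,m,q)$ the subtracted term $\Phi_{2N-1,m+i}$ must be read as $\Phi_{2(N-1)+1,m+i}$, i.e.\ with $\nu=1$ and $N\mapsto N-1$ in \eqref{lem2}, so its $\nu$-term does \emph{not} vanish there either — the three resulting terms then combine by two Pascal steps (one in base $q^4$, one in base $q^2$) exactly as in your ${P}_{2N+1}$ analysis.
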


\begin{remark} The above result is comparable to 
Berkovich-Uncu's result for the coefficient of $s^i t^j$ where  $i=0$ or $j=0$  in
$$
\Psi_{2N+\nu,m+i+j}(qs,q/s,qt,q/t)-\Psi_{2N+\nu,m+i+j-1}(qs,q/s,qt,q/t);
$$
 see \cite[Proposition~7.4 ]{bu2}, where it has been stated without proof. It should be possible to prove their result similarly as Corallary~\ref{ana-final-bu} by applying Proposition~\ref{lem} and \eqref{phipsi}, we leave it to the motivated readers. Instead we supply a bijective proof below for completeness.
\end{remark}

\begin{proposition}[Berkovich-Uncu\cite{bu2}]\label{final-bu}
Let $\tilde{P}_N(i,j,m,q)$ be the generating function for the number of partitions into distinct parts $\leq N$ with $i$ odd-indexed, $j$ even-indexed odd parts and $m$ even parts. Then we have, for $\nu\in\{0,1\}$,
\begin{align}\label{final-bu-id}
\tilde{P}_{2N+\nu}(0,j,m,q)&=q^{j(j+1)+m(m+1)-j(-1)^{m+j}}{\lfloor \frac{m+j}{2}\rfloor\brack j}_{q^4}{N+j\brack j+m}_{q^2},\\
\nonumber
\tilde{P}_{2N+1}(i,0,m,q)&=q^{i(i+1)+m(m+1)+i(-1)^{m+i}}{\lceil \frac{m+i}{2}\rceil\brack i}_{q^4}{N+i\brack i+m}_{q^2},\\
\nonumber
\tilde{P}_{2N}(i,0,m,q)&=q^{i(i+1)+m(m+1)+i(-1)^{m+i}}{\lceil \frac{m+i}{2}\rceil\brack i}_{q^4}{N+i-1\brack i+m}_{q^2}\\
\nonumber
&\phantom{qqq}+q^{i(i+1)+m(m-1)+i(-1)^{m+i}+2N}{\lfloor \frac{m+i-1}{2}\rfloor\brack i}_{q^4}{N+i-1\brack i+m-1}_{q^2}.
\end{align}
\end{proposition}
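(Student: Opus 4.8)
The plan is to prove the three identities by an explicit bijection; I would carry out the case $i=0$ in full and then indicate the parallel modifications for the two cases $j=0$. First I would fix the combinatorial model: a partition counted by $\tilde P_{2N+\nu}(0,j,m,q)$ is a strictly decreasing sequence $\pi_1>\pi_2>\cdots>\pi_\ell$ of positive integers $\leq 2N+\nu$ in which every odd-indexed entry is even and exactly $j$ entries are odd (so the $j$ odd entries necessarily occupy even-indexed positions). Counting position-parities forces $\ell=m+j$, and since $\pi_1$ lies in an odd-indexed slot it is even, hence $\pi_1\leq 2N$; this already explains why the claimed formula does not involve $\nu$.

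Next I would peel off a canonical minimal partition. Among all admissible sequences there is a componentwise-smallest one, $\rho^{(j,m)}$, obtained by assigning the $j$ odd entries to the $j$ largest-indexed even-indexed slots and then filling every slot greedily from the bottom with the least admissible value. A short computation, splitting on the parity of $\ell=m+j$ (which decides whether the bottom slot carries a $1$ or a forced $2$), gives $|\rho^{(j,m)}|=j(j+1)+m(m+1)-j(-1)^{m+j}$, exactly the exponent of $q$ in the first identity. I would then show that an arbitrary admissible $\pi$ is encoded \emph{bijectively} by a pair $(\alpha,\mu)$ with $|\pi|=|\rho^{(j,m)}|+4|\alpha|+2|\mu|$: here $\alpha=(\alpha_1\geq\cdots\geq\alpha_j\geq 0)$ records the \emph{pattern} of $\pi$, namely $\alpha_r$ is the number of even-indexed slots by which the $r$-th odd entry (counted from the top) has been advanced relative to $\rho^{(j,m)}$, and one checks that $\alpha$ ranges over all partitions in a $j\times\lfloor(m-j)/2\rfloor$ box, while $\mu$ ranges over all partitions with at most $m+j$ parts each $\leq N-m$. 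The point is that advancing one odd entry past one even entry costs exactly $4$ in size, so the pattern contributes $q^{4|\alpha|}$, summing to $\qbin{\lfloor(m+j)/2\rfloor}{j}{q^4}$; and once the pattern is fixed, the minimal partition with that pattern still has largest part $2m$, so the remaining $\pi$ are obtained from it by adding the weakly decreasing even sequence $2\mu$ (this preserves all parities, keeps entries distinct because the required gaps are already present, and keeps the largest part $\leq 2m+2(N-m)=2N$), contributing $\qbin{N+j}{j+m}{q^2}$. Multiplying the three factors yields the first identity.

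For the two cases $j=0$ I would run the analogous peeling with the two position-parities interchanged; the bottom slot of the minimal partition now carries a $1$ exactly when $m+i$ is odd, which turns the correction $-j(-1)^{m+j}$ into $+i(-1)^{m+i}$. The one new feature is that, since all odd entries now sit in odd-indexed slots, the largest entry (in position $1$) may itself be odd, so the parity of the bound matters: for the bound $2N+1$ the peeling produces directly the single stated term, whereas for the bound $2N$ one separates the partitions with largest part $\leq 2N-1$ (these are counted by $\tilde P_{2N-1}(i,0,m,q)$, the first summand) from those containing $2N$, which is necessarily one of the $m$ even parts; removing that part and applying the $i=0$-type analysis to what remains (now with one fewer even part, bound $2N-1$, and the position roles swapped) gives the second summand.

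I expect the main obstacle to be the bijective content of the middle step: verifying that ``which even-indexed slots carry the odd entries'' and ``the uniform even inflation of the whole column'' are genuinely independent and jointly exhaust all admissible $\pi$ without double counting, and --- the only genuinely non-routine point --- that advancing an odd entry by one slot costs exactly $4$, so that the slot data is faithfully recorded by a partition in a $j\times\lfloor(m-j)/2\rfloor$ box with weight $q^{4|\alpha|}$; equivalently, that after subtracting $\rho^{(j,m)}$ the interleaving constraint between the odd and the even parts is exactly the conjunction of these two rectangular constraints. The remaining verifications --- the value of $|\rho^{(j,m)}|$, distinctness after inflation, and the boundary bookkeeping in the cases $j=0$ --- are routine, if a little fussy because of the various $(-1)^{\cdot}$ and $\nu$ distinctions.
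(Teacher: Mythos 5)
Your proposal is correct and is essentially the paper's own proof: your minimal partition $\rho^{(j,m)}$ is exactly the paper's $\pi_{m,j}$ (resp.\ $\pi^*_{m,j}$ for $m+j$ odd), your box partition $\alpha$ with weight $q^{4|\alpha|}$ is the paper's $\tilde{\pi}$ recording how far each odd part has been "moved up" past pairs of even parts, and your even inflation $2\mu$ is the paper's column-removal $\pi^2$ giving ${N+j\brack j+m}_{q^2}$. Your explicit treatment of the $j=0$, bound-$2N$ case (splitting on whether $2N$ occurs and swapping position parities after its removal) correctly fills in a step the paper leaves to the reader.
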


\begin{proof}
We first show the formula for $\tilde{P}_{2N+\nu}(0,j,m,q)$, and let us begin with the case when $m+j$ is even. First note that, since $i=0$, all odd parts have to be even-indexed, hence there must be as many even parts as there are odd parts, i.e., $m\geq j$. Let 
$$\pi_{m,j}:=(2m,2m-2,2m-4,\cdots,2j,2j-1,2j-2,2j-3,\cdots,2,1).$$
Clearly $\ell(\pi_{m,j})=m+j$ is even, so all its odd parts are even-indexed, hence it is a particular partition generated by $\tilde{P}_{2N+\nu}(0,j,m,q)$, and $|\pi_{m,j}|=j^2+m(m+1)$. 

Now for any partition $\pi$ generated by $\tilde{P}_{2N+\nu}(0,j,m,q)$, we decompose it uniquely as $\pi=\pi^1+\pi^2$ via the dual map of $\psi_k$ as in Theorem~\ref{decomp1}. More precisely, whenever the gap between two consecutive parts of $\pi$ is $g>2$, we remove $2\lceil\frac{g-2}{2}\rceil$ columns to reduce the gap down to being $1$ or $2$. The removed columns will assemble a partition into at most $m+j$ parts, all being even, and each $\leq 2N+\nu-2m$, we denote it as $\pi^2$, and clearly $\pi^2$ is generated by ${N+j\brack j+m}_{q^2}$. The remaining partition we denote as $\pi^1$, which is still a partition generated by $\tilde{P}_{2N+\nu}(0,j,m,q)$ since this decomposition preserves the parity of every part of $\pi$, and note that all the gaps between consecutive parts of $\pi^1$ are either $1$ or $2$. See Figure~\ref{fig:indent} for illustration, where the removed columns have been highlighted by an arrow $\downarrow$.

Next, $\pi^1$ can be uniquely decomposed into $\pi_{m,j}$ and a partition $\tilde{\pi}$ with parts all divisible by $4$, and each $\leq (m-j)/2$, $\ell(\tilde{\pi})\leq j$. Clearly $\tilde{\pi}$ is generated by ${\lfloor \frac{m+j}{2}\rfloor\brack j}_{q^4}$. A good way to understand this decomposition is to view $\pi^1$ as being built up from $\pi_{m,j}$, by ``moving up'' odd parts in $\pi_{m,j}$, and note that each time an odd part have to ``jump over'' an even number (say $2s$) of even parts, so that it is still even-indexed. Then this ``jump'' is recorded as $q^{4s}$ and contributes to $\tilde{\pi}$. See Figure~\ref{fig:jump} for illustration, where the odd parts in both $\pi^1$ and $\pi_{m,j}$ have been highlighted by an arrow $\downarrow$.

Putting together $\pi_{m,j}$, $\tilde{\pi}$ and $\pi^2$ completes the proof for $m+j$ even. And the case of $m+j$ odd can be derived similarly by noting that $\pi_{m,j}$ should now be replaced by
$$
\pi_{m,j}^{*}:=(2m,2m-2,2m-4,\cdots,2j+2,2j+1,2j,2j-1,\cdots,3,2).
$$
The proof of the other two formulae can be given similarly and thus omitted.
\end{proof}

\begin{figure}
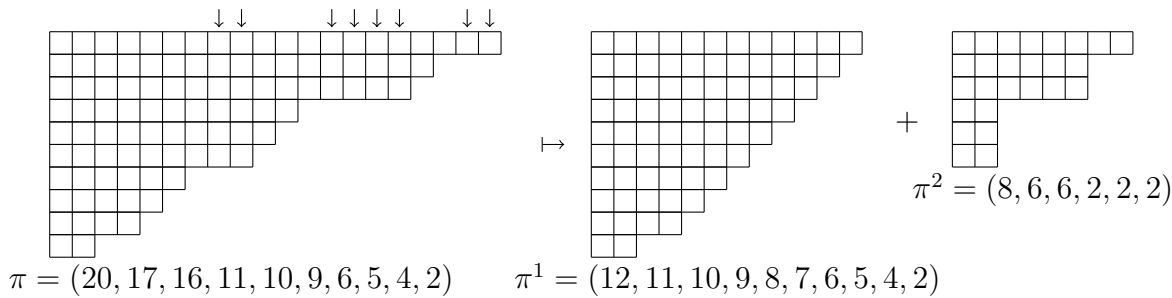

\begin{ferrers}[0.6]
 \addcellrows{20+17+16+11+10+9+6+5+4+2}   
 \highlightcolumnbyarrow{8}
 \highlightcolumnbyarrow{9}
 \highlightcolumnbyarrow{13}
 \highlightcolumnbyarrow{14}
 \highlightcolumnbyarrow{15}
 \highlightcolumnbyarrow{16}
 \highlightcolumnbyarrow{19}
 \highlightcolumnbyarrow{20}
 \transformto
 \putright
 \addcellrows{12+11+10+9+8+7+6+5+4+2}
 \putright
 \addcellrows{8+6+6+2+2+2}
 \addtext{-1}{-2}{$+$}
 \addtext{-16}{-5.5}{$\pi=(20,17,16,11,10,9,6,5,4,2)$}
 \addtext{-5}{-5.5}{$\pi^1=(12,11,10,9,8,7,6,5,4,2)$}
 \addtext{2}{-3.5}{$\pi^2=(8,6,6,2,2,2)$}
 
\end{ferrers}
\caption{$\pi=\pi^1+\pi^2$}
\label{fig:indent}
\end{figure}

\begin{figure}
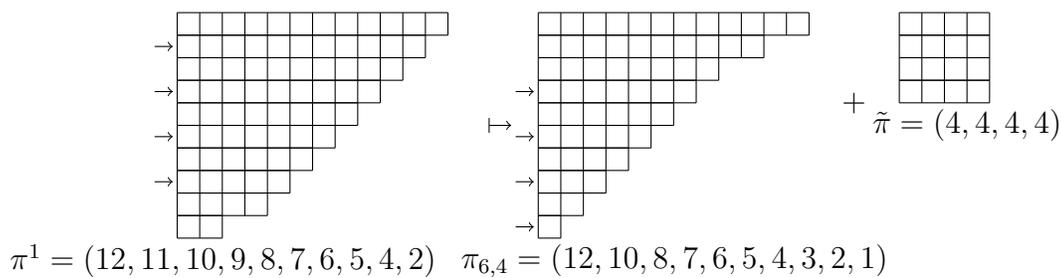

\begin{ferrers}[0.6]
 \addcellrows{12+11+10+9+8+7+6+5+4+2}
 \addtext{1}{-5.5}{$\pi^1=(12,11,10,9,8,7,6,5,4,2)$}
 \highlightrowbyarrow{2}
 \highlightrowbyarrow{4}
 \highlightrowbyarrow{6}
 \highlightrowbyarrow{8}
 \transformto
 \putright
 \addcellrows{12+10+8+7+6+5+4+3+2+1}
 \addtext{3}{-5.5}{$\pi_{6,4}=(12,10,8,7,6,5,4,3,2,1)$}
 \highlightrowbyarrow{4}
 \highlightrowbyarrow{6}
 \highlightrowbyarrow{8}
 \highlightrowbyarrow{10}
 \putright
 \addcellrows{4+4+4+4}
 \addtext{1.5}{-2.5}{$\tilde{\pi}=(4,4,4,4)$}
 \addtext{-1}{-2}{$+$}
\end{ferrers}
\caption{$\pi^1=\pi_{6,4}+\tilde{\pi}$}\label{fig:jump}
\end{figure}

\section{Final remarks }{\label{sec: modk}}

When $k=3$, Theorem~\ref{bu-k} reduces to Berkovich-Uncu's companion to 
Capparelli's identities.
So, we may  ask the  reverse question:  what are the 
 Capparelli type companions to Theorem~\ref{bu-k}? And do they possess Lie theoretical implication as the original Capparelli's identities?

Berkovich and Uncu~\cite{bu1,bu2} derived their results by first stating the explicit 
enumerative formulae of one side of their equations 
and then  prove by induction  that both sides satisfy the same recurrence 
relation.    
This  is reminiscent to the situation in \cite{iz}, where  
the difficult part is to find an explicit solution to  a finite difference equation, once 
a solution is found the proof is routine by checking the recurrence. 
In this paper  we provide a unified combinatorial  approach to
 the generating function versions  of the results in \cite{bu1,bu2} as well as 
the  bounded versions \eqref{iz1} and \eqref{iz2} of Boulet's formulae.

Lastly, for a partition theorem as Theorem~\ref{bu-k}, one naturally has a craving for purely bijective proof. We remark that our proof of \eqref{gfSk} is indeed bijective. But when we derive \eqref{gfDSk} from \eqref{gfSk}, the simple algebraic operation of cancelling the common factor $1/(w_k;w_k)_{\infty}$ will inevitably obscure the bijection. This leaves the problem of finding purely bijective proof of Theorem~\ref{bu-k} still open.

\section*{Acknowledgement} 
This work 
was supported by the LABEX MILYON (ANR-10-LABX-0070) of Universit\'e de Lyon, within the program ``Investissements d'Avenir'' (ANR-11-IDEX-0007) operated by the French National Research Agency (ANR),  and was done during the first author's visit to Universit\'e Claude Bernard Lyon 1, he would like to thank the second author and the Institut Camille Jordan for the hospitality extended during his stay. 
 
  The authors would like to thank George~E.~Andrews for his helpful comments on the manuscript. We are grateful to 
   three anonymous referees for their thorough and constructive reports that improve our manuscript to a great extent.

The first author's research was supported by the Milyon project, the Fundamental Research Funds for the Central Universities (No.~CQDXWL-2014-Z004) and the National Science Foundation of China (No.~11501061).

\small

\end{document}